\newtheorem{MLtheorem}{ML Theorem}
\newtheorem{theorem}{Theorem}[section]
\newtheorem{corollary}[theorem]{Corollary}
\newtheorem{proposition}[theorem]{Proposition}
\newtheorem{conjecture}[theorem]{Conjecture}
\newtheorem{lemma}[theorem]{Lemma}
\theoremstyle{definition}
\newtheorem{definition}[theorem]{Definition}
\newtheorem{example}[theorem]{Example}
\newtheorem{remark}[theorem]{Remark}
\newcommand{\set}[2]{\{ #1 \mid #2 \}}
\title{Reinforcement Learning the Chromatic Symmetric Function }
\author{Gergely B\'erczi \thanks{Department of Mathematics, Aarhus University, email: gergely.berczi@math.au.dk} \and Jonas Kl\"uver \thanks{Department of Mathematics, Aarhus University, email: jonas.kluever@gmail.com}}
\date{}
\begin{document}

\maketitle

\begin{abstract}
    We propose a conjectural counting formula for the coefficients of the chromatic symmetric function of unit interval graphs using reinforcement learning. The formula counts specific disjoint cycle-tuples in the graphs, referred to as Eschers, which satisfy certain concatenation conditions. These conditions are identified by a reinforcement learning model and are independent of the particular unit interval graph, resulting a universal counting expression.  

\end{abstract}

\section{Introduction}

The study of proper graph colorings is a fundamental area in graph theory and theoretical computer science. For a given graph \( G \), the number of ways to color its vertices with \( n \) colors, such that no two adjacent vertices share the same color, is given by the chromatic polynomial \( \chi_G(n) \). The chromatic polynomial exhibits several intriguing properties, including log-concavity, a celebrated result established by Huh \cite{huh} The chromatic symmetric function, a multi-variable generalization of the chromatic polynomial, was introduced by Richard Stanley \cite{stanley1995}, who formulated several deep conjectures in the 1990's regarding its algebraic properties. This symmetric function is connected to various fields, including topology, statistical mechanics, representation theory and algebraic geometry, and holds a central position in algebraic combinatorics and graph theory. 

The goal of this paper is to study the chromatic symmetric function using machine learning techniques. Motivated by the Stanley-Stembridge positivity conjecture \cite{stanleystembridge,stanley1995}, our main result is a conjectured counting formula for coefficients of the elementary symmetric function expansion of the chromatic symmetric function, suggested by deep reinforcement learning models. 

Let \( G \) be a finite graph, with \( V(G) \) representing its vertices and \( E(G) \) representing its edges.

\begin{definition}
A \textbf{proper coloring} \( c \) of \( G \) is a function \( c : V \rightarrow \mathbb{N} \), where no two adjacent vertices share the same color.
\end{definition}

For a given coloring \( c \), we can associate a monomial:
\[
x_c = \prod_{v \in V} x_{c(v)},
\]
where \( x_1, x_2, \dots \) are commuting variables. Let \( \Pi(G) \) denote the set of all proper colorings of \( G \), and let $\Lambda \subset \mathbf{Z}[x_1,x_2,\ldots] $ denote the ring of symmetric functions in the infinite set of variables \( x_1, x_2, \dots \).

\begin{definition}
The \textbf{chromatic symmetric function} \( X_G \in \Lambda \) of a graph \( G \) is defined as the sum of the monomials \( x_c \) over all proper colorings \( c \) of \( G \):
\[
X_G = \sum_{c \in \Pi(G)} x_c.
\]
\end{definition}

\begin{remark}
One could consider a finite but sufficiently large number of colors \( r \), which would lead to \( X_G \in \Lambda_r \), the set of symmetric polynomials in \( r \) variables. Although this slightly complicates notation, it does not alter the results.
\end{remark}

\begin{definition}\label{def:em}
The \( m \)-th \textbf{elementary symmetric function} \( e_m \) is defined as:
\[
e_m = \sum_{i_1 < i_2 < \dots < i_m} x_{i_1} x_{i_2} \dots x_{i_m},
\]
where \( i_1, \dots, i_m \in \mathbb{N} \). Given a partition \( \lambda = (\lambda_1 \geq \lambda_2 \geq \dots \geq \lambda_k) \), we define the elementary symmetric function \( e_\lambda \) as \( e_\lambda = \prod_{i=1}^k e_{\lambda_i} \). These functions form a basis of \( \Lambda \).
\end{definition}

\begin{definition}
A symmetric function \( X \in \Lambda \) is \textbf{e-positive} if it can be written as a non-negative linear combination of elementary symmetric functions.
\end{definition}

For example, the chromatic symmetric function of \( K_n \), the complete graph on \( n \) vertices, is \( X_{K_n} = n! e_n \), which is e-positive.

\begin{definition}
The \textbf{incomparability graph} \( \text{inc}(P) \) of a poset \( P \) is the graph whose vertices are the elements of \( P \), and two vertices are adjacent if they are incomparable in \( P \).
\end{definition}

\begin{definition}
A poset \( P \) is said to be \( (a+b) \)-free if it does not contain a chain of length \( a \) and a chain of length \( b \) that are mutually incomparable.
\end{definition}

\begin{definition}
A \textbf{unit interval order} (UIO) is a poset that is isomorphic to a finite subset of \( U \subset \mathbb{R} \), where \( u \) and \( w \in U \) are incomparable if and only if \( |u - w| < 1 \).
\end{definition}

\begin{theorem}[Scott-Suppes \cite{scott}]
A finite poset \( P \) is a UIO if and only if it is \( 2+2 \)-free and \( 3+1 \)-free.
\end{theorem}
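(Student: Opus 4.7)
The plan is to prove both implications separately, with the forward implication being a direct computation and the reverse direction requiring an explicit construction of a unit interval representation.

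For the forward direction, assume $P$ is a UIO, so we may identify $P$ with a finite subset $U \subset \mathbb{R}$ where $u < w$ means $w - u \geq 1$ and $u \parallel w$ means $|u-w| < 1$. To rule out a $2+2$, suppose $a<b$, $c<d$ with all cross-pairs incomparable; after relabeling assume $a \leq c$ as reals. Then $c-a<1$ forces $c<a+1\leq b$, so $b>c$, and then $|b-c|<1$ gives $b<c+1\leq d$; iterating with $|a-d|<1$ one reaches $d<a+1\leq c+1\leq d$, a contradiction. To rule out a $3+1$, if $a<b<c$ and $d$ is incomparable to all of them, then $c \geq a+2$, but $|d-a|<1$ and $|d-c|<1$ would force $d \in (a-1,a+1) \cap (c-1,c+1)$, which is empty since $c-1 \geq a+1$.

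For the reverse direction, assume $P$ is both $(2+2)$-free and $(3+1)$-free, and the goal is to assign to each $x \in P$ a real number $f(x)$ such that $x<_P y$ iff $f(y)-f(x) \geq 1$. The starting observation is the well-known fact that in a $(2+2)$-free poset the collection of strict downsets $D(x) = \{y : y <_P x\}$ is totally ordered by inclusion, and likewise for the strict upsets $U(x)$. This yields a canonical ranking: partition $P$ into levels $L_0, L_1, \ldots, L_k$ by iteratively stripping off the elements with $D(x)$-minimal, and call this the $(2+2)$-rank $r(x)$. Elements within the same level are pairwise incomparable in $P$, while $x <_P y$ implies $r(x) < r(y)$, giving a first approximation $f_0(x) = r(x)$.

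The remaining task, and the main obstacle, is that $f_0$ need not be an embedding: one can have $r(y) - r(x) \geq 1$ without $x <_P y$ (they might merely lie on different levels while being incomparable), and conversely one wants the gap for comparable elements to be exactly controlled. The $(3+1)$-free hypothesis is precisely what prevents levels from being "too spread out": it ensures that if $x \parallel y$, then $|r(x) - r(y)|$ is bounded, which allows a small perturbation of $f_0$ within each level to correct the representation. Concretely, I would refine $f$ by choosing within each level $L_i$ a sub-ordering (using the induced inclusions among the $U(x)$) and setting $f(x) = r(x) + \varepsilon \cdot s(x)$ for a sufficiently small $\varepsilon$ and an appropriate local index $s(x) \in [0,1)$; the $(3+1)$-free condition is then invoked to verify that no incomparable pair gets pushed to distance $\geq 1$ and no comparable pair collapses below $1$. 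Verifying this compatibility, i.e.\ that the $(3+1)$-free condition exactly compensates for the discrepancy left by the level decomposition, is the technical heart of the argument.
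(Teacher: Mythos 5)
The paper does not prove this theorem; it states it as a cited result from Scott--Suppes, so there is no in-paper proof to compare against. Evaluating your proposal on its own terms: the forward direction (a unit interval representation excludes both $2+2$ and $3+1$) is a correct, self-contained computation.

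The reverse direction, however, has a genuine gap. You reduce the problem to constructing $f(x) = r(x) + \varepsilon\, s(x)$ and then write that verifying the compatibility ``is the technical heart of the argument,'' which is precisely the part that is not done. Two specific things are missing. First, for the $\varepsilon$-perturbation to have any chance of working you need the lemma that in a $(2+2)$- and $(3+1)$-free poset, incomparable elements have rank difference at most $1$; you assert that $(3+1)$-freeness ``ensures $|r(x)-r(y)|$ is bounded'' but neither state the precise bound nor prove it. (It is true with the height function as rank: if $x \parallel y$ and $\mathrm{ht}(y) \geq \mathrm{ht}(x)+2$, take a chain $a_0 < \cdots < a_{\mathrm{ht}(x)+1}$ below $y$; the top two elements of this chain are forced incomparable to $x$, producing the $3+1$ configuration $a_{\mathrm{ht}(x)} < a_{\mathrm{ht}(x)+1} < y$ against $x$. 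But this argument needs to appear.) Second, even granting that lemma, the existence of a single $s : P \to [0,1)$ satisfying simultaneously $s(y) \geq s(x)$ whenever $x <_P y$ with $r(y)=r(x)+1$, and $s(y) < s(x)$ whenever $x \parallel y$ with $r(y)=r(x)+1$, is a nontrivial consistency condition across many overlapping constraints; you gesture at ``the induced inclusions among the $U(x)$'' as the source of $s$, but you do not define $s$ or check that these constraints are jointly satisfiable. Without that, the construction is only a heuristic and the reverse implication remains unproved.
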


Stanley and Stembridge \cite{stanleystembridge,stanley1995} proposed the following positivity conjecture. 

\begin{conjecture}[Stanley-Stembridge \cite{stanleystembridge}]
If \( P \) is a \( 3+1 \)-free poset, then \( X_{\text{inc}(P)} \) is e-positive.
\end{conjecture}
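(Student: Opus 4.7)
The plan is to attack the conjecture through a combinatorial positivity argument, building on the reduction machinery already in the literature and culminating in the explicit counting formula that the reinforcement learning model suggests. The overall strategy has three phases: reduce from general $(3+1)$-free posets to UIOs, convert the $e$-coefficient problem into an enumeration problem on the UIO, and finally exhibit a non-negative combinatorial interpretation of that enumeration.

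First, I would reduce to the UIO case. By a theorem of Guay-Paquet, the chromatic symmetric function $X_{\mathrm{inc}(P)}$ of any $(3+1)$-free poset $P$ can be written as a convex combination with positive rational coefficients of the chromatic symmetric functions $X_{\mathrm{inc}(Q)}$ where $Q$ ranges over posets that are both $(3+1)$-free and $(2+2)$-free, i.e.\ UIOs by Scott--Suppes. Because $e$-positivity is preserved under non-negative linear combinations, it suffices to establish the conjecture for $P$ a UIO. This step is not the main obstacle, and it isolates the combinatorially tractable subclass on which the Eschers in the abstract are defined.

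Next, for a fixed UIO $P$ of size $n$ and a partition $\lambda \vdash n$, I would aim to write
\[
  X_{\mathrm{inc}(P)} \;=\; \sum_{\lambda \vdash n} c_\lambda(P)\, e_\lambda,
\]
and identify $c_\lambda(P)$ with the number of Eschers of shape $\lambda$ in $\mathrm{inc}(P)$ satisfying the concatenation conditions discovered by the RL model. To establish this, I would combine two tools. On one side, I would use the Shareshian--Wachs refinement to chromatic quasi-symmetric functions and its identification, via Brosnan--Chow, with the dot-action representation on the cohomology of the regular semisimple Hessenberg variety $H(P)$; this gives a representation-theoretic handle on the $e$-coefficients as dimensions of certain graded pieces. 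On the other side, I would set up a generating function identity: sum over disjoint cycle-tuples (Eschers) in $\mathrm{inc}(P)$ weighted by the concatenation indicator, and show using inclusion--exclusion on proper colorings in the power-sum basis that this sum equals the $e_\lambda$-coefficient. A transfer-matrix or recursive argument along the linear order of the UIO should let me reduce to checking a local gluing identity, one vertex at a time.

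The main obstacle will be the third step: proving that the RL-discovered concatenation conditions exactly characterize which cycle-tuples contribute. Since the formula is conjectural and empirically derived, the hard part is not merely verifying non-negativity but matching the enumeration against an independent expression for $c_\lambda(P)$. I would attempt this by induction on $n$, peeling off the maximal element of the UIO and splitting Eschers by whether the removed vertex lies in a length-one cycle, is absorbed into a longer cycle, or forces a concatenation; if the induced recursion on the Escher count agrees with the known recursion for $X_{\mathrm{inc}(P)}$ (for instance the modular law of Abreu--Nigro), positivity and the formula follow simultaneously. Alternative routes, should the direct bijection resist, include comparing with Gasharov's $P$-tableaux expansion in the Schur basis and dualizing, or interpreting Eschers as fixed-point data of a torus action on $H(P)$ to match Tymoczko's dot-action characters. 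Any of these is a substantial undertaking, and I expect the concatenation-condition matching to be where the real combinatorial difficulty concentrates.
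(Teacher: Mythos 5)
The paper does not prove this statement; it is stated as a conjecture, and the authors explicitly note that they learned of Hikita's independent proof only shortly before submitting their preprint. There is no ``paper's own proof'' to compare against, so your proposal must stand on its own as a proof attempt --- and it does not.

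Your first two phases faithfully reproduce reductions the paper already records: Guay-Paquet plus Scott--Suppes to pass to UIOs, and Stanley's $G$-homomorphism with the power-sum expansion to express $c_\lambda^U$ through Escher-tuple counts (Theorem~\ref{thm:coefformula}). But those steps only reformulate positivity; the resulting expression is a signed alternating sum of Escher counts, not a manifestly non-negative quantity. Your third phase, where positivity would actually be proved, is a menu of candidate strategies (vertex-peeling induction, the Abreu--Nigro modular law, Gasharov $P$-tableaux, Tymoczko's dot action on Hessenberg varieties), none of which you carry out and none of which the paper claims to execute. More fundamentally, the paper's own data (Tables~\ref{tab:pairs}--\ref{tab:quadruple2}) show that the RL-discovered condition graphs do \emph{not} exactly equal $c_\lambda^U$ for all UIOs and partitions: there is a $1/429$ mismatch already at $\lambda=(5,3)$ and systematic over-counting at $\lambda=(3,2,1)$ and other unbalanced partitions. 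So the formula as stated is not an identity and cannot be proved as one; any rigorous argument would first have to repair the conditions or explain the exceptional UIOs, and you propose no mechanism for either. You yourself identify the third step as ``the main obstacle,'' but that is precisely the step a proof must supply. As written, this is a research outline with an acknowledged open gap at its center, not a proof.
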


The Stanley-Stembridge suggests that for the incomparability graph of any \( 3+1 \)-free poset \( P \), the coefficients \( c_\lambda \) in the expansion
\[
X_G = \sum_{\lambda} c_\lambda e_\lambda,
\]
are non-negative for all partitions \( \lambda \).
It has been computationally verified for posets with up to 20 elements \cite{guay}. In \cite{shareshian} Sharesahian and Wachs introduced a refined version of the chromatic symmetric functions, and conjectured that these chromatic quasi-symmetric functions are also $e$-positive. 

In 2013, Guay-Paquet \cite{guay} showed that it is enough to prove the Stanley-Stembridge conjecture for the case where the poset is both \( 3+1 \)-free and \( 2+2 \)-free, i.e., for unit interval orders. 

\begin{theorem}[Guay-Paquet \cite{guay}]
\label{thm:guay-paquet}
Let \( P \) be a \( 3+1 \)-free poset. Then, \( X_{\text{inc}(P)} \) is a convex combination of the chromatic symmetric functions
\[
\{ X_{\text{inc}(P')} \mid P' \text{ is both } 3+1 \text{-free and } 2+2 \text{-free} \}.
\]
\end{theorem}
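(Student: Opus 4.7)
The plan is to induct on the number $N(P)$ of four-element sub-posets of $P$ isomorphic to $2+2$. If $N(P) = 0$ then $P$ is both $3+1$-free and $2+2$-free (a UIO by Scott-Suppes), and the statement holds trivially with $P$ itself as the unique term. For the inductive step, assume $N(P) \geq 1$ and that the theorem is already known for every $3+1$-free poset $Q$ with $N(Q) < N(P)$.

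\textbf{Modular resolution of a $2+2$.} Pick a $2+2$ witnessed by elements $a<b$ and $c<d$ with all four cross-pairs incomparable, chosen so that the chains $[a,b]$ and $[c,d]$ are minimal in a suitable sense (e.g.\ no proper sub-chain of $P$ inside them also participates in a $2+2$). Form two modifications of $P$ by adding a single new comparability across the obstruction and taking transitive closure, for instance $P_1 = \overline{P \cup \{a<d\}}$ and $P_2 = \overline{P \cup \{c<b\}}$. The heart of the argument is to establish a \emph{modular relation}
\[
X_{\operatorname{inc}(P)} \;=\; \alpha\, X_{\operatorname{inc}(P_1)} \;+\; (1-\alpha)\, X_{\operatorname{inc}(P_2)}
\]
for some $\alpha \in [0,1]$ depending only on the local configuration. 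Given this identity, and given that $P_1$ and $P_2$ are themselves $3+1$-free with strictly smaller $N(\cdot)$, the induction hypothesis expresses each $X_{\operatorname{inc}(P_i)}$ as a convex combination of UIO chromatic symmetric functions, and taking the convex combination in the displayed equation finishes the proof.

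\textbf{Proving the modular relation.} I would establish the identity either (i) combinatorially, by constructing a weight-preserving near-bijection between proper colorings of $\operatorname{inc}(P)$ and suitable pairs of colorings of $\operatorname{inc}(P_1)$ and $\operatorname{inc}(P_2)$, separating colorings according to how the four distinguished vertices $a,b,c,d$ are colored, or (ii) algebraically, working in the Hopf algebra of $3+1$-free posets where the local move corresponds to a linear syzygy among generators, which is the viewpoint underlying Guay-Paquet's original argument. In either case the identity is purely local at the four vertices, so the coefficient $\alpha$ should be independent of the ambient structure of $P$ away from the chosen $2+2$.

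\textbf{Main obstacle.} The central difficulty lies in the simultaneous verification of three requirements for the resolution: (a) that both $P_1$ and $P_2$ remain $3+1$-free after adding the new relation, (b) that $N(P_1), N(P_2) < N(P)$, and (c) that an \emph{exact} linear identity — not merely an inequality or an expression modulo lower-order terms — holds with a single convex coefficient $\alpha$. Points (a) and (b) demand a careful extremal choice of the witness $2+2$, since a poorly chosen modification can create a new $3+1$ (e.g.\ if some element outside $\{a,b,c,d\}$ sits above/below the altered chain in an incompatible way); this is where minimality of $[a,b]$ and $[c,d]$ enters. Point (c) is the genuine combinatorial content and is where I expect to spend the bulk of the work — isolating the proper-coloring contributions at the four distinguished vertices and matching them term by term across the three graphs $\operatorname{inc}(P), \operatorname{inc}(P_1), \operatorname{inc}(P_2)$.
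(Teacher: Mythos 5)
The paper does not actually prove this theorem; it is stated with a citation to Guay-Paquet \cite{guay} and no proof is given. So your proposal must be judged on its own against what Guay-Paquet's argument actually does.

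Your high-level shape — locate a local ``modular relation'' expressing $X_{\mathrm{inc}(P)}$ as a convex combination of the same for posets with fewer obstructions, then induct — is indeed the right flavor. But the specific resolution step you propose is false, and the failure occurs already at the smallest possible example. Take $P = 2+2$ itself on $\{a,b,c,d\}$ with $a<b$, $c<d$, and all cross-pairs incomparable. Then $\mathrm{inc}(P)=K_{2,2}\cong C_4$. Your $P_1$ (add $a<d$) and your $P_2$ (add $c<b$) both have incomparability graph equal to the path $P_4$. Your modular identity would therefore force
\[
X_{C_4} \;=\; \alpha X_{P_4} + (1-\alpha) X_{P_4} \;=\; X_{P_4},
\]
which is false: their chromatic polynomials already disagree at three colors ($18$ for $C_4$ versus $24$ for $P_4$). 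So no choice of $\alpha$ salvages the identity, and the induction never gets off the ground.

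The actual modular relation in Guay-Paquet is formulated on a \emph{triple} $(a,b,c)$ with $a<b$ and $c$ incomparable to both, the two modifications being $a<c$ and $c<b$ (one of which is \emph{not} a cross-dominance relation of the kind you chose), with the coefficient pinned to exactly $1/2$ rather than a free local parameter $\alpha$; crucially the law only applies under additional local hypotheses guaranteeing that both modifications stay $(3+1)$-free, and the existence of a usable triple whenever a $2+2$ is present is itself a lemma. Moreover, the well-foundedness of the recursion is not a naive count of $2+2$'s — taking transitive closure after adding a relation can create new $2+2$'s — so Guay-Paquet works with a different termination measure. You correctly flag all three pressure points (preserving $(3+1)$-freeness, a decreasing potential, and an \emph{exact} identity), which shows sound instincts, but the concrete resolution you supply fails the first sanity check, so the proof as sketched does not go through.
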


%There are several known connections between the Stanley-Stembridge conjecture and geometry. For instance, Shareshian and Wachs \cite{ref30} conjectured that the symmetric functions involved are essentially the same as the Frobenius characters of symmetric group actions on the cohomology rings of Hessenberg varieties, as studied by Tymoczko \cite{ref36}. This conjecture was subsequently proven by Brosnan and Chow \cite{ref8}, and independently by Guay-Paquet \cite{ref21}. Additionally, Haiman’s conjecture \cite[Conjecture 2.1]{ref23} related to Kazhdan-Lusztig theory suggests that the Stanley-Stembridge conjecture would follow from his assertions about the characters of Kazhdan-Lusztig basis elements. 

Shortly before submitting our preprint, we became aware of Hikita's work \cite{hikita2024proof}, which presents a probabilistic interpretation of the Stanley coefficients and establishes the e-positivity conjecture. In contrast, our results offer a fundamentally different interpretation of these coefficients, discovered through the application of machine learning techniques.

\section*{Acknowledgments}

We thank Andras Szenes for introducing us to this problem. We are greatly indebted to Ádám Zsolt Wagner, whose expertise in machine learning profoundly shaped this paper. The first author was supported by DFF 40296 grant of the Danish Independent Research Fund.

\section{The main results}\label{sec:mainresults}

In this paper, instead of using supervised learning models to study the Stanley coefficients directly, we follow a more sophisticated learning strategy, which was motivated by recent work of Szenes and Rok \cite{szenesrok}. This work suggests that the Stanley coefficients should count specific cycle-tuples within the UIO. This method offers two key advantages: 
\begin{enumerate}
    \item It provides an exact counting interpretation of the coefficients. 
    \item It yields a universal formula, as the cycles we count depend only on the partition \(\lambda\), rather than on the specific UIO.
\end{enumerate}

The strategy in a nutschell is the following: given a partition \(\lambda = (\lambda_1, \dots, \lambda_s)\) the Stanley coefficient \(c_\lambda\) is given by the count of $\lambda$-Eschers in the UIO satisfying certain splitting and insertion conditions. We use a reinforcement learning agent to identify the correct conditions.
Hence the central objects are Eschers, which are cycles of a certain type in the UIO graph. Given two unit intervals $u_1$ and $u_2$ on the real line, their relative positions can fall into one of three categories:
\begin{enumerate}
    \item $u_1$ intersects $u_2$,
    \item $u_1 \cap u_2 = \emptyset$ and $u_1$ is to the left of $u_2$,
    \item $u_1 \cap u_2 = \emptyset$ and $u_1$ is to the right of $u_2$.
\end{enumerate}
Following \cite{szenesrok}, we will use the notation $u_1 \prec u_2$ for case (2), $u_1 \succ u_2$ for case (3), and $u_1 \rightarrow u_2$ if either case (1) or case (2) holds.

\begin{definition} Let $U$ be a unit interval order with intervals $u_0,\ldots, u_{n-1}$. The \textbf{area sequence of U} is the unique increasing sequence $a_U=(a_0\le \ldots \le a_{n-1})$ where 
\begin{enumerate}
    \item $0 \leq a_i \leq i$ is integer
    \item $u_i \sim u_j$ if and only if $i \geq a_j$.
\end{enumerate}
\end{definition}

% \begin{example}\thlabel{example1}
% The area sequence $[0, 0, 0, 0, 2, 4, 4]$ corresponds to the folowing unit interval order: 
% $\\ \\ $
% \begin{tikzpicture}
% \draw (0,3) node[left] {0} --  (2,3);
% \draw (0.5,2.5) node[left] {1} -- (2.5,2.5);
% \draw (1,2) node[left] {2} -- (3,2);
% \draw (1.5,1.5) node[left] {3} -- (3.5,1.5);
% \draw (2.75,1) node[left] {4} -- (4.75,1);
% \draw (4, 0.5) node[left] {5} -- (6,0.5);
% \draw (4.5,0) node[left] {6} -- (6.5,0);
% %\filldraw [gray] (0,0) circle (2pt);
% \end{tikzpicture}

% \end{example}

\begin{remark}
The number of UIOs of length $n$ is equal to the number of increasing integer sequences satisfying condition (1), which is the Catalan number $C_n = \frac{1}{n + 1} \begin{pmatrix}
2n \\
n
\end{pmatrix}$, and the number coefficients of the chromatic symmetric function $X^U$ is the number of partitions of $n$, which we denote by $\mathcal{P}_n$. Here's a table for small number of vertices:
\begin{center}
\begin{tabular}{ |p{3cm}|p{3cm}|p{3cm}|  }
 \hline
 \# Vertices & \# UIOs, $C_n$ & \# Coeffs, $|\mathcal{P}_n|$\\
 \hline
2 & 2 & 2\\
3 & 5 & 3\\
4 & 14 & 5\\
5 & 42 & 7\\
6 & 132 & 11\\
7 & 429 & 15\\
8 & 1430 & 22\\
9 & 4862 & 30\\
20 & 6564120420 & 627\\
 \hline
\end{tabular}
\end{center}
For small UIOs it is feasible to compute all coefficients and verify their non-negativity. Computation of the Stanley coefficients is based on a combinatorial formula of Theorem \ref{thm:coefformula}, using the Stanley G-homomorphism. However, as the table suggests, the complexity grows rapidly. Calculating these coefficients becomes both memory- and time-intensive. The conjecture has been verified for UIOs of up to length 20 \cite{guay}.

\end{remark}

\begin{definition} We call a sequence $[v_0, v_1, \dots, v_{k-1}]$ of distinct elements from $U$ a \textbf{$k$-Escher} if the relation
\[
v_0 \rightarrow v_1 \rightarrow \dots \rightarrow v_{k-1} \rightarrow v_0
\]
holds. We will denote by $E_k^U$ the set of $k$-Eschers in $U$. For a pair of integers $k,l$ we denote by $E_{k,l}^U$ the set of disjoint \textbf{$(k,l)$ Escher pairs:}
\[E_{k,l}=\{(v,w) \in E_k^U \times E_l^U: v \cap w  =\emptyset\}\]
We adapt the same notation for longer \textbf{Escher-tuples}. 
\end{definition}
The operation of cyclic permutation on sequences is denoted by $\zeta$:
\[
\zeta : [v_0, v_1, \dots, v_{k-1}] \mapsto [v_1, v_2, \dots, v_{k-1},v_0].
\]
By applying powers of $\zeta$ to a $k$-Escher, we generate $k - 1$ new $k$-Eschers. Since all the resulting sequences are equivalent, we refer to this group of sequences as a "cyclic Escher" to represent the isomorphism class of sequences related by $\zeta$. An Escher, then, is a cyclic Escher with a chosen starting point. For a given escher $u$ we let $u^{v}$ denoted the escher equivalent to $u$, starting at $v \in u$.  Additionally, for a $k$-Escher, we treat the index set as integers modulo $k$, so that, for instance, $u_0 = u_k$.

Our starting point is the following observation of Szenes-Rok \cite{szenesrok}, following Stanley \cite{stanley1995}. The general formula using $G$ homomorphism will be given in the next section.

\begin{proposition}[Stanley, Szenes-Rok]\label{stanleyprop}
Let $U$ be a UIO of length $n \ge k+l$. The Stanley coefficient corresponding to $\lambda=(k,l)$ is $c_{(k,l)}^U=\# E_{k,l}^U - \# E_{k+l}^U$.
\end{proposition}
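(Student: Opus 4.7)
The proof plan is to derive the formula from the general $G$-homomorphism expression for Stanley coefficients, which will be established in the next section as Theorem \ref{thm:coefformula}, specialized to the two-part partition $\lambda=(k,l)$. The starting observation is that Stanley's $G$-homomorphism gives $c_\lambda^U$ as a signed sum, over partitions $\pi$ refining $\lambda$, of counts of vertex-disjoint cyclic structures in $U$ whose sizes form $\pi$. In the UIO setting, the directed arrow $v \to w$ (intersection or $v \prec w$) is precisely the adjacency relation that makes these cyclic structures into Eschers.

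First I would record the general formula and isolate its content for partitions with at most two parts. For $\lambda = (k,l)$ the only partitions that refine $(k,l)$ and appear in the alternating sum are $(k,l)$ itself and the one-block partition $(k+l)$. The term indexed by $(k,l)$ contributes the count of ordered pairs consisting of a $k$-cycle and an $l$-cycle on disjoint vertex sets, which by definition is $\#E_{k,l}^U$. The term indexed by $(k+l)$ picks up a single sign $-1$ coming from the Möbius function of the refinement poset (equivalently, from the power-sum-to-elementary transition), contributing $-\#E_{k+l}^U$. Summing the two terms yields the claimed identity.

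Next I would verify the two combinatorial bijections implicit above: (a) the cyclic structures appearing in the $G$-homomorphism expansion applied to $U$ are exactly the Eschers of $U$ (this should follow from the fact that in a UIO the adjacency geometry encoded by $\to$ is the natural orientation used in Szenes--Rok's formula), and (b) the automatic disjointness of vertex sets in the $G$-homomorphism output matches the disjointness condition $v \cap w = \emptyset$ built into the definition of $E_{k,l}^U$.

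The main obstacle will be sign bookkeeping and ensuring no further terms contribute. Concretely, one must check that no intermediate partitions (for instance, refinements corresponding to power-sum cross terms with cycles of intermediate sizes) survive once the formula is transported from the power-sum basis to the elementary basis, and that the coefficient attached to the $(k+l)$-term is exactly $-1$ rather than some combinatorial multiple. Once this is settled, the proposition reduces to the two explicit counts $\#E_{k,l}^U$ and $\#E_{k+l}^U$, and the minus sign emerges naturally from the single Möbius value $\mu(\hat 0, \hat 1) = -1$ on the two-element refinement interval between $(k,l)$ and $(k+l)$.
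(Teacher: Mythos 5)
Your plan is essentially the paper's own derivation (carried out in \S 3.3 as Theorem~\ref{thm:pairformula}): apply the $G$-homomorphism to the identity $m_{(k,l)} = p_k p_l - p_{k+l}$, and use the Szenes--Rok result that $[p_m^U, v_1\cdots v_n]$ equals the Escher count $\#E_m^U$ (via the intermediate notion of correct sequences, Theorems~\ref{paunov} and~\ref{thm:szenesrok}), so that extracting the squarefree coefficient from the product term yields exactly the disjoint-pair count $\#E_{k,l}^U$. Two small slips worth flagging: the relevant change of basis is monomial-to-power-sum, not ``power-sum-to-elementary,'' and $(k+l)$ is a \emph{coarsening} of $(k,l)$ rather than a refinement --- in either phrasing the interval between them in the partition lattice is the two-element chain, so your Möbius-function framing correctly produces the single coefficient $-1$.
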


\begin{definition}
Let \( U \) be a unit interval order (UIO) of length $n$, and let \( v = [v_0, \ldots, v_{k-1}]\in E_k^U \) be a $k$-Escher. Fix a positive integer $l\le k$. If there exists an \( m \in \mathbb{N} \) such that
\[v_{m+1} \to v_{m+2} \to \dots \to v_{m+l} \to v_{m+1}
\]
and
\[
v_0 \to  \dots \to v_m \to v_{m+l+1} \to  \dots \to  v_{k-1}
\]
holds, then we call $[v_{m+1},\ldots, v_{m+l}]$ a \textbf{valid \( l \)-subescher} of \( v \). We call such $m$ a $\textbf{splitting point}$ or $\textbf{subescher starting point}$ of $v$. We denote by $\textbf{S}(v,l)$ the set of splitting points, and if this is nonempty then we let $\textbf{SEStart}(v,l)$ denote the first (smallest non-negative) splitting point (SEStart for Sub-Escher Start), and $\textbf{SEEnd}(v,l)=\textbf{SEStart}(v,l)+l$.
\end{definition}
In short, a valid subescher of length \( l \) is a subescher such that the remaining part of \( v \) still forms an Escher, illustrated as follows:

\begin{tikzpicture}[scale=.6,
mycirc/.style={circle,fill=blue!60, minimum size=0.5cm}]
\node[mycirc,label=above:{$w_m$}] (n1) at (3,0) {};
\node[mycirc,label=above:{$w_{m+1}$}] (n2) at (6,0) {};
\node[mycirc,label=above:{$w_{m+2}$}] (n3) at (9,0) {};
\node[mycirc,label=above:{$w_{m+l}$}] (n4) at (15,0) {};
\node[mycirc,label=above:{$w_{m+l+1}$}] (n5) at (18,0) {};

\node (c0) at (0,0) {$\hdots$};
\node (c2) at (12,0) {$\hdots$};
\node (c3) at (21,0) {$\hdots$};

\coordinate (z1) at (15, -2);
\coordinate (z2) at (6, -2);
\coordinate (z3) at (18, -3);
\coordinate (z4) at (3, -3);

\draw [->, very thick] (c0) -- (n1);
\draw [->, very thick] (n1) -- (n2);
\draw [->, very thick] (n2) -- (n3);
\draw [->, very thick] (n3) -- (c2);
\draw [->, very thick] (c2) -- (n4);
\draw [->, very thick] (n4) -- (n5);
\draw [->, very thick] (n5) -- (c3);
\draw [->, very thick] (n4) -- (z1) -- (z2) -- (n2);
\draw [->, very thick] (n1) -- (z4) -- (z3) -- (n5);
\end{tikzpicture}

\begin{proposition}{\cite{szenesrok},Proposition 4.5.}\label{thereissubescher}  Any $k+l$-Escher has at least one valid $k$-subescher.\end{proposition}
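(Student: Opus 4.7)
My plan is to argue by contradiction. Suppose the $(k+l)$-Escher $v = [v_0, \dots, v_{k+l-1}]$ admits no valid $k$-subescher, so that for every $m \in \mathbb{Z}/(k+l)$ either the would-be closing arrow $v_{m+k} \to v_{m+1}$ (condition A) or the bridging arrow $v_m \to v_{m+k+1}$ (condition B) fails. I would first translate the problem to the real line: assign a position $p(v_t) \in \mathbb{R}$ to each interval and set $d_t = p(v_{t+1}) - p(v_t)$ with indices taken mod $n := k+l$. The Escher condition becomes $d_t > -1$ for each $t$, and cyclicity forces $\sum_t d_t = 0$. A telescoping computation shows that the $k$-subescher at $m$ is valid exactly when
\[
A(m) := \sum_{t=m+1}^{m+k-1} d_t < 1 \quad\text{and}\quad B(m) := \sum_{t=m+k+1}^{m-1} d_t < 1,
\]
so the failure assumption reads: for every $m$, $A(m) \ge 1$ or $B(m) \ge 1$.

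I would then repackage the failures into a weighted digraph $H$ on $\{v_0, \dots, v_{n-1}\}$ consisting of the $n$ Hamiltonian $\to$-arrows (each carrying the slack $d_t > -1$) together with one $\prec$-chord per $m$: $v_{m+1} \to v_{m+k}$ if A fails (certifying $p(v_{m+k}) - p(v_{m+1}) \ge 1$), or $v_{m+k+1} \to v_m$ if B fails (certifying $p(v_m) - p(v_{m+k+1}) \ge 1$). Around any directed cycle in $H$ the telescoping sum of $p$-differences is $0$; yet each $\prec$-edge contributes $\ge 1$ and each $\to$-edge contributes strictly more than $-1$. Consequently, any directed cycle in $H$ containing at least as many $\prec$-edges as $\to$-edges (with at least one edge) instantly delivers the contradiction $0 > 0$. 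The entire proposition therefore reduces to exhibiting such a favorable cycle inside $H$.

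The boundary cases $k=1$ and $l=1$ are immediate, since then either $A(m)$ or $B(m)$ is an empty sum and the averaging identity $\sum_m A(m) = (k-1)\sum_t d_t = 0$ (respectively for $B$) already contradicts the universal failure assumption. The analogous two-sided averaging $|\{m : A(m) \ge 1\}| + |\{m : B(m) \ge 1\}| < n$ also works in the borderline case $k = l = 2$ but breaks down once $1/k + 1/l \le 1$. The step I expect to be the main obstacle is producing the favorable cycle in $H$ for the general case: A-chords span arcs of length $k-1$ while B-chords span arcs of length $l-1$, so finding a nonnegative linear combination of the $n$ failure inequalities in which every $d_t$ receives the same total coefficient is a nontrivial covering problem. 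I would approach it either via an LP/duality argument tuned to the two arc lengths, or by an induction on $\min(k,l)$ after extracting an extremal failure configuration (for instance one minimizing $\sum_m \max(A(m), B(m))$) and analyzing how cyclic shifts or the deletion of a removable vertex $v_i$ with $v_{i-1} \to v_{i+1}$ reduce the problem to a shorter Escher where the inductive hypothesis supplies the desired cycle.
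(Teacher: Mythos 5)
Your reduction of the statement to a combinatorial cycle-finding problem is sound: setting $p_j = p(v_j)$, $d_t = p_{t+1} - p_t$, the Escher condition is indeed $d_t > -1$ with $\sum_t d_t = 0$, and the two requirements on a splitting point $m$ do become $A(m) := p_{m+k} - p_{m+1} < 1$ and $B(m) := p_m - p_{m+k+1} < 1$, so the failure hypothesis, the digraph $H$, and the observation that a closed walk with at least as many $\prec$-chords as Hamiltonian arcs would yield $0 > 0$ are all correct. But the step you flag as ``the main obstacle''---actually exhibiting such a walk for general $(k,l)$---is the entire content of the proposition and is left unresolved, so as written this is a plan with a genuine gap, not a proof. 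Moreover the sub-goal you set yourself (a nonnegative combination of the failure inequalities giving \emph{every} $d_t$ the same coefficient) is stronger than what is needed: the $d_t > -1$ constraints supply slack, and your own weighted-cycle formulation already accounts for this, but the uniform-cover phrasing does not, which is part of why the covering problem looked hard.

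The gap closes with one short observation and does not require the LP/duality or inductive machinery you contemplate; the favorable cycle is always a $4$-arc cycle. Since $A(m) + B(m) = -(d_m + d_{m+k}) < 2$, the two conditions can never fail simultaneously, so the failure type is a well-defined map $\epsilon : \mathbb{Z}/n \to \{A,B\}$. Your averaging identity $\sum_m A(m) = (k-1)\sum_t d_t = 0$ (and its $B$-analogue) rules out $\epsilon$ being constant, because a constant pattern would force $n$ quantities $\ge 1$ to sum to $0$. A non-constant cyclic $\epsilon$ has a descent $\epsilon(m) = B$, $\epsilon(m+1) = A$. Adding $B(m) = p_m - p_{m+k+1} \ge 1$ and $A(m+1) = p_{m+k+1} - p_{m+2} \ge 1$ gives $d_m + d_{m+1} = p_{m+2} - p_m \le -2$, contradicting $d_m, d_{m+1} > -1$. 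In the language of $H$, this is exactly the closed walk
\[
v_m \xrightarrow{\ \mathrm{Ham}\ } v_{m+1} \xrightarrow{\ \mathrm{Ham}\ } v_{m+2} \xrightarrow{\ A\text{-chord}\ } v_{m+k+1} \xrightarrow{\ B\text{-chord}\ } v_m,
\]
with two Hamiltonian arcs and two $\prec$-chords, the $a = b = 2$ cycle you were looking for.
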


\begin{definition} Let $u=[u_0,\ldots, u_{k-1}] \in E_k^U$ and $v=[v_0,\ldots, v_{l-1}] \in E_l^U$ be disjoint subeschers of length $k,l$ respectively, that is, $(u,v)\in E_{k,l}^U$ is a $(k,l)$ Escher-pair. We call 
$i \in \mathbb{Z}$ an $\textbf{insertion point}$ for $(u,v)$ if $u_i \rightarrow v_{i+1}$ and $v_i \rightarrow u_{i+1}$ holds. In this case we can concatenate $u$ and $v$ to get a length $k+l$ escher $u+_iv$ as follows:
\begin{enumerate}
\item for $i = 0, ..., k-1$ 
$$u+_iv = [u_0, ..., u_i, v_{i+1}, ..., v_{i+l}, u_{i+1}, ..., u_{k-1}]$$
\item for $i \geq k$ we define
$$u+_iv = [u_q, ..., u_i, v_{i+1}, ..., v_{i+l}, u_{i+1}, ..., u_{q+k-1}]$$ 
where $q \in (i-k,i]$ is choosen s.t $k|q$. 
\end{enumerate}
Denote the set of insertion points by $\textbf{I}(u,v)$, and if this is nonempty, then the first (smallest non-negative) insertion point by $\textbf{FirstIns}(u,v)$. 
\end{definition}

%\begin{figure}
%    \centering
%    \begin{tikzpicture}
%    \draw (0,2) node[left] {0} -- (2,2);
%    \draw (0.5,1.75) node[left] {1} -- (2.5,1.75);
%    \draw (1,1.5) node[left] {2} -- (3,1.5);
%    \draw (1.5,1.25) node[left] {3} -- (3.5,1.25);
%    \draw (2.75,1) node[left] {4} -- (4.75,1);
%    \draw (4,0.75) node[left] {5} -- (6,0.75);
%    \draw (4.5,0.5) node[left] {6} -- (6.5,0.5);
%    \end{tikzpicture}
%    \caption{Caption}
%    \label{fig:enter-label}
%\end{figure}
%\vspace{0.3cm}

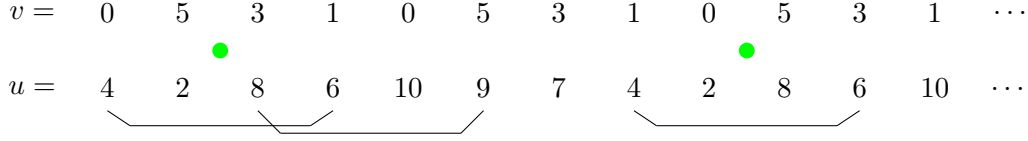
\begin{figure}
\begin{center}
    \begin{tikzpicture}
        % Labels
        \node at (-1, 0) {$v=$};
        \node at (-1, -1) {$u=$};
        
        % First row
        \node at (0, 0) {0};
        \node at (1, 0) {5};
        \node at (2, 0) {3};
        \node at (3, 0) {1};
        \node at (4, 0) {0};
        \node at (5, 0) {5};
        \node at (6, 0) {3};
        \node at (7, 0) {1};
        \node at (8, 0) {0};
        \node at (9, 0) {5};
        \node at (10, 0) {3};
        \node at (11, 0) {1};
        \node at (12, 0) {$\ldots$};

        % Second row
        \node at (0, -1) {4};
        \node at (1, -1) {2};
        \node at (2, -1) {8};
        \node at (3, -1) {6};
        \node at (4, -1) {10};
        \node at (5, -1) {9};
        \node at (6, -1) {7};
        \node at (7, -1) {4};
        \node at (8, -1) {2};
        \node at (9, -1) {8};
        \node at (10, -1) {6};
        \node at (11, -1) {10};
        \node at (12, -1) {\ldots};
        
        % Green dot between the two lines
        \fill[green] (1.5, -0.5) circle (3pt);
        \fill[green] (8.5, -0.5) circle (3pt);

        % Underbrace between 4 and 6 in the second row
        \draw (0, -1.3) -- (0.3, -1.5);
        \draw (0.3, -1.5) -- (2.7, -1.5);
        \draw (2.7, -1.5) -- (3, -1.3);

        \draw (2, -1.3) -- (2.3, -1.6);
        \draw (2.3, -1.6) -- (4.7, -1.6);
        \draw (4.7, -1.6) -- (5, -1.3);

        \draw (7, -1.3) -- (7.3, -1.5);
        \draw (7.3, -1.5) -- (9.7, -1.5);
        \draw (9.7, -1.5) -- (10, -1.3);
    \end{tikzpicture}
    \caption{Insertion points and sub-Eschers for the Escher pair $u=[4,2,8,6,10,9,7], v=[0,5,3,1]$ in the UIO $U=(0,0,1,1,2,3,3,4,6,7,9)$. Green dots indicate insertion points (i.e $2 \to 3$ and $5 \to 8$ in $U$, and underbrace indicates length $4$ sub-Eschers in $u$.}
    \label{fig:subeschers}
\end{center}
\end{figure}

\begin{example}
Let $U$ be the UIO with $11$ intervals given by the area sequence $U=(0,0,1,1,2,3,3,4,6,7,9)$ as in Figure \ref{fig:subeschers}. Let 
$$u=[4,2,8,6,10,9,7], v=[0,5,3,1]$$ 
forming a $(7,4)$ Escher-pair $(u,v) \in E_{4,3}^U$.  We put $(u,v)$ periodically under each other with a period $lcm(7,4)=28$ as in Figure \ref{fig:subeschers}. We indicated by green array the insertion points. 
\end{example}

In contrast to Proposition \ref{thereissubescher}, there are Escher pairs that have no insertion points. Hence Proposition \ref{stanleyprop} intuitively suggests that for a fixed pair $(k,l)$ the Stanley coefficient $c_{(k,l)}^U$ counts $(k,l)$ Escher-pairs in $U$ which cannot be concatenated to an $(k+l)$-Escher. The technical difficulty lies in handling Eschers as linear, not cyclic objects. 

To follow this intuition, to an Escher pair $(u,v) \in E_{k,l}^U$ we associate a 3-dimensional rational vector, which we call the \textbf{core vector}, defined as the function

\[
\tau^2: E_{k,l}^U \to \mathbb{Q}^4
\]
%\scalebox{0.9}{
\[
\tau^2(u,v) = 
\begin{cases}
    (0,-1,-1,-0.5) & \text{if } \mathbf{I}(u,v) = \emptyset, \\
    (0,\mathbf{SEStart}(u,l), \mathbf{SEEnd}(u,l), \mathbf{FirstIns}(u,v) + 0.5) & \text{if } \mathbf{I}(u,v) \neq \emptyset \ \& \ 0 < \mathbf{SEStart}(u,l), \\
    (0,-1,-1, \mathbf{FirstIns}(u,v) + 1.5) & \text{if } \mathbf{I}(u,v) \neq \emptyset \ \& \ 0 = \mathbf{SEStart}(u,l).
\end{cases}
\]

\vspace{0.3pt}

By Proposition \ref{thereissubescher} all cases are covered. We call $\tau^2$ the \textbf{core representation} for Escher pairs. We spent considerable time to come up with the right core coordinates, and generalise the core representation to Escher tuples of length 3 and higher. We see that the second and third coordinates are integers, the fourth coordinate is always half-integer, this is crucial in the RL model. We will denote the $i$th coordinate with $\tau^2_i(u,v)$, and simply refer to them as $0,\textbf{SEStart,SEEnd,FirstIns}$, but keeping in mind their more sophisticated definition. 

The intuition is that the numerical relationship among the core coordinates characterizes those $(k,l)$ Escher pairs which do not concatenate to a $(k+l)$ Escher in Proposition \ref{stanleyprop}. Our first result supports this intuition. 
\begin{MLtheorem}[\textbf{Counting formula for pairs}]\label{mainthm1} For any UIO of length at least $k+l$ 
\begin{equation}\label{condpairs}
    c_{(k,l)}^U \ge \# \{(u,v) \in E_{k,l}^U: \tau^2_3(u,v)<\tau^2_4(u,v)\}
\end{equation}
and equality holds for all UIOs when $\lambda=(k,k)$. Moreover, equality holds for almost all $UIO$s even when $k\neq l$: for a fixed $(k,l)$ the mismatch ratio is less than $0.5\%$, see Remark \ref{remark:expl} for explanation. 
\end{MLtheorem}

\begin{remark}\label{remark:expl}
    \begin{enumerate}
        \item Table \ref{tab:pairs} illustrates Theorem \ref{mainthm1}. We observe that for pairs $(k,l)\neq (5,3),(5,4)$ we get perfect matching and equality in \eqref{condpairs}. For $\lambda = (5, 3)$, there is a mismatch in \eqref{condpairs} for only 1 out of 429 UIOs. This exceptional UIO is 
    \[U=(0,0,1,2,3,4,5,6)\]
    which does not exhibit any obvious unusual properties, making it unclear why the mismatch occurs specifically with this UIO.
    \item More notably, from Table \ref{tab:pairs} the sum of the 429 Stanley coefficients is 52,500, while the total absolute error is just 1. 

    \item This phenomenon highlights why proving a counting formula is so challenging: we encounter extremely rare exceptional UIOs with minimal mismatches, yet there is no apparent explanation for their occurrence.
\end{enumerate}
\end{remark}

We can illustrate the conditions in \eqref{condpairs} as a graph, whose vertices are the cores of the representation, and an oriented edge from core1 to core2 means that $core1<core2$, see Figure \ref{fig:condgraphpair}. 

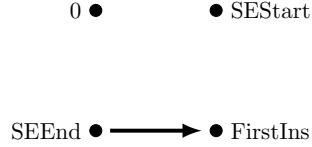
\begin{figure}[ht]
    \centering
    \begin{tikzpicture}[node distance=1.5cm, auto, scale=0.8, every node/.style={transform shape}]

        % Nodes with filled circles
        \node[draw, circle, fill=black, inner sep=2pt, label=left:{0}] (A) at (0,0) {};
        \node[draw, circle, fill=black, inner sep=2pt, label=right:{SEStart}] (B) at (2,0) {};
        \node[draw, circle, fill=black, inner sep=2pt, label=left:{SEEnd}] (C) at (0,-2) {};
        \node[draw, circle, fill=black, inner sep=2pt, label=right:{FirstIns}] (D) at (2,-2) {};

       %\draw[->, >=latex, line width=1pt] (C) -- (D);
        \draw[->, >=latex, line width=1.5pt, shorten >=3pt, shorten <=3pt] (C) -- (D);
        %\draw[-{Latex[scale=2]}, thick] (C) -- (D);
    \end{tikzpicture}
    \caption{Condition graph for Escher pairs}
    \label{fig:condgraphpair}
\end{figure}

For a triple $\lambda = (k, l, m)$, the idea is that the Stanley coefficient $c_{(k, l, m)}^U$ should count the $(k, l, m)$ Escher tuples where no two tuples concatenate at the deepest level—in other words, any two tuples remain non-concatenated. For a triple $(u, v, w) \in E_{k, l, m}^U$, we use the core coordinates for the pairs $(u, v)$, $(u, w)$, and $(v, w)$. However, if $u$ and $v$ do concatenate, then we need to store the core coordinates for the pair $(uv, w)$, $(uw, v)$ and $(vw, u)$ as well. This results in a total of 19 core coordinates as follows:  
\[\tau^3: E_{k,l,m}^U \to \mathbb{Q}^{19}\]
\[
    \tau^3(u,v,w)=(0,\tau^2(u,v)[1:],\tau^2(v,w)[1:],\tau^2(u,w)[1:],
    \tilde{\tau}^2(uv,w),\tilde{\tau}^2(uw,v),\tilde{\tau}^2(vw,u))
\]
where 
\begin{enumerate}
    \item $\tau^2(u,v)[1:]$ stands for the last 3 coordinates of $\tau^2(u,v)$, that is, we drop the $0$ from the beginning.
    \item the definition of $\tilde{\tau}^2$ is quite subtle due to the cases where $u$ and $v$ do not concatenate, making the cores involving $uv$ undefined or irrelevant. We let
\begin{equation}\label{def:tildetau}\tilde{\tau}^2(uv,w)=\begin{cases}
    (-1,-1,-0.5) & \text{ if } \mathbf{I}(u,v)=\emptyset \\
    \tau^2(u+_iv,w)[1:] & \text{ if } \mathbf{FirstIns}(u,v)=i
\end{cases}
\end{equation}
\end{enumerate}

For a more concise and descriptive reference we use the following shorthand notation: for a triple $(u,v,w)\in E_{(k,l,m)}^U$ we put
\[\mathbf{SEStart}(u,v)=\mathbf{SEStart}(u,|v|) \text{ and } \mathbf{SEEnd}(u,v)=\mathbf{SEEnd}(u,|v|).\]
So the notations for the $19$ core coordinates are 
\begin{align*}
    \tau_3(u,v,w)=& (0,
    \mathbf{SEStart}(u,v), \mathbf{SEEnd}(u,v), \mathbf{FirstIns}(u,v), \\
    & \mathbf{SEStart}(v,w), \mathbf{SEEnd}(v,w), \mathbf{FirstIns}(v,w), \\
    & \mathbf{SEStart}(u,w), \mathbf{SEEnd}(u,w), \mathbf{FirstIns}(u,w), \\
    & \mathbf{SEStart}(uv,w), \mathbf{SEEnd}(uv,w), \mathbf{FirstIns}(uv,w), \\
    & \mathbf{SEStart}(uw,v), \mathbf{SEEnd}(uw,v), \mathbf{FirstIns}(uw,v), \\
    & \mathbf{SEStart}(vw,u), \mathbf{SEEnd}(vw,u), \mathbf{FirstIns}(vw,u))
\end{align*}

The condition graph we are looking for has maximum 19 vertices, and our RL agent found the following
\begin{MLtheorem}[\textbf{Counting formula for triples}]
    \begin{enumerate}
    \item For a triple $(k,l,m)$ with $k \ge l \ge m$
    \[
    c_{(k,l,m)}^U \approx \# \left\{ (u,v,w) \in E_{k,l,l}^U \ : \ 
    \begin{aligned}
        & \mathbf{SEEnd}(u,v) < \mathbf{FirstIns}(u,v), \\
        & \mathbf{SEEnd}(u,w) < \mathbf{FirstIns}(u,w), \\
        & \mathbf{SEEnd}(v,w) < \mathbf{FirstIns}(v,w)
    \end{aligned}
    \right\}
    \]
    where $\approx$ means that the mismatch ratio is $<3\%$ and total absolute error ratio less than 1\% (see Table \ref{tab:triple1}).
    \item For $\lambda=(k,l,l)$ with $k\ge l$ we have equality above, and our counting formula gives the Stanley coefficient (see Table \ref{tab:triple1})
    \item There is a modified canonical model which gives lower bound for all triples $(k,l,m)$ with $k \ge l \ge m$ (see Table \ref{tab:triple2}):
    \[
    c_{(k,l,m)}^U \ge \# \left\{ (u,v,w) \in E_{k,l,m}^U \ : \ 
    \begin{aligned}
        & 0 \ge \mathbf{SEStart}(u,v), 0 \ge \mathbf{SEStart}(u,w), 0 \ge \mathbf{SEStart}(v,w) \\
        & \mathbf{SEEnd}(u,v) < \mathbf{FirstIns}(u,v), \\
        & \mathbf{SEEnd}(u,w) < \mathbf{FirstIns}(u,w), \\
        & \mathbf{SEEnd}(v,w) < \mathbf{FirstIns}(v,w)
    \end{aligned}
    \right\}
    \]
    \end{enumerate}
\end{MLtheorem}
The corresponding condition graphs are shown in Figure \ref{fig:condgraph3}.

\begin{figure}[ht]
    \begin{subfigure}[b]{0.5\textwidth}
    \centering
    \begin{tikzpicture}[node distance=2cm, auto, scale=0.7, every node/.style={transform shape}]
        
        % Define coordinates for the circular layout
        \node[draw, circle, fill=black, inner sep=2pt, label=above:{SEEnd(u,v)}] (A) at (90:3) {};
        \node[draw, circle, fill=black, inner sep=2pt, label=above right:{\text{FirstIns}(u,v)}] (D) at (30:3) {};
        \node[draw, circle, fill=black, inner sep=2pt, label=right:{SEEnd(u,w)}] (B) at (-30:3) {};
        \node[draw, circle, fill=black, inner sep=2pt, label=below right:{\text{FirstIns}(u,w)}] (E) at (-90:3) {};
        \node[draw, circle, fill=black, inner sep=2pt, label=below left:{SEEnd(v,w)}] (C) at (-150:3) {};
        \node[draw, circle, fill=black, inner sep=2pt, label=left:{\text{FirstIns}(v,w)}] (F) at (150:3) {};

        % Shorter edges with bigger arrowheads
        \draw[->, >=latex, thick, shorten >=1mm, shorten <=1mm, line width=1.2pt] (A) -- (D);
        \draw[->, >=latex, thick, shorten >=1mm, shorten <=1mm, line width=1.2pt] (B) -- (E);
        \draw[->, >=latex, thick, shorten >=1mm, shorten <=1mm, line width=1.2pt] (C) -- (F);
    \end{tikzpicture}
    \caption{Best approximating graph and perfect match for $(k,l,l)$}
    \end{subfigure}
    \hfill
    \begin{subfigure}[b]{0.5\textwidth}
    \centering
    \begin{tikzpicture}[node distance=2cm, auto, scale=0.75, every node/.style={transform shape}]

        % Define coordinates for the circular layout (10 equidistant points)
        \node[draw, circle, fill=black, inner sep=2pt, label=above:{SEEnd(u,v)}] (A) at (90:3) {};
        \node[draw, circle, fill=black, inner sep=2pt, label=above right:{\text{FirstIns}(u,v)}] (D) at (54:3) {};
        \node[draw, circle, fill=black, inner sep=2pt, label=right:{SEEnd(u,w)}] (B) at (18:3) {};
        \node[draw, circle, fill=black, inner sep=2pt, label=below right:{\text{FirstIns}(u,w)}] (E) at (-18:3) {};
        \node[draw, circle, fill=black, inner sep=2pt, label=below:{SEEnd(v,w)}] (C) at (-54:3) {};
        \node[draw, circle, fill=black, inner sep=2pt, label=below left:{\text{FirstIns}(v,w)}] (F) at (-90:3) {};
        \node[draw, circle, fill=black, inner sep=2pt, label=below left:{SEStart(v,w)}] (G) at (-126:3) {};
        \node[draw, circle, fill=black, inner sep=2pt, label=left:{SEStart(u,v)}] (H) at (-162:3) {};
        \node[draw, circle, fill=black, inner sep=2pt, label=above left:{SEStart(u,w)}] (I) at (126:3) {};
        \node[draw, circle, fill=black, inner sep=2pt, label=above:{0}] (J) at (162:3) {};

        % Draw edges
        \draw[->, >=latex, thick, shorten >=1mm, shorten <=1mm, line width=1.2pt] (A) -- (D);
        \draw[->, >=latex, thick, shorten >=1mm, shorten <=1mm, line width=1.2pt] (B) -- (E);
        \draw[->, >=latex, thick, shorten >=1mm, shorten <=1mm, line width=1.2pt] (C) -- (F);
        
        % New edges pointing from SEStart nodes to '0'
        \draw[->, >=latex, thick, shorten >=1mm, shorten <=1mm, line width=1.2pt] (G) -- (J);
        \draw[->, >=latex, thick, shorten >=1mm, shorten <=1mm, line width=1.2pt] (H) -- (J);
        \draw[->, >=latex, thick, shorten >=1mm, shorten <=1mm, line width=1.2pt] (I) -- (J);

    \end{tikzpicture}
    \caption{Lower bound for all triples}
    \end{subfigure}
    \caption{The condition graphs for Escher triples}
    \label{fig:condgraph3}
\end{figure}
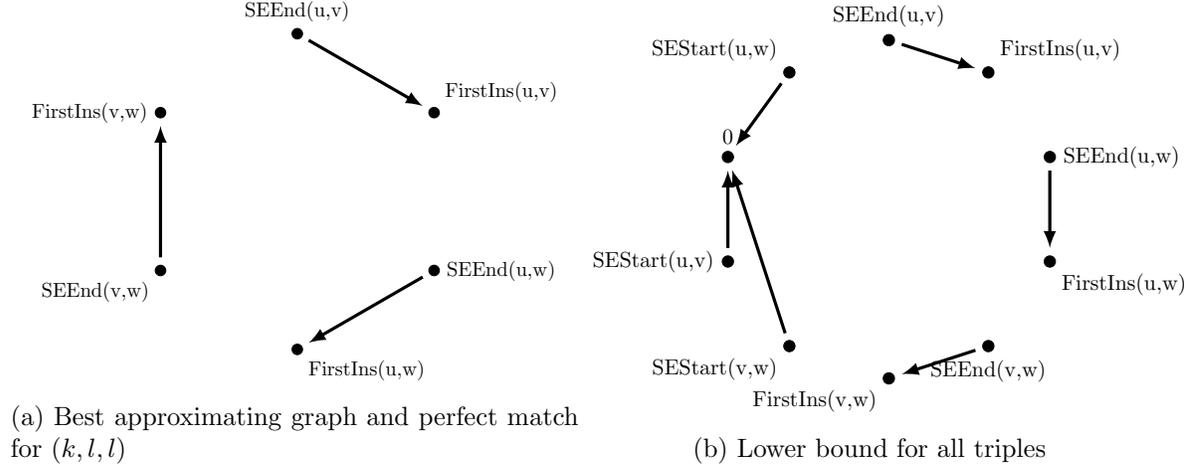

We concluded similar results for quadruples, see Table \ref{tab:quadruple1},\ref{tab:quadruple2} and we summarize our findings as 

\begin{MLtheorem}[\textbf{General counting formula}]\label{mainthm3}  Let $\lambda=(\lambda_1 \ge  \lambda_2 \ge \ldots \ge \lambda_r)$ be a partition then 
\begin{enumerate}
\item \[
    c_{\lambda}^U \approx \# \left\{ (u_1,\ldots u_r) \in E_{\lambda}^U \ : \  \mathbf{SEEnd}(u_i,u_j) < \mathbf{FirstIns}(u_i,u_j) \text{ for all } 1\le i<j \le r
    \right\}
    \]
    where $\approx$ means that the mismatch ratio is $<1\%$ for most $\lambda$'s 
    \item For $\lambda=(k,l,l,\ldots l)$ with $k\ge l$ we have equality above, and our counting formula gives the Stanley coefficient. 
    \item Slight modification provides lower bound for the coefficients:
    \[
    c_{\lambda}^U \ge \# \left\{ (u_1,\ldots u_r) \in E_{\lambda}^U \ : \  
    \begin{aligned}
        & \mathbf{SEEnd}(u_i,u_j) < \mathbf{FirstIns}(u_i,u_j) \text{ for all } 1\le i<j \le r \\ 
        & 0 \ge \mathbf{SEStart}(u_i,u_j) \text{ for all } 1\le i<j \le r 
    \end{aligned}    
    \right\}
    \]
    \end{enumerate}

\end{MLtheorem}

\section{Mathematical background}

\subsection{Stanley's G-homomorphism} 

Let $\Lambda \subset \mathbb{Z}[x_1,x_2,\ldots]$ denote the ring of symmetric functions in the variables $x_1,x_2,\ldots$, generated as an algebra by the the elementary symmetric functions $\{e_i: i=0,1,\ldots\}$ defined in Definition \ref{def:em}, and as a $\mathbb{Z}$-module by $\{e_\lambda\}$.
Given a finite graph $G$ with vertices $V=\{v_1, ..., v_n\}$, let $\Lambda_G = \mathbb{Z}[v_1, ..., v_n]$, be the polynomial ring on the vertices. Stanley's G-homomorphism $\rho_G:\Lambda \to \Lambda_G$ is defined as the unique ring-homomorphism s.t.
$$\rho_G(e_i) = \sum_{\substack{ S \subseteq V\\ \text{S is empty}, |S|=i }} \prod\limits_{v \in S} v$$
where the sum is taken over all subgraphs on $i$ vertices without edges. For $f \in \Lambda$ we set $f^{G} = \rho_G(f)$. 
\begin{example}
Let $G$ be a finite graph on $n$ vertices. Then $e_1^G = \sum_{v \in G} v$, if furthermore $G$ has at least 1 edge, then $e_n^{G} = 0$. For a partition $\lambda = (\lambda_1 \geq \lambda_2 \geq \dots \geq \lambda_r)$ we have
\[
e^G_\lambda = \prod_{i=1}^{r} e^G_i,
\]
\end{example}

\begin{definition} Let $\alpha:V \to \mathbb{N}$ be a function assigning a nonnegative integer to each vertex of $G$.
\begin{enumerate}
\item   We use the shorthand notation $\bold{v}^{\alpha} = \prod\limits_{v \in V} v^{\alpha(v)} \in \Lambda_G$, and for $f \in \Lambda$, let $[f^G, \bold{v}^{\alpha}]$ denote the coefficient of the monomial $\bold{v}^{\alpha}$ in $f^{G}$. 
\item We denote by $G^{\alpha}$, the graph where each vertex $v_i$ is replaced by the complete graph $K_{\alpha(i)}$, and for $i \neq j$ a vertex from $K_{\alpha(i)}$ is connected to a vertex from $K_{\alpha(j)}$ if and only if $v_i$ and $v_j$ are connected by an edge in $G$.
\end{enumerate}
\end{definition}    

Note that the function $f \mapsto [f^G, \bold{v}^{\alpha}]$ is linear, but not a ring-homomorphism: given a graph $G$ with at least one edge we have $[e_i^{G}, \bold{v}^{\alpha}] = 0$ for all $i \in \mathbb{N}$ but $[((e_1)^n)^G, \bold{v}^{\alpha}] = n!$.

\begin{definition} We have other canonical bases of symmetric functions:
\begin{enumerate}
\item Denote by $p_m$ the $m$-th power sum symmetric function 
$p_m = \sum_{i \in \mathbb{N}} x_i^m$.
Given a partition $\lambda = (\lambda_1 \geq \lambda_2 \geq \dots \geq \lambda_k)$, we define the \textbf{power sum symmetric function} 
\[
p_\lambda = \prod_{i=1}^k p_{\lambda_i}.
\]
\item Given a partition $\lambda = (\lambda_1 \geq \lambda_2 \geq \dots \geq \lambda_k)$, we define the \textbf{monomial symmetric function}
\[
m_\lambda = \sum_{i_1 < i_2 < \dots < i_k} \sum_{\sigma \in S_k(\lambda)} x_{i_1}^{\lambda_1} x_{i_2}^{\lambda_2} \dots x_{i_k}^{\lambda_k},
\]
where the inner sum is taken over the set of all permutations of the sequence $\lambda$, denoted by $S_k(\lambda)$.
\item For a partition $\lambda = (\lambda_1 \geq \lambda_2 \geq \dots \geq \lambda_k)$, define the \textbf{Schur functions} as
\[
s_\lambda = \det(e_{\lambda^*_i + j - i})_{i,j},
\]
where $\lambda^*$ is the conjugate partition to $\lambda$.
\end{enumerate}
The sets $\{p_\lambda\}$,$\{m_\lambda\}$,$\{s_\lambda\}$ form three basis of the ring $\Lambda$ of symmetric functions.
\end{definition} 

\begin{proposition}[Stanley \cite{ghomo}]\thlabel{Tcoef}
Let $\bold{x}=(x_1,x_2,\ldots )$ denote an infinite set of variables and $\bold{v}=(v_1,\ldots, v_n)$ the variables given by the vertices of $G$. Let $$T(\bold{x}, \bold{v}) = \sum_{\lambda}e_{\lambda}(\bold{x})m_{\lambda}^{G}(\bold{v})$$
then 
$$[T(\bold{x}, \bold{v}), \bold{v}^{\alpha}]\prod\limits_{v \in V}\alpha(v)! = X_{G^{\alpha}}$$
\end{proposition}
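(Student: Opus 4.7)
The plan is to derive the identity from the dual Cauchy formula
\[ \prod_{i,j}(1+x_i y_j) \;=\; \sum_\lambda e_\lambda(\mathbf{x})\,m_\lambda(\mathbf{y}), \]
by applying Stanley's $G$-homomorphism $\rho_G$ in the $\mathbf{y}$-variables and then matching the coefficient of $\mathbf{v}^\alpha$ with the sum-over-proper-colorings definition of $X_{G^\alpha}$. First I would verify the identity by the one-line computation $\prod_j\prod_i(1+x_iy_j) = \prod_j\sum_{k_j\geq 0}e_{k_j}(\mathbf{x})y_j^{k_j}$ and then collecting the $y$-monomials whose exponent vector rearranges to a fixed partition $\lambda$: these combine to $m_\lambda(\mathbf{y})$ while the $x$-factor becomes $e_\lambda(\mathbf{x})$.

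Because $\rho_G$ is a ring homomorphism sending $e_k(\mathbf{y})\mapsto\sum_{|S|=k,\;S\text{ independent in }G}\prod_{v\in S}v$, applying it to the identity (working degree-by-degree in $\mathbf{y}$, so that each homogeneous part is a finite sum) yields
\[ T(\mathbf{x},\mathbf{v}) \;=\; \prod_i\left(\sum_{S\text{ indep.\ in }G} x_i^{|S|}\prod_{v\in S}v\right) \;=\; \sum_{(S_1,S_2,\dots)} \prod_i x_i^{|S_i|}\prod_i\prod_{v\in S_i}v,\]
where the outer sum runs over all (eventually empty) sequences of independent sets of $G$ indexed by the color $i$. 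Extracting the coefficient of $\mathbf{v}^\alpha$ then gives
\[ [T,\mathbf{v}^\alpha] \;=\; \sum_{\substack{S_i\text{ independent in }G\\|\{i:\,v\in S_i\}|=\alpha(v)\ \forall v}} \prod_i x_i^{|S_i|}. \]

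The final step is a transpose bijection $v\in S_i\iff i\in T_v$ identifying each such sequence with a family of color-sets $\{T_v\subseteq\mathbb{N}\}_{v\in V}$ satisfying $|T_v|=\alpha(v)$ and $T_v\cap T_{v'}=\emptyset$ whenever $v\sim v'$ in $G$; the independence of each $S_i$ is exactly the pairwise disjointness of the $T_v$'s. Each such family $\{T_v\}$ lifts to exactly $\prod_v\alpha(v)!$ proper colorings of $G^\alpha$, obtained by choosing a bijection $K_{\alpha(v)}\to T_v$ for every $v$, and the monomial $x_c$ of any such coloring is precisely $\prod_i x_i^{|S_i|}$. Summing over $\{T_v\}$ therefore gives $\prod_v\alpha(v)!\cdot[T,\mathbf{v}^\alpha]=X_{G^\alpha}$.

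The main subtlety I anticipate is purely formal: one must justify working with the infinite sum $\sum_\lambda e_\lambda m_\lambda$ and applying $\rho_G$ termwise, which is handled by restricting to a single $\mathbf{y}$-degree at a time so that all sums become finite. Once that is in place, the substantive content is the transpose bijection between color-indexed families $(S_i)$ and vertex-indexed families $(T_v)$, together with the $\prod_v\alpha(v)!$ bookkeeping for the internal symmetries of each block $K_{\alpha(v)}$ in $G^\alpha$ — both of which are routine combinatorics.
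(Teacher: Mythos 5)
The paper states this proposition without proof, attributing it to Stanley \cite{ghomo}, so there is no in-paper proof to compare against. Your argument is nonetheless correct and complete: the dual Cauchy expansion $\prod_{i,j}(1+x_iy_j)=\sum_\lambda e_\lambda(\mathbf{x})m_\lambda(\mathbf{y})$, the application of $\rho_G$ to the $\mathbf{y}$-variables (the same device the paper itself invokes in the proof of the theorem immediately following), the transpose bijection $v\in S_i\iff i\in T_v$ identifying independence of each $S_i$ with pairwise disjointness of $T_v,T_{v'}$ for adjacent $v,v'$, and the $\prod_v\alpha(v)!$ count of lifts of a family $(T_v)$ to proper colorings of $G^\alpha$ all check out, and the formal issue of applying $\rho_G$ termwise is handled correctly by working in a fixed $\mathbf{y}$-degree. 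This is the standard argument for this identity.
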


Using \thref{Tcoef}, Stanley proved \cite{ghomo} that the simultaneous e-positivity of a large class of graphs is equivalent to monomial positivity of $m_{\lambda}^G$ for all partitions:

\begin{theorem}(Stanley) For every finite graph $G$ we have
\begin{enumerate}
\item $X_{G^{\alpha}}$ is s-positive for every $\alpha:V(G) \to \mathbb{N} \iff s_{\lambda}^G \in \mathbb{N}[V(G)]$ for every partition $\lambda$
\item $X_{G^{\alpha}}$ is e-positive for every $\alpha:V(G) \to \mathbb{N} \iff m_{\lambda}^G \in \mathbb{N}[V(G)]$ for every partition $\lambda$
\end{enumerate}
\end{theorem}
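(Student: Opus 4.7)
The plan is to extract both equivalences directly from Stanley's identity in \thref{Tcoef} by expanding the kernel $T(\mathbf{x},\mathbf{v})$ in two different bases of $\Lambda$ (elementary and Schur) and reading off the coefficients of $\mathbf{v}^\alpha$ on the two sides. The guiding observation is that $X_{G^\alpha} = \prod_{v\in V}\alpha(v)!\cdot[T(\mathbf{x},\mathbf{v}),\mathbf{v}^\alpha]$ is an identity in $\Lambda$, so writing the right-hand side in any fixed $\mathbb{Z}$-basis of $\Lambda$ yields the unique expansion coefficients of $X_{G^\alpha}$ in that basis, up to the strictly positive factor $\prod_v\alpha(v)!$. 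Both implications of the theorem then reduce to matching coefficients.

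For part (2), substituting the given expansion $T(\mathbf{x},\mathbf{v}) = \sum_\lambda e_\lambda(\mathbf{x})\,m_\lambda^G(\mathbf{v})$ into this formula gives
$$X_{G^\alpha} \;=\; \sum_\lambda \Bigl(\prod_{v\in V}\alpha(v)!\,[m_\lambda^G,\mathbf{v}^\alpha]\Bigr)\,e_\lambda(\mathbf{x}).$$
Since $\{e_\lambda\}$ is a $\mathbb{Z}$-basis of $\Lambda$, these are the unique $e$-coefficients of $X_{G^\alpha}$. Because $\prod_v\alpha(v)!>0$, $e$-positivity of $X_{G^\alpha}$ for a fixed $\alpha$ is equivalent to $[m_\lambda^G,\mathbf{v}^\alpha]\ge 0$ for every $\lambda$; requiring this for \emph{every} $\alpha:V(G)\to\mathbb{N}$ is precisely the condition $m_\lambda^G\in\mathbb{N}[V(G)]$ for every partition $\lambda$.

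For part (1), I would re-expand $T$ in terms of Schur functions in $\mathbf{x}$. The dual Cauchy identity $\sum_\lambda e_\lambda(\mathbf{x})\,m_\lambda(\mathbf{y}) = \sum_\lambda s_\lambda(\mathbf{x})\,s_{\lambda'}(\mathbf{y})$ holds in $\Lambda\otimes\Lambda$. Applying $\rho_G$ in the $\mathbf{y}$-slot sends $m_\lambda\mapsto m_\lambda^G$ and, because $\rho_G$ is a ring homomorphism defined on the generators $e_i$, it sends $s_\mu\mapsto s_\mu^G$ via the Jacobi--Trudi definition $s_\mu=\det(e_{\mu^*_i+j-i})$. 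This yields the alternative expansion
$$T(\mathbf{x},\mathbf{v}) \;=\; \sum_\lambda s_\lambda(\mathbf{x})\,s_{\lambda'}^G(\mathbf{v}).$$
Repeating the coefficient extraction gives $X_{G^\alpha}=\sum_\lambda\bigl(\prod_v\alpha(v)!\,[s_{\lambda'}^G,\mathbf{v}^\alpha]\bigr)\,s_\lambda(\mathbf{x})$, so $s$-positivity of every $X_{G^\alpha}$ is equivalent to $[s_{\lambda'}^G,\mathbf{v}^\alpha]\ge 0$ for all $\lambda,\alpha$, and via the bijection $\lambda\leftrightarrow\lambda'$ on partitions this is the desired condition $s_\mu^G\in\mathbb{N}[V(G)]$ for every $\mu$.

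The only mildly non-routine step is justifying the passage to the Schur expansion of $T$: one must check that applying $\rho_G$ termwise to the dual Cauchy identity really produces a valid identity in $\Lambda\otimes\mathbb{Z}[V(G)]$. This is formal since $\rho_G$ is a ring homomorphism on the generators $e_i$, and for any fixed exponent $\alpha$ only finitely many $\lambda$ contribute to $[T,\mathbf{v}^\alpha]$, so no convergence issue arises. Once this bookkeeping is in place, both parts follow in complete parallel from the uniqueness of basis expansions in $\Lambda$ and the positivity of $\prod_v\alpha(v)!$.
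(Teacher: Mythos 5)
Your proposal is correct and follows essentially the same route as the paper: both use the dual Cauchy identity to obtain the $e\text{-}m$ and $s\text{-}s$ expansions of the kernel $T$, apply $\rho_G$ in the $\mathbf{v}$-slot, and then extract coefficients via Proposition \thref{Tcoef}, with the strictly positive factor $\prod_v \alpha(v)!$ carrying the positivity equivalence. The extra bookkeeping you supply on uniqueness of basis expansions and on pushing $\rho_G$ through the Jacobi--Trudi determinant is exactly the content the paper leaves implicit.
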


\begin{proof}
Let $\bold{x}$, $\bold{y}$ denote 2 infinite sets of independent variables. The Cauchy product 
$$C(\bold{x}, \bold{y}) = \prod\limits_{i,j}(1+x_iy_j)$$
yields the identities \cite[(4.2'), (4.3')]{macdonald} 
$$\sum_{\lambda}s_{\lambda}(\bold{x})s_{\lambda^*}(\bold{y}) = \sum_{\lambda}m_{\lambda}(\bold{x})e_{\lambda}(\bold{y}) = \sum_{\lambda}e_{\lambda}(\bold{x})m_{\lambda}(\bold{y})$$
with the sums over all partitions of any length. By viewing only $\bold{y}$ as the indeterminates and $\bold{x}$ as constant we can apply the G-homomorphism and get
$$\sum_{\lambda}s_{\lambda}(\bold{x})s_{\lambda^*}^{G}(\bold{v}) = \sum_{\lambda}m_{\lambda}(\bold{x})e_{\lambda}^{G}(\bold{v}) = \sum_{\lambda}e_{\lambda}(\bold{x})m_{\lambda}^{G}(\bold{v})$$
Using \thref{Tcoef} gives
$$X_{G^{\alpha}} = (\prod\limits_{v \in V}\alpha(v)!)\sum\limits_{\lambda}e_{\lambda}(\bold{x})[m_{\lambda}^{G}(\bold{v}), \bold{v}^{\alpha}] = (\prod\limits_{v \in V}\alpha(v)!)\sum\limits_{\lambda}s_{\lambda}(\bold{x})[s_{\lambda^*}^{G}(\bold{v}), \bold{v}^{\alpha}]$$
\end{proof}

If the chromatic polynomial is
$X^{G} = \sum\limits_{\lambda}c_{\lambda}^Ge_{\lambda}$, then using \thref{Tcoef} with $\alpha \equiv 1$, we get \[c_{\lambda}^G = [m_{\lambda}^{G}, v_1 \cdot v_2 \cdot ... \cdot v_n]\]
for any partition $\lambda$. In particular, the Stanley conjecture follows if 
\[[m_{\lambda}^{U}, v_1 \cdot v_2 \cdot ... \cdot v_n] \geq 0\]
for all unit interval order graphs $U$.

\subsection{Eschers} 
Let $U$ be a unit interval graph on $N$ vertices. Recall the notation $E_\lambda^U=\{\lambda \text{-Escher tuples in U}\}$, and $X^U=\sum_{\lambda} c_\lambda^U e_\lambda$ is the Stanley chromatic function. Let us work out the coefficient $c_N^U$ first. 
\begin{definition} Let $U$ be a UIO. We call a sequence $w = [w_1, \dots, w_k]$ of distinct elements of $U$ correct if
\begin{itemize}
    \item $w_i \nsucc w_{i+1}$ for $i = 1, 2, \dots, k - 1$ and
    \item for each $j = 2, \dots, k$, there exists $i < j$ such that $w_i \sim w_j$.
\end{itemize}
We denote by $C_k^U$ the set of length-k correct sequences in $U$. For a partition $\lambda=(\lambda_1,\ldots, \lambda_r)$ we denote by $C_\lambda^U$ the set of length $\lambda_1+\ldots +\lambda_r$-sequences in $U$, which are concatenations of a length-$\lambda_1$-correct sequence with a length-$\lambda_2$-correct sequence with a ... with a length-$\lambda_r$-correct sequence.
\end{definition}

\begin{theorem}[Szenes-Rok, \cite{szenesrok}]\thlabel{paunov}
For any $k\le N$
$$p_k^{U} = \sum\limits_{w \in C_{k}^U}w_1 \cdot ... \cdot w_{k} \in \mathbb{N}[U]$$
\end{theorem}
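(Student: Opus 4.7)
The plan is to prove the identity by induction on $k$, combining the Newton--Girard recursion
\[
p_k = (-1)^{k-1} k\, e_k + \sum_{i=1}^{k-1} (-1)^{i-1}\, e_i\, p_{k-i}
\]
inside $\Lambda$ with a sign-reversing involution after applying the G-homomorphism $\rho_U$. The base case $k=1$ is immediate: $p_1^U = e_1^U = \sum_{v \in U} v$, and every singleton is vacuously a correct sequence in $C_1^U$, so both sides agree.

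For the inductive step, I would apply $\rho_U$ to Newton's identity (valid because $\rho_U$ is a ring homomorphism) and substitute the inductive hypothesis $p_{k-i}^U = \sum_{w \in C_{k-i}^U} w_1 \cdots w_{k-i}$ to obtain
\[
p_k^U = (-1)^{k-1} k\, e_k^U \;+\; \sum_{i=1}^{k-1} (-1)^{i-1}\!\!\!\sum_{\substack{S\subseteq V(U)\\ S\text{ indep},\,|S|=i}}\!\!\!\Big(\prod_{v \in S} v\Big)\!\!\sum_{w \in C_{k-i}^U}\!\! w_1 \cdots w_{k-i}.
\]
The claim then reduces to a purely combinatorial identity between signed counts of pairs $(S,w)$ (with $S$ independent of size $i$ and $w$ a correct sequence of length $k-i$) and the unsigned count of length-$k$ correct sequences. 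I would establish it by constructing a sign-reversing involution $\iota$ on pairs with $|S|\ge 1$ whose non-fixed orbits pairwise cancel, and whose fixed points, together with the contribution of the diagonal $(-1)^{k-1}k\,e_k^U$ term, reconstruct $\sum_{w\in C_k^U} w_1\cdots w_k$.

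The involution: given a non-cancelled pair $(S, w)$, locate the \emph{rightmost} vertex $v^*$ (in the real-line ordering of the UIO) that is \emph{movable} between $S$ and $w$, i.e.\ such that swapping $v^*$ between the two preserves both the independence of $S$ and the two correctness conditions (the $\nsucc$ condition and the intersection-connectivity condition) of $w$. If $v^* \in S$, insert it into $w$ at the unique position compatible with the correctness conditions; if $v^* \in w$, extract it and add it to $S$. Each move changes $|S|$ by $\pm 1$ and therefore flips the sign $(-1)^{|S|-1}$. Crucially, the UIO-specific fact that the adjacency relation corresponds to a contiguous interval on the real line is what guarantees that the insertion position is uniquely determined and that $\iota$ is a genuine involution.

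The main obstacle is to verify that every pair with $|S|\ge 1$ admits at least one movable vertex, so that no spurious fixed points remain. This is a UIO-specific existence lemma, analogous in spirit to Proposition~\ref{thereissubescher} (which guarantees a valid subescher), and will likely require a careful case analysis on how the rightmost element of $S\cup\{w_1,\dots,w_{k-i}\}$ interacts with the ``connectivity-by-intersection'' condition defining $C_{k-i}^U$. A secondary delicate point is the multiplicity factor $k$ in the diagonal term $(-1)^{k-1}k\, e_k^U$: it must match the number of cyclic/linear choices of starting vertex at the extreme end of the involution where $|S|=k$ and $w$ is empty, ensuring a consistent endpoint for the cancellation.
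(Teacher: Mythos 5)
The paper does not prove this theorem; it is quoted from Szenes--Rok without proof, so there is no in-paper argument to compare against. Evaluating your attempt on its own: the algebraic scaffolding is sound. Newton--Girard is a valid identity in $\Lambda$, $\rho_U$ is a ring homomorphism, and substituting the inductive hypothesis correctly reduces the theorem to a signed combinatorial identity between pairs $(S,w)$ (with $S$ an independent set, $w$ a shorter correct sequence) and length-$k$ correct sequences. The base case $k=1$ is also fine.

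The gap is that the heart of the argument --- the sign-reversing involution --- is never actually constructed. ``Move the rightmost movable vertex between $S$ and $w$'' is a heuristic, not a map: you do not specify the insertion position (beyond asserting uniqueness, which requires proof, since a new element can in general satisfy the $\nsucc$ and $\sim$-connectivity conditions at more than one slot or at none), you do not prove that a movable vertex always exists when $|S|\ge 1$ (you flag this yourself), and, most importantly, you never verify the involution property. ``Rightmost movable'' rules are notoriously fragile here: after moving $v^*$ from $S$ into $w$, a vertex $v'$ with $v'>v^*$ sitting in $w$ may newly become extractable precisely because $v^*$ now supplies the $\sim$-predecessor that some later $w_j$ needed, so $\iota$ applied again would pick $v'$ rather than $v^*$ and $\iota^2\neq\mathrm{id}$. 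You also leave the multiplicity $k$ in the diagonal term $(-1)^{k-1}k\,e_k^U$ unresolved. In short, the two items you label as ``main obstacle'' and ``secondary delicate point'' are exactly where the proof lives or dies, and they are not dispatched; as written this is a plausible plan, not a proof. To make it rigorous you would need to replace the rightmost-movable rule with an explicitly defined pivot that is provably stable under one application of the move (so the involution property holds by construction), prove the existence lemma, and give a separate counting lemma matching the factor $k$ to the $k$ positions at which the unique non-cancelled term on the $|S|=k$ boundary can be linearized.
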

Furthermore for any partition $\lambda=(\lambda_1,\ldots, \lambda_r)$ we have 
\begin{align}\label{powercorrect}
p_{\lambda}^{U} = \prod\limits_{i=1}^{r}p_{\lambda_i}^{U} = \prod\limits_{i=1}^{r}\sum\limits_{w \in C_{\lambda_i}^U}w_1 \cdot \ldots \cdot w_{\lambda_i} = \sum_{w \in C_{\lambda}^U}w_1 \cdot w_1 \cdot \ldots \cdot w_{N}
\end{align}
This leads to 
\begin{proposition}\thlabel{1coeff}
$c_N^U = \# C_N^U \geq 0$
\end{proposition}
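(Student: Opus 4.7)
The plan is to combine the three ingredients already in hand: the coefficient formula $c_\lambda^U = [m_\lambda^U, v_1 v_2 \cdots v_n]$ coming from Proposition \ref{Tcoef} with $\alpha \equiv 1$, the Szenes--Rok identity for $p_k^U$, and the trivial but essential observation that for the one-part partition $\lambda = (N)$ the monomial and power sum symmetric functions coincide.

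First I would unwind the definition of $m_{(N)}$. For the single-part partition $\lambda = (N)$ the monomial symmetric function is $m_{(N)} = \sum_i x_i^N$, which is exactly $p_N$. Since the Stanley $G$-homomorphism $\rho_U : \Lambda \to \Lambda_U$ is a ring homomorphism, the identity $m_{(N)} = p_N$ in $\Lambda$ passes to the equality $m_{(N)}^U = p_N^U$ in $\Lambda_U$.

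Next I invoke the Szenes--Rok description (Theorem \ref{paunov}):
\[
p_N^U = \sum_{w \in C_N^U} w_1 w_2 \cdots w_N.
\]
A sequence $w \in C_N^U$ consists of $N$ \emph{distinct} elements of $U$, and $U$ has exactly $N$ vertices; hence each $w$ is a permutation of $V(U) = \{v_1, \dots, v_N\}$, so every term in the sum contributes the same monomial $v_1 v_2 \cdots v_N$. Therefore
\[
p_N^U = (\#C_N^U)\, v_1 v_2 \cdots v_N + (\text{other monomials?}),
\]
and in fact there are no other monomials, because each $w$ produces precisely $v_1 \cdots v_N$.

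Finally, applying the formula from the previous subsection gives
\[
c_N^U = [m_N^U, v_1 v_2 \cdots v_N] = [p_N^U, v_1 v_2 \cdots v_N] = \#C_N^U,
\]
which is manifestly non-negative. There is no real obstacle here beyond keeping the bookkeeping straight: the only thing to verify carefully is that the length of $w$ matches the number of vertices of $U$, so that the monomial $w_1 \cdots w_N$ is forced to be the squarefree product of all vertices rather than some proper subset.
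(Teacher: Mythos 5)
Your proof is correct and follows essentially the same route as the paper: identify $m_{(N)} = p_N$, push through the $G$-homomorphism, and read off the coefficient of $v_1\cdots v_N$ from the Szenes--Rok expansion of $p_N^U$. The paper states this more tersely, but your extra observation—that each $w \in C_N^U$ is a permutation of all $N$ vertices, so every summand contributes exactly $v_1\cdots v_N$—is precisely the bookkeeping step the paper leaves implicit.
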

\begin{proof}
Using the identity $p_N = \sum\limits_{i = 1}^{\infty} x_i^N = m_N$ we apply the $G$-homomorphism to get $m_N^{U} = p_N^{U}$ and thus $c_N = [m_N^U, v_1 \cdot v_2 \cdot ... \cdot v_N] = \# C_N^U \geq 0$ by \ref{paunov}.
\end{proof}

\begin{theorem}[Szenes-Rok, \cite{szenesrok}]\label{thm:szenesrok} Let $U$ be a UIO on $N$ intervals. Then $\# E_k^U=\# C_k^U$ for any $k \le N$ and hence by \thref{1coeff}
\[
c^U_N =\# E_N^U
\]
\end{theorem}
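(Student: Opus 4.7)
The plan is to prove the bijective identity $\# E_k^U = \# C_k^U$ for every $k \le N$; the formula $c_N^U = \# E_N^U$ then follows immediately by combining this with \thref{1coeff}, which already establishes $c_N^U = \# C_N^U$ for $k=N$.

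To construct the bijection, I would exploit the cyclic symmetry of Eschers. Both $E_k^U$ and $C_k^U$ consist of sequences of $k$ distinct vertices of $U$ satisfying $v_i \nsucc v_{i+1}$ on consecutive pairs; the differences are that Eschers impose the cyclic closure $v_{k-1} \to v_0$, while correct sequences impose the connectivity condition that every $v_j$ (for $j \ge 1$) intersects some earlier $v_i$. Every cyclic rotation of an Escher satisfies the consecutive-pair condition, so the $k$ rotations $\zeta^j(v)$ are all candidates for correct sequences; they differ only in whether the connectivity condition holds. The key lemma to establish is that exactly one of the $k$ cyclic rotations of each $k$-Escher is a correct sequence, yielding a canonical map $\Phi : E_k^U \to C_k^U$. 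Conversely, given $w = [w_1, \ldots, w_k] \in C_k^U$, one must show that the cyclic closure $w_k \to w_1$ is forced by the connectivity condition together with the UIO geometry, producing a well-defined inverse $\Phi^{-1}$.

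The canonical rotation would be pinpointed by a combinatorial invariant of the Escher, for example by rotating so that $v_0$ is the unique ``leftmost'' vertex in a precise sense determined by the pattern of strict drops ($v_{i-1} \prec v_i$) and overlaps ($v_{i-1} \sim v_i$) around the cycle; if no strict drop exists the Escher supports a clique and every rotation qualifies, which must be handled separately. The main obstacle will be proving both the uniqueness of this rotation and the invertibility of $\Phi$: uniqueness must exclude multiple viable cut points, and the inverse must extract the cyclic relation $w_k \to w_1$ from the much weaker hypothesis that $w_k$ merely intersects some earlier $w_i$. Both steps will rely crucially on the $2+2$-free and $3+1$-free structure of UIOs, which restricts how intervals can simultaneously overlap several others and is the geometric input distinguishing UIOs from arbitrary interval graphs. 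I would expect to organize the argument by induction on $k$, perhaps by peeling off the rightmost interval of $U$ intersecting the sequence and reducing to a smaller correct sequence or Escher, tracking how the connectivity and closure conditions transform under this operation.
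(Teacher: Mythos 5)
The paper cites this as a result of Szenes--Rok \cite{szenesrok} without reproducing a proof, so there is no in-paper argument to compare against; what I can say is that your proposed bijection cannot work, because both halves of your key lemma fail already on the smallest nontrivial UIO. Take the $3$-interval UIO with area sequence $(0,0,1)$, i.e.\ $u_1\sim u_2$, $u_2\sim u_3$, $u_1\prec u_3$. Checking all six orderings of $\{u_1,u_2,u_3\}$ one finds
\[
E_3^U=\bigl\{[u_1,u_3,u_2],\;[u_3,u_2,u_1],\;[u_2,u_1,u_3]\bigr\},\qquad
C_3^U=\bigl\{[u_1,u_2,u_3],\;[u_2,u_1,u_3],\;[u_3,u_2,u_1]\bigr\}.
\]
The three Eschers form a single cyclic class, and \emph{two} of its three rotations are correct sequences, not one, so the claimed uniqueness of the ``good'' cut point is false (and not only in the clique case, which you flagged). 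Worse, $[u_1,u_2,u_3]\in C_3^U$ yet $u_3\succ u_1$, so the connectivity condition does \emph{not} force the cyclic closure $w_k\to w_1$: $C_k^U$ is not a subset of $E_k^U$, and there is no way to ``recover'' a cyclic Escher from a correct sequence by simply declaring the wrap-around edge present. Thus $E_3^U\ne C_3^U$ as sets (their intersection has two elements) even though both have cardinality three, and no rotation-based $\Phi:E_k^U\to C_k^U$ with a rotation-based inverse can exist. Any correct proof must transfer count between distinct cyclic classes --- in the example above, the ``extra'' correct sequence $[u_1,u_2,u_3]$ must be paired with the ``extra'' Escher rotation $[u_1,u_3,u_2]$ even though they do not lie in the same cyclic class --- and that is precisely what makes the identity nontrivial. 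Your remaining ingredients (induction on $k$ by peeling an extremal interval, exploiting $2+2$- and $3+1$-freeness) are reasonable raw material, but the central lemma as stated is false and must be replaced by a mechanism that does not attempt to cut each cyclic Escher at a unique canonical point.
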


\subsection{Escher-pairs} 
Let $\lambda=(n,k)$ be a fixed pair with $n+k\le N$. Apply the $G$-homomorphism to the identity
$$m_{(n,k)} = p_np_k - p_{n+k}$$
in $\Lambda$ and we get 
$$m_{(n,k)}^{U} = p_n^{U}p_k^{U} - p_{n+k}^{U}$$
Using Theorem \ref{thm:szenesrok} we get
\begin{equation}\label{2formula}
c_{(n,k)}^U = \#E_n^U \cdot \#E_k^U - \# E_{n+k}^U
\end{equation}
In fact, it is not hard to see that we can work with disjoint Escher pairs, hence we arrive at
\begin{theorem}[Stanley coeff formula for pairs]\label{thm:pairformula}
 \[c_{(n,k)}^U = \#E_{n,k}^U - \# E_{n+k}^U\]  
\end{theorem}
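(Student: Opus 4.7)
The plan is to apply Stanley's $G$-homomorphism to a symmetric-function identity expressing $m_{(n,k)}$ in terms of power-sum symmetric functions, and then to extract the coefficient of $v_1 \cdots v_N$ combinatorially via \thref{paunov}. The case $n \ne k$ is the cleanest; when $n=k$, the identity $m_{(k,k)} = \tfrac{1}{2}(p_k^2 - p_{2k})$ introduces an extra factor of two that must be reconciled with the ordered/unordered convention for $E_{k,k}^U$, but the argument is otherwise identical, so I describe the case $n \ne k$.

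First, apply $\rho_U$ to the identity $m_{(n,k)} = p_n p_k - p_{n+k}$ in $\Lambda$ to obtain
\[
m_{(n,k)}^U = p_n^U p_k^U - p_{n+k}^U.
\]
Using the formula $c_\lambda^U = [m_\lambda^U, v_1 \cdots v_N]$ derived earlier in the excerpt from \thref{Tcoef} with $\alpha \equiv 1$ (taking $N = n+k$), extracting the coefficient of $v_1 \cdots v_N$ on both sides gives
\[
c_{(n,k)}^U = [p_n^U p_k^U,\, v_1 \cdots v_N] - [p_{n+k}^U,\, v_1 \cdots v_N].
\]

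Next I evaluate the two coefficients using \thref{paunov}. From $p_{n+k}^U = \sum_{w \in C_{n+k}^U} w_1 \cdots w_{n+k}$, and the fact that every $w \in C_{n+k}^U$ has $n+k = N$ distinct entries and therefore contributes the monomial $v_1 \cdots v_N$ exactly once, $[p_{n+k}^U, v_1 \cdots v_N] = \#C_{n+k}^U$, which equals $\#E_{n+k}^U$ by Theorem \ref{thm:szenesrok}. Similarly, $p_n^U p_k^U = \sum_{(u,v) \in C_n^U \times C_k^U} (u_1 \cdots u_n)(v_1 \cdots v_k)$, and a pair $(u,v)$ contributes the monomial $v_1 \cdots v_N$ precisely when the vertex sets of $u$ and $v$ are disjoint and their union is all of $U$. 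Hence $[p_n^U p_k^U, v_1 \cdots v_N]$ equals the number of vertex-disjoint pairs in $C_n^U \times C_k^U$.

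The final step translates vertex-disjoint pairs of correct sequences into vertex-disjoint Escher pairs. For this I use the following vertex-set-preserving refinement of Theorem \ref{thm:szenesrok}: for every $S \subseteq U$ with $|S| = j$, the Szenes--Rok bijection $C_j^U \leftrightarrow E_j^U$ restricts to a bijection between length-$j$ correct sequences with vertex set $S$ and $j$-Eschers with vertex set $S$. Their bijection is built from cyclic rotations and local reorderings of a fixed tuple, operations that never alter the underlying set of entries, so the refinement falls out of unpacking their construction. Applying this refined bijection componentwise to a vertex-disjoint pair $(u,v)$ produces an element of $E_{n,k}^U$, and conversely, giving $[p_n^U p_k^U, v_1 \cdots v_N] = \#E_{n,k}^U$ and hence the claimed formula.

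The principal obstacle is the vertex-set-preserving refinement of Theorem \ref{thm:szenesrok}. The raw cardinality identity $\#C_j^U = \#E_j^U$ is not enough, since I need to compare the restricted products on the disjoint locus rather than full Cartesian products. Verifying that Szenes--Rok's explicit bijection sends length-$j$ correct sequences with vertex set $S$ bijectively onto $j$-Eschers with vertex set $S$ (for every $S$) is the one genuinely non-mechanical step; everything else is coefficient bookkeeping.
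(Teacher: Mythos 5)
Your proposal takes essentially the same route as the paper: apply Stanley's $G$-homomorphism to the power-sum identity $m_{(n,k)} = p_n p_k - p_{n+k}$, and then use the Szenes--Rok correct-sequence expansion of $p_j^U$ to read off the coefficient of $v_1\cdots v_N$. In fact you are more careful than the paper's own sketch: you correctly observe that the disjointness constraint on the pair $(u,v)$ comes \emph{automatically} from extracting the square-free monomial $v_1\cdots v_N$ from $p_n^U p_k^U$ (the paper's intermediate Eq.~\eqref{2formula} writes the unrestricted product $\#E_n^U\cdot \#E_k^U$ and then waves at disjointness with ``it is not hard to see''), and you are right to flag the factor-of-two discrepancy in the case $n=k$, where $m_{(k,k)}=\tfrac12(p_k^2-p_{2k})$ and $E_{k,k}^U$ as defined counts \emph{ordered} disjoint pairs. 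The only place your argument is not self-contained is the vertex-set-preserving refinement of Theorem~\ref{thm:szenesrok}: you justify it by appealing to the internal structure of the Szenes--Rok bijection, which is not reproduced in this paper. A cleaner route, needing nothing beyond the statement of Theorem~\ref{thm:szenesrok} itself, is to apply it to each induced sub-UIO $U|_S$ with $|S|=n$: since the relations $\sim,\prec$ are induced, a length-$n$ correct sequence (resp.\ $n$-Escher) of $U$ with vertex set exactly $S$ is literally the same as one of $U|_S$, and $\#C_n^{U|_S}=\#E_n^{U|_S}$ gives the refinement for free, without unpacking any explicit bijection.
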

To prove non-negativity of $c_{(n,k)}^U$, Szenes and Rok  \cite{szenesrok} construct an injective function 
$$\phi:E_{n+k}^U  \to E_{(n,k)}^U$$
We go through the key steps of the proof to show the intuition behind our core representations $\tau_k$. 
The idea is to find two functions 
\[\phi:E_{n+k}^U  \to E_{(n,k)}^U,\ \  \psi:E_{n,k}^U  \to E_{n+k}^U \text{ such that } \psi \circ \phi=id\] 
Let $w \in E_{n+k}^U$. By Proposition \ref{thereissubescher}, $w$ has a first splitting point $L = \mathbf{SEStart}(w,k)$, so that $v = [w_{L+1}, ..., w_{L+k}]$ and $u = [w_{L+k+1}, ..., w_{L+k+n}]$ are $k$ and $n$ Eschers respectively. It's natural to define $\phi$ as $w \mapsto (u,v)$, but the problem is that it is not always the case that $(u,v)$ has an insertion point which we could use to concatenate $u$ and $v$ to define $\psi$. But it turns out that there are cyclic permutations $u' \in [u]$, $v' \in [v]$ s.t. $(u', v')$ has an insertion point. Namely, pick $u' \in [u]$ s.t. $u'_L = u_{n-1}$ and $v' \in [v]$ s.t. $v'_{L} = v_{k-1}$, i.e. 
\begin{equation}\label{rotate}
u'_i = u_{i+n-L-1} \text{ and }v'_i = v_{i+k-L-1}
\end{equation}
Define 
\[\phi:E_{n+k}^U \to E_{n,k}^U, \ \ \ \phi(w) = (u',v')\]. 
Then we prove that 
\begin{lemma}\thlabel{back}
For any $w \in E_{n+k}^U$ the pair $(u', v') = \phi(w)$ has an insertion point at $L=\mathbf{SEStart}(w,k)$. Furthermore $u'+_L v' \in [w]$
\end{lemma}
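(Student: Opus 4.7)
The plan is to directly unfold the definitions. The cyclic rotations in \eqref{rotate} are designed so that $u'_{L \bmod n} = u_{n-1}$ and $v'_{L \bmod k} = v_{k-1}$; using the $w$-labelling $u_j = w_{L+k+1+j}$ and $v_j = w_{L+1+j}$ (with $w$-indices taken modulo $n+k$), these are precisely $w_L$ and $w_{L+k}$, the final elements of $u$ and $v$ in the linear $w$-ordering. This alignment is the substantive observation behind the lemma; both claims then fall out by direct calculation.

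For the insertion point at $L$, I would compute the four values appearing in the condition:
\begin{align*}
u'_{L \bmod n} &= u_{n-1} = w_L, & v'_{(L+1)\bmod k} &= v_0 = w_{L+1}, \\
v'_{L \bmod k} &= v_{k-1} = w_{L+k}, & u'_{(L+1)\bmod n} &= u_0 = w_{L+k+1}.
\end{align*}
Then the two requirements $u'_L \to v'_{L+1}$ and $v'_L \to u'_{L+1}$ reduce to the Escher-edges $w_L \to w_{L+1}$ and $w_{L+k} \to w_{L+k+1}$ of $w$, which hold because $w \in E_{n+k}^U$. Hence $L \in \mathbf{I}(u',v')$.

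For $u' +_L v' \in [w]$ I would split on the two cases of the definition of $+_L$. When $0 \le L \le n-1$, the first-case formula, combined with the block-by-block identities $u'_i = w_i$ for $0 \le i \le L$ (from $u_{i+n-L-1}=w_{n+k+i}\equiv w_i$), $v'_{L+j} = w_{L+j}$ for $1 \le j \le k$ (from $v_{j-1}=w_{L+j}$), and $u'_{L+j} = w_{L+k+j}$ for $1 \le j \le n-L-1$ (from $u_{j-1}=w_{L+k+j}$), gives $u' +_L v' = [w_0,w_1,\ldots,w_{n+k-1}] = w$ on the nose. When $n \le L \le n+k-1$, the unique $q \in (L-n,L]$ with $n \mid q$ is $q = n\lfloor L/n \rfloor$, and an analogous block-by-block computation, now using $u'_q = u'_0 = w_q$ (which follows from $L+k+1 + ((n-L-1)\bmod n) \equiv q \pmod{n+k}$), shows $u' +_L v'$ to equal $[w_q, w_{q+1}, \ldots, w_{q+n+k-1}]$ with indices reduced modulo $n+k$, i.e.\ the cyclic rotation of $w$ beginning at $w_q$, hence in $[w]$.

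The main obstacle is the simultaneous bookkeeping of three moduli: $n$ for $u'$, $k$ for $v'$, and $n+k$ for $w$. The role of $q$ in the second case is exactly to bridge the mod-$n$ indexing of $u'$ with the linear ordering of the concatenation so that the output starts at a well-defined position in $w$; once this is set up, everything telescopes from the key identities $u'_{L\bmod n}=w_L$ and $v'_{L\bmod k}=w_{L+k}$.
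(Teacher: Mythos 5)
Your argument is correct and follows essentially the same route as the paper: the insertion-point claim reduces, via the identities $u'_L = w_L$, $v'_{L+1} = w_{L+1}$, $v'_L = w_{L+k}$, $u'_{L+1} = w_{L+k+1}$, to the Escher edges of $w$, and the concatenation claim is verified by unwinding the index shifts in \eqref{rotate}. The only cosmetic difference is that you split into the cases $L < n$ and $L \ge n$ and pin down the explicit rotation offset $q$, whereas the paper treats both cases at once and stops at cyclic equivalence $u'+_L v' \sim [w_{L+1}, \ldots, w_{L+k+n}]$ without naming the starting index.
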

\begin{proof}

$u'$ and $v'$ can be concatenated at $L$ since
\begin{align*}
u'_L = u_{n-1} = w_{L+k+n} = w_L \rightarrow w_{L+1} = v_0 = v'_{L+1}, \\ 
v'_L = v_{k-1} = w_{L+k} \rightarrow w_{L+k+1} = u_0 = u'_{L+1}
\end{align*}
and $u'+_L v'$ is the same cycle-type as $w$ since
\begin{align*}
u'+_L v' = [u'_q, ..., u'_{L}, v'_{L+1}, ..., v'_{L+k}, u'_{L+1}, ..., u'_{q-1}] \\
= [u_{q-L-1}, ..., u_{n-1}, v_0, ..., v_{k-1}, u_0, ..., u_{q-L-2}] \\
\sim [v_0, ..., v_{k-1}, u_0, ..., u_{n-1}] = [w_{L+1}, ..., w_{L+k}, w_{L+k+1}, ..., w_{L+k+n}] \in [w]
\end{align*}
\end{proof}
Now let's try to construct $\psi$, we already know $u'+_L v' \sim w$, so for them to be equal, we only need their starting point to be the same. The first element of $u'+_L v'$ will by definition always be from $u'$, i.e. from $u$, but depending on where the $k$-subescher $v$ starts in $w$, it could be that $w_0 \in v$. So if we want to recover $w$ from $u'$ and $v'$ we need to change the starting point of $u'+_L v'$ in one of the 2 cases. 

To find the criterion to decide in what case we are, note that $u_i = w_{L+k+1+i}$ for $0 \leq i < n$ and $v_i = w_{L+1+i}$ for $0 \leq i < k$. So $w_0 \in u$ can only happen if $u_{n-L-1} = w_0$, so the criterion is $n-L-1 \in [0, n-1] \iff 0 \leq L \leq n-1$. Similarly we get $w_0 \in v \iff v_{n+k-L-1}=w_0 \iff n \leq L < n+k$.

If $L < n$ we get by (\ref{rotate}) that 
$$(u' +_L v', L)_0 = u'_0 = u_{L-n-1} = w_0 \Rightarrow u' +_L v' = w$$

If $L \geq n$ we get by (\ref{rotate}) that 
$$(u' +_L v')^{v'_n}_0 =  v'_n = v_{n+k-L-1} = w_0 \Rightarrow (u' +_L v')^{v'_n} = w$$ 

Our discussion can be summarized in
\begin{lemma}\thlabel{backer}
Let $w \in E_{n+k}$,$(u,v) = \phi(w)$ and $L = FS(w)$ then $$w = \begin{cases} 
u +_L v & \text{if } L < n \\
(u +_L v)^{v_n} & \text{if } L \geq n
\end{cases}$$
\end{lemma}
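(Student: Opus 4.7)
The plan builds on Lemma \thref{back}, which already gives $u +_L v \in [w]$ --- that is, the linear sequence $u +_L v$ is cyclically equivalent to $w$. What remains is to pin down the correct cyclic representative, and this reduces entirely to locating the position of $w_0$ inside $u +_L v$.

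First, I would unpack the definition of $\phi$: the sub-Escher decomposition at $L = \mathbf{SEStart}(w,k)$ produces the underlying (un-rotated) Eschers $\tilde u$ and $\tilde v$ with $\tilde u_i = w_{L+k+1+i}$ and $\tilde v_i = w_{L+1+i}$ (indices read modulo $n+k$). A direct case analysis on the residue of $-1$ mod $n+k$ then shows that $w_0 = \tilde u_{n-L-1}$ when $0 \le L < n$, and $w_0 = \tilde v_{n+k-L-1}$ when $n \le L < n+k$.

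Next, I would apply the rotation formulas (\ref{rotate}), $u_i = \tilde u_{i + n - L - 1}$ and $v_i = \tilde v_{i + k - L - 1}$ (modulo $n$ and $k$ respectively). In the case $L < n$ this gives $u_0 = \tilde u_{n-L-1} = w_0$; since the first entry of $u +_L v$ is by construction $u_0$, the two length-$(n+k)$ sequences $u +_L v$ and $w$ start with the same element, and by the cyclic equivalence from Lemma \thref{back} they must coincide on the nose. In the case $L \ge n$ the first entry of $u +_L v$ no longer equals $w_0$, but one checks $v_n = \tilde v_{n+k-L-1} = w_0$, so rotating the Escher $u +_L v$ to the representative beginning at $v_n$ --- i.e.\ passing to $(u +_L v)^{v_n}$ --- recovers $w$.

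The main obstacle is bookkeeping across three modular arithmetics simultaneously (modulo $n$ for $u$, modulo $k$ for $v$, and modulo $n+k$ for $w$), compounded by the fact that for $L \ge k$ the concatenation $u +_L v$ itself carries an auxiliary shift $q$ with $k \mid q$ and $q \in (L-k, L]$; one must verify the two starting-point identifications are unaffected by this choice of $q$. Once the index conventions are lined up, the two case verifications are immediate substitutions.
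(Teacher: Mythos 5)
Your argument is correct and follows essentially the same route the paper takes: invoke Lemma \thref{back} for the cyclic equivalence $u+_L v \in [w]$, express the un-rotated sub-Eschers entrywise in terms of $w$ to locate $w_0$ (landing in $u$ when $L<n$ and in $v$ when $L\ge n$), then apply the rotation formulas \eqref{rotate} to see that $u_0 = w_0$ in the first case and $v_n = w_0$ in the second, which pins down the cyclic representative. The only slip is notational: in your final paragraph the auxiliary shift $q$ in $u+_L v$ satisfies $n\mid q$ with $q\in(L-n,L]$ (the modulus is the length of the \emph{first} Escher $u$, here $n$), not $k\mid q$; this does not affect the substance of the argument.
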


This inspires our definition of $\psi:$
\begin{definition}
Fix some dummy $w_0 \in E_{n+k}$, define $\psi: E_{n,k} \to E_{n+k}$ as 
$$\psi((u,v)) = 
\begin{cases} 
w_0 & \text{if } (u,v) \text{ has no insertion point} \\
u +_{FI(u,v)} v & \text{if } FI(u,v) < n \\
(u +_{FI(u,v)} v)^{v_n} & \text{if } FI(u,v) \geq n
\end{cases}$$
\end{definition}

\begin{theorem}\thlabel{id}
$\psi \circ \phi$ is the identity
\end{theorem}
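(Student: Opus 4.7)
The plan is to reduce $\psi\circ\phi=\mathrm{id}$ to the single claim
\[
\mathbf{FirstIns}(u',v')=L,
\]
where I abbreviate $L=\mathbf{SEStart}(w,k)$ and $(u',v')=\phi(w)$. Once this equality is in hand, the case split $L<n$ vs.\ $L\geq n$ in Lemma \thref{backer} coincides exactly with the split used in the definition of $\psi$, so $\psi(\phi(w))$ reduces to the right-hand side of \thref{backer}, which is $w$.

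The first step is free: Lemma \thref{back} supplies $L\in \mathbf{I}(u',v')$, and hence $\mathbf{FirstIns}(u',v')\leq L$. The substance is the reverse inequality.

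For this, I would translate the insertion conditions at an arbitrary index $i\in\{0,\dots,L-1\}$ back into conditions on $w$. Starting from $u_j=w_{L+k+1+j}$ and $v_j=w_{L+1+j}$ (with subscripts of $w$ read modulo $n+k$) and applying the rotation rules \eqref{rotate}, a direct index-chase shows that in the range $0\leq i\leq L-1$ one has $u'_i=w_i$, $u'_{i+1}=w_{i+1}$, $v'_i=w_{i+k}$, $v'_{i+1}=w_{i+1+k}$. The insertion conditions $u'_i\rightarrow v'_{i+1}$ and $v'_i\rightarrow u'_{i+1}$ then read
\[
w_i\rightarrow w_{i+1+k},\qquad w_{i+k}\rightarrow w_{i+1},
\]
which are precisely the two conditions needed for $i$ to be a splitting point of $w$ separating the $k$-subescher $[w_{i+1},\dots,w_{i+k}]$ from its complement. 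Because $L$ is by definition the smallest splitting point of $w$, no $i<L$ can be an insertion point of $(u',v')$, giving $\mathbf{FirstIns}(u',v')\geq L$. Combining with the first step yields the required equality.

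The hard part will be the bookkeeping in the index-chase. One has to track three overlapping cyclic reparametrizations (those of $u$, of $v$, and of the composite cycle $w$), and the explicit formulas for $u'_i,v'_i$ in terms of $w$ change case depending on whether the relevant index falls before or after the wrap-around point determined by $L$; a further regime appears when $L\geq n$, which is precisely the regime that activates the $v_n$-rotation in \thref{backer}. In each of these regimes the same translation delivers the equivalence between the insertion condition at $i$ and the splitting-point condition at $i$ in $w$, so the argument is conceptually uniform even though it branches into cases. With $\mathbf{FirstIns}(u',v')=L$ secured, Lemma \thref{backer} supplies the final identification $\psi(\phi(w))=w$.
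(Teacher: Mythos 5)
The paper does not actually supply a proof of this theorem; it is stated directly after the definition of $\psi$ and is implicitly presented as a consequence of Lemma~\thref{backer}. You are therefore filling a gap rather than reproducing an argument, and your reduction is the right one: you correctly observe that everything hinges on the single identity $\mathbf{FirstIns}(u',v')=L$, since once that is known the case split $L<n$ versus $L\ge n$ in the definition of $\psi$ matches the one in Lemma~\thref{backer}, and $\psi(\phi(w))=w$ follows. The inequality $\mathbf{FirstIns}(u',v')\le L$ is indeed immediate from Lemma~\thref{back}.

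The reverse inequality is where the argument breaks down. The index identities you assert, namely $u'_i=w_i$, $v'_i=w_{i+k}$, $v'_{i+1}=w_{i+k+1}$, do \emph{not} hold for all $i\in\{0,\dots,L-1\}$. From $v'_i=v_{(i+k-1-L)\bmod k}$ and $v_m=w_{L+1+m}$ one gets $v'_i=w_{i+ak}$ where $a=-\lfloor (i-L-1)/k\rfloor$; this equals $w_{i+k}$ only when $L-k+1\le i\le L$. For $i\le L-k$ the shift is by a larger multiple of $k$, and the translated conditions no longer reproduce the splitting conditions at $i$. Concretely, for $L\ge k$ and $i=L-k$ one computes $v'_{L-k}=v_{k-1}=w_{L+k}$ and $v'_{L-k+1}=v_0=w_{L+1}$, so (assuming $L<n$ so that $u'_{L-k}=w_{L-k}$, $u'_{L-k+1}=w_{L-k+1}$) the insertion conditions at $L-k$ read
\[
w_{L-k}\to w_{L+1}\quad\text{and}\quad w_{L+k}\to w_{L-k+1},
\]
whereas the splitting conditions at $m=L-k$ read
\[
w_{L-k}\to w_{L+1}\quad\text{and}\quad w_{L}\to w_{L-k+1}.
\]
Only the first condition coincides; the second conditions involve different vertices, and the minimality of $L$ as a splitting point gives you no control over $w_{L+k}\to w_{L-k+1}$. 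A symmetric failure occurs for $u'_i$ when $L\ge n$. So the claim that ``in each of these regimes the same translation delivers the equivalence between the insertion condition at $i$ and the splitting-point condition at $i$'' is simply false for $i\le L-k$, and the proof of $\mathbf{FirstIns}(u',v')\ge L$ does not go through as written. What is correct is the range $\max(0,L-k+1)\le i\le L-1$; for $i\le L-k$ you would need a genuinely different argument (or a different way to rule out early insertion points), not just more bookkeeping.
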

We have the following characterization of $E_{n,k} \setminus im(\phi)$:
\begin{corollary}
The complement of the image of $\phi$ can be seen as
\begin{align*}
im(\phi)^C = \set{(u,v) \in E_{n,k}}{(u,v) \text{ has no insertion point or } \phi \circ \psi ((u,v)) \neq (u,v) }
\end{align*}
\end{corollary}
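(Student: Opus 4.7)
The plan is to prove the set equality by double inclusion, using the characterization $\psi \circ \phi = \mathrm{id}$ from Theorem \ref{id} together with \thref{back}. I will phrase things in terms of the complementary statement: a pair $(u,v) \in E_{n,k}^U$ lies in $\mathrm{im}(\phi)$ if and only if it has an insertion point and $\phi \circ \psi((u,v)) = (u,v)$.

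For the forward inclusion (that $\mathrm{im}(\phi) \subseteq \{(u,v): \mathbf{I}(u,v)\neq\emptyset\text{ and }\phi\circ\psi((u,v))=(u,v)\}$), suppose $(u,v) = \phi(w)$ for some $w \in E_{n+k}^U$. By \thref{back}, the pair $\phi(w) = (u',v')$ has an insertion point at $L = \mathbf{SEStart}(w,k)$, so in particular $\mathbf{I}(u,v) \neq \emptyset$. Applying $\psi \circ \phi = \mathrm{id}$ (Theorem \ref{id}) gives $\psi((u,v)) = \psi(\phi(w)) = w$, and therefore $\phi(\psi((u,v))) = \phi(w) = (u,v)$, as required.

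For the reverse inclusion, suppose $(u,v) \in E_{n,k}^U$ satisfies $\mathbf{I}(u,v)\neq\emptyset$ and $\phi\circ\psi((u,v))=(u,v)$. Then $(u,v)$ is by definition equal to $\phi(w)$ where $w := \psi((u,v)) \in E_{n+k}^U$ (and $w$ is not the dummy $w_0$-branch precisely because $(u,v)$ has an insertion point, so the formula for $\psi$ in the second or third case applies and genuinely produces an element of $E_{n+k}^U$). Hence $(u,v) \in \mathrm{im}(\phi)$. Taking complements of the equivalence $(u,v)\in\mathrm{im}(\phi)\Leftrightarrow \mathbf{I}(u,v)\neq\emptyset \text{ and } \phi\circ\psi((u,v))=(u,v)$ then yields exactly the stated description of $\mathrm{im}(\phi)^C$ (via the de Morgan law: the negation is "no insertion point, or insertion point exists but $\phi\circ\psi$ fails to be the identity on $(u,v)$", which merges into the single disjunction as written).

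The proof is short because the hard work has already been done in \thref{back} and Theorem \ref{id}. The only subtlety I anticipate is bookkeeping around the definition of $\psi$: one must verify that $\psi$ maps a pair with an insertion point into $E_{n+k}^U$ (not the dummy $w_0$), which is immediate from the case distinction in the definition. I do not expect any real obstacle here; the statement is essentially a reformulation of the fact that $\phi$ is injective with left inverse $\psi$, and $\phi\circ\psi$ is a retraction onto $\mathrm{im}(\phi)$.
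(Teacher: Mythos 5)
Your proof is correct and takes essentially the same approach as the paper: both rest on the retraction identity $\psi\circ\phi=\mathrm{id}$ from Theorem \ref{id} together with Lemma \thref{back}. You are slightly more explicit than the paper about the ``no insertion point'' branch of the disjunction (which is actually subsumed by the $\phi\circ\psi\neq\mathrm{id}$ condition, since $\phi(w_0)$ always has an insertion point), but the underlying argument is the same.
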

\begin{proof}
Let $(u,v) \in im(\phi)$, pick $w \in E_{n+k}$ s.t. $\phi(w) = (u,v)$ then $\phi \circ \psi (u,v) = \phi(w) = (u,v)$. On the other hand $\phi \circ \psi ((u,v)) = (u,v) \in im(\phi)$.
\end{proof}

\subsection{Escher-tuples} 
Let $\lambda=(\lambda_1,\ldots, \lambda_r)$ a partition. 
Using Stanley's G-homomorphism, we can come up with a similar formula for $c_\lambda^U$ using Escher-tuples as in to that in Theorem \ref{thm:pairformula}. Indeed, the symmetric polynomial $m_{(\lambda_1,\ldots, \lambda_r)} \in \Lambda$ can be uniquely written in the total sum basis as   
$$m_{(\lambda_1,\ldots, \lambda_r)} = \sum_{\tau} d_{\tau}^\lambda p_\tau$$
with integer coefficients $d_\tau^\lambda$. Applying the $G$-homomorphism we obtain
$$m_{(\lambda_1,\ldots, \lambda_r)}^U = \sum_{\tau} d_{\tau}^\lambda p_\tau^U$$
and by Theorem \ref{thm:szenesrok} again, we get
\begin{theorem}[Stanley coeff formula]\label{thm:coefformula}
 \[c_{\lambda}^U = \sum_{\tau} d_{\tau}^\lambda \cdot \# E_\tau^U.\]  
\end{theorem}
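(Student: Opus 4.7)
The plan is to chain together three facts already established in this section: the identity $c_\lambda^U = [m_\lambda^U, v_1 v_2 \cdots v_N]$ (an instance of \thref{Tcoef} with $\alpha \equiv 1$, as noted right after Stanley's s/e-positivity theorem), the correct-sequence expansion
\[
p_\tau^U \;=\; \sum_{w \in C_\tau^U} w_1 \cdot w_2 \cdots w_N
\]
from \eqref{powercorrect}, and the Szenes--Rok identity $\#E_k^U = \#C_k^U$ from Theorem \ref{thm:szenesrok}.

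First, I would apply Stanley's $G$-homomorphism to the $\mathbb{Z}$-linear expansion $m_\lambda = \sum_\tau d_\tau^\lambda p_\tau$ in $\Lambda$ and then read off the coefficient of $v_1 v_2 \cdots v_N$. Since $f \mapsto [f^U, v_1 \cdots v_N]$ is a $\mathbb{Z}$-linear functional on $\Lambda_U$, it commutes with the finite sum and yields
\[
c_\lambda^U \;=\; [m_\lambda^U, v_1\cdots v_N] \;=\; \sum_\tau d_\tau^\lambda \, [p_\tau^U, v_1\cdots v_N].
\]
The theorem therefore reduces to proving $[p_\tau^U, v_1\cdots v_N] = \#E_\tau^U$ for every partition $\tau$ of $N$.

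Next, plugging in \eqref{powercorrect}, the coefficient $[p_\tau^U, v_1\cdots v_N]$ counts exactly those $\tau$-concatenations $w = w^{(1)} w^{(2)} \cdots w^{(r)}$ with $w^{(i)} \in C_{\tau_i}^U$ whose underlying vertex multiset is $\{v_1,\ldots,v_N\}$. Because each $w^{(i)}$ already has distinct entries, such $w$ correspond bijectively to $r$-tuples of \emph{pairwise disjoint} correct sequences of lengths $\tau_1,\ldots,\tau_r$ whose vertex sets partition $V(U)$. This rewriting turns the algebraic coefficient into a combinatorial count on disjoint correct tuples.

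The main step is then to upgrade Theorem \ref{thm:szenesrok} from the single-Escher equality to a tuple-level bijection between $E_\tau^U$ and the set of disjoint $\tau$-tuples of correct sequences partitioning $V(U)$. The natural route is to apply the Szenes--Rok bijection block by block: given a disjoint Escher tuple $(u^{(1)},\ldots,u^{(r)})$, restrict $U$ to the vertex set $V(u^{(i)})$ (which inherits a UIO structure because $\prec, \succ, \to$ are intrinsic pairwise relations on the intervals), apply Theorem \ref{thm:szenesrok} on each sub-UIO, and concatenate. The main (essentially technical) obstacle I anticipate is checking that the Szenes--Rok bijection is invariant under restriction to sub-UIOs, so that the block-by-block procedure is well-defined and invertible on tuples; this should be a functoriality property read off from the construction in \cite{szenesrok}. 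Once this is in place, combining the two displays above gives $c_\lambda^U = \sum_\tau d_\tau^\lambda \, \#E_\tau^U$, as required.
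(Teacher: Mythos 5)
Your proposal follows the same route as the paper: expand $m_\lambda$ in the power-sum basis as $m_\lambda = \sum_\tau d_\tau^\lambda p_\tau$, push through Stanley's $G$-homomorphism, extract the coefficient of $v_1\cdots v_N$, and convert power-sum coefficients to Escher counts via Theorem~\ref{thm:szenesrok}. The paper states this in a single line (``and by Theorem~\ref{thm:szenesrok} again, we get\dots''), whereas you correctly isolate the step it glosses over: $[p_\tau^U, v_1\cdots v_N]$ counts \emph{disjoint} $\tau$-tuples of correct sequences partitioning $V(U)$, and one must show this equals $\#E_\tau^U$, not merely invoke the single-$k$ equality $\#E_k^U = \#C_k^U$. (Indeed, the paper's intermediate display~\eqref{2formula} with $\#E_n^U\cdot\#E_k^U$ quietly slides past exactly this point before Theorem~\ref{thm:pairformula} restores disjointness.)

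Your block-by-block argument is the right way to close the gap, but the ``functoriality of the Szenes--Rok bijection'' you worry about is a red herring: no compatibility between bijections across different $U$ is required. Fix an ordered set-partition $(V_1,\ldots,V_r)$ of $V(U)$ with $|V_i|=\tau_i$. Each $U|_{V_i}$ is again a UIO, and, by intrinsicness of $\prec,\succ,\to$, the $\tau_i$-Eschers (resp.\ length-$\tau_i$ correct sequences) of $U$ supported on $V_i$ are exactly the $\tau_i$-Eschers (resp.\ correct sequences) of $U|_{V_i}$. Applying Theorem~\ref{thm:szenesrok} to the small UIO $U|_{V_i}$ with $k=\tau_i=|V_i|$ gives $\#E_{\tau_i}^{U|_{V_i}}=\#C_{\tau_i}^{U|_{V_i}}$, a pure cardinality identity. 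Summing the product of these counts over all such set-partitions immediately yields $\#E_\tau^U = \#\{\text{disjoint correct $\tau$-tuples covering }V(U)\}$, which is all you need. With that simplification your argument is complete and, modulo detail, is the argument the paper intends.
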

We state this formula for triple and quadruple partitions as corollary, for reference.
\begin{corollary}[Stanley triple coeff formula]\label{thm:tripleformula}
 \[c_{(n,k,l)}^U = \#E_{(n,k,l)}^U+ 2\cdot \#E_{(n+k+l)}^U-\#E_{(n+k,l)}^U - \#E_{(n+l,k)}^U - \#E_{(k+l,n)}^U \]  
\end{corollary}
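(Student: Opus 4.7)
The plan is to treat this corollary as a direct specialization of Theorem~\ref{thm:coefformula}, so the only real work is computing the transition coefficients $d_\tau^{(n,k,l)}$ that express $m_{(n,k,l)}$ in the power-sum basis. Once this expansion is in hand, applying Stanley's $G$-homomorphism and extracting the coefficient of $v_1\cdots v_N$ is mechanical, and Theorem~\ref{thm:szenesrok} (together with the disjointness convention already baked into the notation $E_\lambda^U$) converts each $\#C_\tau^U$ into $\#E_\tau^U$.

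First I would derive the identity
\[
m_{(n,k,l)} = p_n p_k p_l - p_{n+k} p_l - p_{n+l} p_k - p_{k+l} p_n + 2\, p_{n+k+l}
\]
by inclusion--exclusion on the triple of indices. Writing $p_n p_k p_l = \sum_{i_1,i_2,i_3} x_{i_1}^n x_{i_2}^k x_{i_3}^l$, I would split the sum according to which indices coincide: the fully distinct indices contribute exactly $m_{(n,k,l)}$ (assuming $n,k,l$ are distinct); each of the three single-collision patterns $i_a=i_b\neq i_c$ contributes $p_{a+b}p_c - p_{n+k+l}$ via the pair identity $\sum_{i\ne j}x_i^A x_j^B = p_A p_B - p_{A+B}$; and the fully collapsed pattern $i_1=i_2=i_3$ contributes $p_{n+k+l}$. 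Summing the five pieces and rearranging yields the displayed expansion, with the surprising $+2\,p_{n+k+l}$ arising precisely as $-3+1$ from the three single-collision corrections plus the triple-collision term.

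Next I would apply $\rho_G$ term by term to obtain
\[
m_{(n,k,l)}^U = p_n^U p_k^U p_l^U - p_{n+k}^U p_l^U - p_{n+l}^U p_k^U - p_{k+l}^U p_n^U + 2\, p_{n+k+l}^U,
\]
and extract the coefficient of $v_1\cdots v_N$ where $N=n+k+l$. By equation \eqref{powercorrect}, the coefficient of $v_1\cdots v_N$ in $p_\tau^U$ counts tuples of correct sequences of the appropriate lengths whose vertex sets partition $V(U)$, i.e.\ disjoint tuples; that is, $[p_\tau^U, v_1\cdots v_N]=\#C_\tau^U$ in the disjoint sense. Theorem~\ref{thm:szenesrok}, applied componentwise, converts each disjoint-$C_\tau$ count into a disjoint-$E_\tau$ count, and combining this with the formula $c_\lambda^U=[m_\lambda^U, v_1\cdots v_N]$ gives exactly the claimed identity.

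The main obstacle, and the only one that needs attention, is the bookkeeping in step~one: the coefficient $+2$ in front of $p_{n+k+l}$ is easy to miscount, and one should also verify that the formula is stable under coincidences among the parts $n,k,l$ (the inclusion--exclusion naturally produces the correct symmetric-function identity in $\Lambda$ regardless of whether the parts are distinct, since both sides are polynomial identities among the $p_i$'s). Once the expansion is correct, the passage to $E_\tau^U$ via Theorem~\ref{thm:szenesrok} and \eqref{powercorrect} is immediate, and no further combinatorial input is needed.
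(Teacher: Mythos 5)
Your proposal follows the same route the paper has in mind: it is precisely Theorem~\ref{thm:coefformula} specialized to $r=3$, with the transition coefficients $d_\tau^\lambda$ computed explicitly via inclusion--exclusion on index collisions, followed by the $G$-homomorphism, extraction of the coefficient of $v_1\cdots v_N$ via \eqref{powercorrect}, and conversion to Escher counts via Theorem~\ref{thm:szenesrok}. The five-way case analysis of collision patterns and the resulting expansion $m_{(n,k,l)} = p_np_kp_l - p_{n+k}p_l - p_{n+l}p_k - p_{k+l}p_n + 2p_{n+k+l}$ are correct, and the $+2$ does arise as $-3+1$ exactly as you say.

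One parenthetical remark in your writeup is wrong and worth flagging: the claim that "the inclusion--exclusion naturally produces the correct symmetric-function identity \ldots\ regardless of whether the parts are distinct" does not hold. When parts coincide the $p$-expansion of the \emph{standard} monomial symmetric function changes; for instance $m_{(n,n,l)} = \tfrac12\bigl(p_n^2p_l - p_{2n}p_l - 2p_{n+l}p_n + 2p_{2n+l}\bigr)$, with an overall factor of $\tfrac12$ your formula does not produce. (The final counting identity can still balance, because the paper's $E_\lambda^U$ is a set of \emph{ordered} tuples and so $\#E_{(n,n,l)}^U$ doubles correspondingly, but the intermediate identity in $\Lambda$ as you state it is false for repeated parts.) Also note that the passage from disjoint tuples of correct sequences to disjoint Escher tuples is not a formal consequence of the cardinality equality $\#E_k^U=\#C_k^U$ in Theorem~\ref{thm:szenesrok}; the paper itself handles this with an "it is not hard to see" remark before Theorem~\ref{thm:pairformula}, and your proof inherits the same informal step. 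Neither issue affects the main argument for distinct $n,k,l$.
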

Using the pair formula in Theorem \ref{thm:pairformula} we can rewrite this as 
\[c_{(n,k,l)}^U = \#E_{(n,k,l)}^U + \#E_{(n+k+l)}^U-c_{(n+k,l)}^U - c_{(n+l,k)}^U - c_{(k+l,n)}^U  \]
\begin{corollary}[Stanley quadruple coeff formula]\label{thm:quadrupleformula}
 \begin{align*}
 c_{(n,k,l,m)}^U = & \#E_{(n,k,l,m)}^U-\\
 &-\#E_{(n+k,l,m)}^U - \#E_{(n+l,k,m)}^U - \#E_{(n+m,k,l)}^U-\#E_{(k+l,n,m)}^U-\#E_{(k+m,n,l)}^U-\#E_{(l+m,n,k)}^U+\\
 &+ \#E_{(n+k,l+m)}^U+\#E_{(n+l,k+m)}^U +\#E_{(n+m,k+l)}^U+\\
 &+ 2*(\#E_{(n+k+l,m)}^U +\#E_{(n+k+m,l)}^U+\#E_{(n+l+m,k)}^U+\#E_{(k+l+m,n)}^U-\\
 & -6*\#E_{(n,k,l,m)}^U
\end{align*}
\end{corollary}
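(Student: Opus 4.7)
The plan is to derive this corollary as a direct specialization of Theorem~\ref{thm:coefformula} to the four-part partition $\lambda=(n,k,l,m)$. By that theorem, what remains is to compute the transition coefficients $d_\tau^{(n,k,l,m)}$ expressing $m_{(n,k,l,m)}$ in the power-sum basis of $\Lambda$; each $p_\tau$ then contributes $\#E_\tau^U$ after applying the $G$-homomorphism and reading off the coefficient of $v_1 v_2 \cdots v_N$.

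First I would compute this expansion via Mobius inversion on the lattice of set partitions of $\{1,2,3,4\}$, with the indices labelled by $(n,k,l,m)$. Starting from the identity
\[
p_n p_k p_l p_m \;=\; \sum_{\pi}\, m_{\lambda_\pi},
\]
where $\pi$ ranges over set partitions of $\{1,2,3,4\}$ and $\lambda_\pi$ collects the block-sums of the labels, Mobius inversion in the partition lattice yields
\[
m_{(n,k,l,m)} \;=\; \sum_\pi \mu(\hat{0},\pi)\, p_{\lambda_\pi}, \qquad \mu(\hat{0},\pi) \;=\; \prod_{B\in\pi}(-1)^{|B|-1}(|B|-1)!.
\]
Grouping the 15 set partitions of $\{1,2,3,4\}$ by block-type gives five contribution classes: coefficient $+1$ for the finest partition (one term); $-1$ for each of the six $(2,1,1)$-type partitions; $+1$ for each of the three $(2,2)$-type partitions; $+2$ for each of the four $(3,1)$-type partitions; and $-6$ for the coarsest partition $\{1234\}$. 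A sanity check against Corollary~\ref{thm:tripleformula} reproduces the Mobius values $+1,-1,+2$ for the three-part case, which confirms the signs and block-type accounting.

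Second, I would apply Stanley's $G$-homomorphism $\rho_U$ to the resulting expansion and extract the coefficient of $v_1 v_2 \cdots v_N$. By the identity $c_\lambda^G=[m_\lambda^G, v_1\cdots v_N]$ derived right before Theorem~\ref{thm:pairformula}, this coefficient equals $c_{(n,k,l,m)}^U$; by identity~\eqref{powercorrect} together with Theorem~\ref{thm:szenesrok}, each product $p_{\tau_1}\cdots p_{\tau_s}$ contributes exactly $\#E_{(\tau_1,\ldots,\tau_s)}^U$. Substituting the 15 signed terms produces precisely the stated formula, with the understanding that the last line of the displayed formula should read $\#E_{(n+k+l+m)}^U$ rather than $\#E_{(n,k,l,m)}^U$.

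There is no new mathematical content beyond Theorem~\ref{thm:coefformula}; the only obstacle is the clerical bookkeeping of 15 set partitions, their signs, and the naming of the resulting Escher tuples. A mild subtlety arises when some of $n,k,l,m$ coincide, since $p_n p_k p_l p_m=\sum_\pi m_{\lambda_\pi}$ then acquires symmetry factors on the right, but these are absorbed symmetrically under Mobius inversion and leave the final coefficient formula unchanged.
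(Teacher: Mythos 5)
Your proof follows the same route the paper implicitly takes: specialize Theorem~\ref{thm:coefformula}, $c_\lambda^U = \sum_\tau d_\tau^\lambda \,\#E_\tau^U$, to $\lambda=(n,k,l,m)$ and compute the transition coefficients $d_\tau^\lambda$. The paper does not write out this step, so your M\"{o}bius inversion over the partition lattice of $\{1,2,3,4\}$, producing the multipliers $+1,-1,+1,+2,-6$ for the block types $(1,1,1,1),(2,1,1),(2,2),(3,1),(4)$, supplies exactly the missing bookkeeping, and your sanity check against Corollary~\ref{thm:tripleformula} confirms it. You are also right that the final term of the displayed formula should be $\#E_{(n+k+l+m)}^U$ rather than $\#E_{(n,k,l,m)}^U$; that is a typo in the statement (which also never closes the parenthesis opened after the factor $2$).

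The one place you move too fast is the case of coinciding parts. Your closing remark that symmetry factors ``are absorbed symmetrically under M\"{o}bius inversion and leave the final coefficient formula unchanged'' is not correct as stated: for instance $m_{(n,n)}=\tfrac{1}{2}\left(p_n^2-p_{2n}\right)$, so when $\lambda$ has repeated parts the transition coefficients $d_\tau^\lambda$ acquire a $1/|\mathrm{Aut}(\lambda)|$ factor that M\"{o}bius inversion over set partitions of $\{1,\ldots,r\}$ does not by itself produce. The paper glosses over the same point (it applies $m_{(n,k)}=p_np_k-p_{n+k}$ without the caveat $n\neq k$), so you are not out of step with the source; but a complete proof should either restrict to distinct $n,k,l,m$ or explain concretely which convention on $c_\lambda$ or on the $\#E_\tau$ counts compensates for the automorphism factor, rather than asserting that nothing changes.
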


\newpage
\section{Machine Learning}

The message of Theorem \ref{thm:coefformula} is that the Stanley coefficient $c_\lambda$ should count $\lambda$-Escher tuples satisfying some specific splitting and concatenation properties. Following the conventions and notations of \S \ref{sec:mainresults}, for any partition $\lambda = (\lambda_1,\ldots, \lambda_r)$ we define the core representation 
\[\tau_\lambda: E_\lambda^U \to \mathbb{Q}^{\xi(\lambda)}\]
which roughly sends a $\lambda$-Escher-tuple $\mathbf{u}=(u_1,\ldots u_r)$ to its core vector of dimension $\xi(\lambda)$, collecting $\mathbf{SEStart}(v,w), \mathbf{SEEnd}(v,w)$ and $\mathbf{FirstIns}(v,w)$ points for all possible $v,w$ pairs which come from concatenation of Eschers in the tuple. Coordinates of the core vector are called cores, and the number $\xi(\lambda)=\xi(|\lambda|)$ will depend only on the length of $\lambda$. 

As we have seen in \S \ref{sec:mainresults}, the delicate issue is how to define those coordinates of the representation where there is no concatenation point. For example, how do we define $\mathbf{SEStart}(u_iu_j, u_k)$, $\mathbf{SEEnd}(u_iu_j, u_k)$, and $\mathbf{FirstIns}(u_iu_j, u_k)$ when $\mathbf{I}(u_i, u_j) = \emptyset$, and therefore we can not concatenate $u_i$ with $u_j$? 
\[\tau^r: E_{(\lambda_1,\ldots, \lambda_r)}^U \to \mathbb{Q}^{\xi(\lambda)}\]
\[
    \tau^r(u_1,\ldots, u_r)=(0,\tau^2(u_i,u_j)[1:],
    \tilde{\tau}^2(u_iu_j,u_k))
\]
where $1\le i<j<k \le r$ and we do not add further core coordinates, corresponding to longer concatenations, because (according to our experiments) they do not result in better condition graphs. For the definition of $\tau^2(u_i,u_j)[1:]$ and $\tilde{\tau}^2(u_iu_j,u_k)$ see \eqref{def:tildetau}.

We call a permutation of the $\xi(\lambda)$ entries of the core vector a sorting permutation, if it moves the entries to descending order. The sorting permutation is not unique in case there are repeated entries. For a permutation $\tau \in S_{\xi(\lambda)}$ we denote by $E_\lambda^U(\tau)$ the set of $\lambda$-tuples whose sorting permutation is $\tau$. 

Following extensive experiments with Gurobi Software \cite{gurobi} involving Escher tuples, triples and quadruples, we arrived to the following surprising conjecture, which says that the Stanley coefficients are determined by the sorting orders, not the actual value of the core coordinates.

\begin{conjecture}\label{conj:decisiontree}
For any partition $\lambda$ there is a set of sorting orders 
\[T_\lambda=\{\tau_1,\ldots, \tau_{N(\lambda)})\},\]
independent of $U$, such that 
\[c_\lambda^U = \# (E_\lambda^U(\tau_1) \cup 
\ldots \cup E_\lambda^U(\tau_N)).\]
That is, the Stanley coefficient $c_\lambda$ counts the number of $\lambda$-Escher tuples with sorting core order in $T_\lambda$. We say that $T_\lambda$ characterises the Stanley coefficient $c_\lambda$
\end{conjecture}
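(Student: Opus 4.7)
The plan is to prove Conjecture \ref{conj:decisiontree} by induction on the length $r$ of $\lambda$, using Theorem \ref{thm:coefformula} as the engine and extending the Szenes--Rok injection from pairs to arbitrary tuples. The overall strategy is to decompose each $E_\mu^U$ appearing in the signed sum $c_\lambda^U = \sum_\mu d_\mu^\lambda \cdot \# E_\mu^U$ according to sorting orders on the $\tau^r$-coordinates of the lifted $\lambda$-tuple, and then to read $T_\lambda$ off as the set of sorting orders whose net contribution is $+1$.

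\textbf{Base cases.} For $r=1$, Theorem \ref{thm:szenesrok} gives $c_{(N)}^U = \# E_N^U$, so the conjecture holds with $T_{(N)}$ containing the single trivial sorting permutation. For $r=2$, I would revisit the Szenes--Rok construction. Their injection $\phi : E_{n+k}^U \to E_{n,k}^U$ together with $\psi \circ \phi = \mathrm{id}$ (Theorem \thref{id}) characterises $\mathrm{im}(\phi)^c$ as the pairs $(u,v)$ with no insertion point or with $\phi \circ \psi(u,v)\neq (u,v)$. The key verification is that membership in $\mathrm{im}(\phi)^c$ is determined solely by the sorting order of the four coordinates of $\tau^2(u,v)$; once this is established, $T_{(n,k)}$ is defined as the image of $\mathrm{im}(\phi)^c$ under the sorting map.

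\textbf{Inductive step.} For $r\ge 3$, apply Corollary \ref{thm:coefformula} and note that every $\mu$ occurring in the sum is obtained from $\lambda$ by merging parts. Each $\mu$-tuple $(w_1,\ldots,w_s)$ is the concatenation of a $\lambda$-tuple $(u_1,\ldots,u_r)$, and the design of $\tau^r$ (in particular the $\tilde{\tau}^2(u_iu_j, u_k)$ coordinates introduced in \eqref{def:tildetau}) ensures that the core vector of the merged tuple is recoverable from the core vector of the original $\lambda$-tuple. Hence each $E_\mu^U$ admits a sorting-order decomposition compatible with the one on $E_\lambda^U$. The inductive hypothesis supplies $T_\mu$ for every strict coarsening $\mu$, and by a careful inclusion--exclusion using the coefficients $d_\mu^\lambda$ one defines
\[
T_\lambda = \Bigl\{ \tau \ : \ \sum_\mu d_\mu^\lambda \cdot \#\{\mu\text{-tuples lifting to sorting order }\tau\} = 1 \Bigr\},
\]
with all other sorting orders contributing $0$. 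This simultaneously produces the witness $T_\lambda$ and the identity $c_\lambda^U = \# \bigcup_{\tau \in T_\lambda} E_\lambda^U(\tau)$.

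\textbf{Main obstacle.} The critical difficulty is the exceptional-UIO phenomenon flagged in Remark \ref{remark:expl}: the tidy condition-graph descriptions of Figure \ref{fig:condgraphpair} and Figure \ref{fig:condgraph3} are only approximately correct, with rare small mismatches. This tells us that the combinatorially natural sorting orders miss (or over-count) a handful of sorting classes whose net contribution in the inclusion--exclusion is nevertheless $\pm 1$, and the exact set $T_\lambda$ is strictly richer than the ML output. Pinning down $T_\lambda$ and proving its $U$-independence requires a structural understanding of those exceptional classes, which at present we lack. A secondary, technical obstacle is to realise the inclusion--exclusion by explicit sorting-order-respecting injections between the $E_\mu^U$'s that simultaneously generalise Szenes--Rok for all coarsenings; I expect such compatibility to be forced, but proving it will demand a uniform construction across the whole refinement lattice of $\lambda$. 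The exceptional-UIO analysis is, in my view, the binding constraint: the rest of the framework is a mechanical lift of the pair case once those anomalies are understood.
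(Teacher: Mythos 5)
This statement is labeled a \emph{conjecture} in the paper, and the paper offers no proof of it --- it is supported only by computational evidence (Gurobi experiments on tuples, triples, and quadruples) and is immediately followed by a list of the very obstructions (exponential growth of characterizing sets, dependence on the partition beyond its length, subset-sum complexity) that have blocked a proof. There is therefore no ``paper's proof'' to compare your outline against; the question is only whether your outline closes the gap, and it does not.

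Your plan has a circularity that sits underneath the exceptional-UIO issue you correctly flag. Starting from Theorem \ref{thm:coefformula}, $c_\lambda^U = \sum_\mu d_\mu^\lambda\,\#E_\mu^U$, you propose to decompose each $\#E_\mu^U$ along sorting orders of lifted $\lambda$-tuples and then set
\[
T_\lambda = \Bigl\{\tau : \sum_\mu d_\mu^\lambda \cdot \#\{\mu\text{-tuples lifting to sorting order }\tau\} = 1\Bigr\},
\]
declaring all other sorting orders to contribute $0$. But nothing in the framework forces the net signed contribution per sorting order to lie in $\{0,1\}$, and nothing forces it to be independent of $U$: the count $\#\{\mu\text{-tuples lifting to }\tau\}$ is a $U$-dependent quantity, so the right-hand side of your definition of $T_\lambda$ is, a priori, $U$-dependent. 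Asserting that it is constant in $U$ and $\{0,1\}$-valued is essentially a restatement of the conjecture, not a reduction of it to the base case. Two further unproved lemmas are also doing real work in your outline: (i) that membership in $\mathrm{im}(\phi)^c$ for pairs is determined by the sorting order of $\tau^2$ alone (the paper's own ML Theorem~\ref{mainthm1} shows the natural order-based description fails on exceptional UIOs, so this is not a routine verification), and (ii) that each $\mu$-tuple has a \emph{canonical} lift to a $\lambda$-tuple compatible across the entire refinement lattice of $\lambda$ --- Proposition~\ref{thereissubescher} guarantees a subescher exists but not a unique or coherent choice. You are right that the exceptional-UIO phenomenon is the binding constraint, but as written the inductive step is not a mechanical lift of the pair case; it presupposes the conclusion.
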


While attempting to describe the Stanley coefficients through the use of characterizing sets, several significant challenges arose, which we can summarise as follows.

\begin{enumerate}
    \item \textbf{Exponential Growth of Characterizing Sets}: The size \( N(\lambda) \) of the characterizing set increases drastically, even for small cases like triples. This rapid growth makes the analysis computationally difficult and inefficient.
    \item \textbf{Dependence on Partition Structure}: The characterizing set is not solely determined by the length \( |\lambda| \) of the partition but depends intricately on the specific structure of the partition \( \lambda \). This complicates generalization and creates additional complexity when dealing with different partitions of the same length.
    \item \textbf{Subset-Sum Problem and Decision Tree Complexity}: Identifying the appropriate characterizing sets can be viewed as a specialized instance of the subset-sum problem in combinatorics, which is known to be NP-hard. Attempts to represent these sets using decision trees have resulted in extremely complicated and large structures. These trees are not only cumbersome to compute but also impractical to use for further theoretical analysis.
\end{enumerate}
Due to these challenges, we were unable to find a workable description of characterizing sets even for partitions of length 3, which highlights the difficulty in scaling this approach for more general cases.

Our strategy, instead, was to describe $T_\lambda$ as a set of partitions $\tau \in S_{l(\lambda)}$ satisfying a Boolean expressions of the form 
\[\mathrm{Condition}_1 \textbf{ OR } \mathrm{Condition}_2 \textbf{ OR } \ldots \textbf{ OR } \mathrm{Condition}_r\]
where each condition has the form 
\[\text{Condition  = } \tau(i_1)<\tau(j_1) \textbf{ AND } \ldots \textbf{ AND } \tau(i_s)<\tau(j_s)\]
We encoded $\mathrm{Condition}_i$ as a graph $G^i_\lambda$ on the vertices $\{1,\ldots, \xi(\lambda)\}$, with a directed edge from $i_t$ to $j_t$ for $t=1,\ldots, r$, and call 
\[G_\lambda= G^1_\lambda \cup \ldots \cup G^r_\lambda\]
a condition graph. Due to the coding practice, we often refer to the number of Conditions as the number of rows of our condition graph.  

%\begin{table}[h!]
% \centering
% \caption{Residual scores and number of mismatches of Stanley coefficients}
%\begin{tabular}{c|cccc}
%          & \textbf{1 row} & \textbf{2 rows} & \textbf{3 rows} & Total \\ \hline
%\textbf{$(3,2,1)$} &   7     &    2     &   2    & 286     \\
%                   &   4          &   1         &   1   & 132 %    \\
%\textbf{$(4,2,1)$} &   ?        &   35     &   34     &   3036  \\
%                   &   ?          &   12         &   10   & 429     \\
%\textbf{$(4,3,1)$} &   ?       &   424      &  358   &  10350    \\
%                   &   ?          &     ~90       &   52   & 1490     \\
%\textbf{$(4,3,2)$} &         &        &       &    \\
%\textbf{$(5,2,1)$} &        &        &       &    \\
%\end{tabular}
%\end{table}

\subsection{The RL algorithm}\label{adamsec}

Our policy-based reinforcement learning model is an adapted version of the cross-entropy method for graphs, as developed by Adam Zsolt Wagner \cite{wagner}. The deep NN learns a probability distribution that guides the agent’s decisions during graph construction.

The RL agent learns to identify optimal condition graphs. Specifically, after fixing the number of rows \( r \), we initialized \( r \) copies \( G_1, \ldots, G_r \) of the empty graph on \( \xi(\lambda) \) vertices. The edge set of each graph \( G_i \) is ordered as \( e_1^i, \ldots, e_m^i \), where \( m = \binom{\xi(\lambda)}{2} \), and we defined a sequential edge ordering across the \( r \) graphs as follows:
\[
e_1^1, e_1^2, \ldots, e_1^r, e_2^1, e_2^2, \ldots, e_2^r, \ldots, e_m^r.
\]
At each step, the RL agent processes this ordered list of edges and adds the corresponding edge to the current graph \( G_\lambda \) based on a policy learned by a deep neural network (NN). Each run consists of \( r \cdot \binom{\xi(\lambda)}{2} \) steps, where the agent decides whether or not to add each edge.

The neural network takes two binary vectors as input:
\begin{enumerate}
    \item Game Turn Vector: A binary vector that tracks the number of moves made in the current game, with all entries set to 0 except the one corresponding to the current turn set to 1.
    \item Graph State Vector: A binary vector representing the current state of the graph. The \( i \)-th entry is set to 1 if the edge corresponding to that entry was added during the \( i \)-th turn, and 0 otherwise.
\end{enumerate}

%\subsubsection{Output Representation}
The output of the NN is a probability distribution over \(\{0, 1\}\). The agent samples from this distribution to make its decision for the current step: if the sampled value is 1, the corresponding edge is added to the graph.

%\subsubsection{Training the RL Agent}
The RL agent learns the game by running batches of agents through a series of complete games. After each game, the agents are evaluated based on their performance, and the top-performing individuals--typically the top 10\%--are selected to update the neural network. It is important to note that there is a single shared NN used by all agents during training. The input-output pairs (game state at step \( i \), decision at step \( i \)) from the top agents are used to adjust the NN parameters via the cross-entropy method. This selection mechanism encourages the model to replicate the successful behaviors of the best-performing agents.

Additionally, a small percentage of the all-time best agents are retained throughout training and used in future updates. This approach ensures that the NN continues to learn from historically strong strategies, thereby improving the likelihood of finding optimal solutions.

%Each cycle of batch playthroughs constitutes one iteration of the training process.
$\\ \\ $
We modified Wagner’s cross-entropy method at the following crucial points: 

\begin{enumerate}
    \item Multi-Type Edges: While Wagner’s method uses binary edge choices, our graphs allow edges to take multiple types. Accordingly, we modify the probability distribution to output values in \(\{0, \ldots, K-1\}\), where \( K \) is the number of possible edge types. Furthermore we use categorical cross-entropy instead of binary cross-entropy for training and our Graph State Vector uses one-hot encoding for every K consecutive bits to encode a value in \(\{0, \ldots, K-1\}\).  Concretely we use K=3 to express that either the source vertex is smaller than the target vertex, or greater equal to the target vertex or that the relation is irrelevant.

    \item Edge Selection: Not every two vertices have a meaningful relationship, so out of the full \( r \cdot m \) edges we will only pick some and reduce the size of our ordered list of edges. Our selection was heavily guided by mathematical intuition and was necessary since the training otherwise would be stuck in local minima when using all edges. Reducing the edges also improves the running time of the algorithm linearly.
    
    \item State Memorization: To speed up computation, we cache the scores of previously encountered graph states. This allows us to reuse scores instead of recomputing them when the same graph configuration is encountered again, significantly reducing computational overhead in later stages of training.
    
    %\item Incremental Score Evaluation: Rather than evaluating the score of a graph only at the end of the game, we compute the score after each move. This enables the agent to detect irrecoverable states early, avoiding unnecessary exploration of suboptimal paths. While this initially increases computational costs, the reuse of previously evaluated states improves efficiency over time, as many states are skipped once they have been encountered and scored.
\end{enumerate}

\section{Implementation}

The complete python code implementation can be found at
\begin{center}
\url{https://github.com/berczig/PositivityConjectures}.
\end{center}

To generate the necessary training data for our model, we follow a multi-step process involving combinatorial generation and brute-force classification. The key steps are outlined below:\\

\textbf{Step 1: Generating UIO graphs}
We first generate all UIOs of length \( N \), where \( N \leq 10 \). For each UIO, we construct the associated incomparability graph. This graph is represented as an \( N \times N \) matrix, where for each pair \( (i, j) \), the matrix entry specifies the relation between \( u_i \) and \( u_j \) for the UIO \( (u_0, \ldots, u_{N-1}) \). The incomparability graph is used as a structural feature for downstream tasks.\\

\textbf{Step 2: Generating $\lambda$-Eschers}
Using a brute-force search approach, we generate all possible tuples of UIOs and identify which tuples correspond to Escher structures. The process systematically checks each tuple to determine whether or not it satisfies the Escher condition. This classification serves as a binary label for the dataset.\\

\textbf{Step 3: Calculating Coefficients \( c_\lambda^U \)}
For each valid UIO \( U \) and partition \( \lambda \), we calculate the corresponding coefficient \( c_\lambda^U \) using Theorem \ref{thm:pairformula} and Corollary \ref{thm:tripleformula}, \ref{thm:quadrupleformula}. The generated dataset includes coefficients for all tuples \( (\lambda, U) \), where \( \lambda = (\lambda_1, \ldots, \lambda_r) \) and \( U = (u_1, \ldots, u_N) \), subject to the constraints:
\[
2 \leq r \leq 4, \quad \lambda_1 + \cdots + \lambda_r \leq N \leq 10.
\]
With a C++ implementation, we successfully generated a dataset of approximately 100,000 tuples, enabling efficient training of our model.\\

\textbf{Step 4: Calculating Cores and Core-Types}
For each UIO \( U \), we compute all its core vectors \( \tau_\lambda(E_{\lambda}^U) \), classifying all core vectors into emerging core-types. The dataset stores the distinct core-types along with their respective counts. Specifically, we iterate over all cores, and when a new core-type is encountered, we store it. If a core-type has been observed previously, we simply increment the counter associated with that core-type.

The number of core-types is significantly smaller than the total number of \( \lambda \)-Escher tuples, which enables optimization. When evaluating a condition graph, we iterate over the distinct core-types, summing the contributions of all matching core-types with their respective multiplicities:
\begin{equation}\label{cgu}
c_G^U = \sum_{\substack{ w \in E_{\lambda}^U\\ \tau^r(w) \text{ satisfies } G}} 1 = \sum_{\substack{ w \in \tau^r(E_{\lambda}^U)\\ w \text{ satisfies } G}} \text{\#cores with core-type } w.
\end{equation}
By reducing the evaluation to core-types, we can streamline computation, especially for large-scale experiments.

In our model we calculate all coefficients simultaneously, this is faster since two UIOs can share the same core-type and by batching the computation we only have to check if the core-type satisfies a condition graph once. We collect all distinct core-types 

\[B = \bigcup\limits_{\text{UIO }U, |U|=n} \tau^r(E_{\lambda}^U)\] 

and calculate the coefficient prediction vector (recall $C_n$ stands for the $n$th Catalan number, which is the number of length n UIOs)

\begin{equation}\label{cgu_batched}
c_G = \sum_{\substack{ w \in B\\ w \text{ satisfies } G}} (\text{\#cores with core-type } w \text{ in }U_1,  \cdots, \text{\#cores with core-type } w \text{ in }U_{C_n}).
\end{equation}

\subsection{Score Function}  

Given a partition \( \lambda \) we apply the same score function for all UIOs of the same size, i.e. having the same number of intervals. Let \( 0 < n \) denote the number of intervals, and \( C_n \), the n'th Catalan number, which is the number of UIOs of size n. We let $x \in \mathbb{N}^{C_n}$
denote the predicted coefficients vector and let $y \in \mathbb{N}^{C_n}$ denote the true Stanley coefficients vector. Then we define the score function as
\begin{equation}\label{scorefunction}
score_1(x) = - ||x-y||_1 - \mathbf{1}(\text{has trivial row}) \cdot 10000 -  
num\_edges \cdot EdgePenalty
\end{equation}    

In the modified version where we search for a lower bound of the Stanley coeffiecient, we set the score to be a large negative number in case there is a UIO with where predicted coeff \( > \) true coeff:

\begin{align}\label{scorefunction_only_negative}
score_2(x) = 
\begin{cases}
    - 5000, & \text{if } x_i > y_i \text{, for any }i \geq 0 \\
    score_1(x), & else
\end{cases}
\end{align}

Notice, we have opted for a negative score function because our objective is to maximize the score, with the optimal value set at \( 0 \).

\subsection{Hyperparameters}
Wagner's implementation includes several hyperparameters that govern the learning process, such as the learning rate, the number of graphs per batch, and the selection percentage of the top-performing individuals, among others. We retained most of these hyperparameters in our experiments. The learning rate was primarily set to either \( 0.01 \) or \( 0.05 \). After conducting numerous experiments, we found that a batch size of \( 600 \) graphs yielded comparable results to a larger batch size of \( 1500 \) graphs, making it a more efficient choice. Moreover, we employed one-hot encoding for graph representations.

A significant speedup in our approach was achieved by focusing on a random subset of the UIOs. If the algorithm fails to find a condition graph for this subset, it is unlikely to succeed when evaluated against all UIOs. This strategic reduction not only enhances computational efficiency but also improves the likelihood of successful evaluations, and allowed us to work with longer UIOs.

\subsection{Results}

This section presents the results obtained using the RL agent, summarized in tables. Each table provides an overview of the mismatches between the Stanley coefficients and the counts associated with a given condition graph. The mismatches are evaluated in two different ways, in individual tables

\begin{enumerate}
    \item \textbf{Tables with number of correctly predicted UIOs}: The entry $A/B$ indicates that, out of a total of $B$ UIOs, the condition graph correctly predicts the coefficient for $A$ UIOs.
    \item \textbf{Tables with total absolute error}: The entry $A/B$ means that the total sum of the Stanley coefficients for all UIOs is $B$, while
    \[A=\sum_{U} |\text{predicted coeff for U} - \text{ real coeff for U}|\]
\end{enumerate}

Entries highlighted in blue indicate that the predicted coefficient is less than or equal to the actual coefficient for all UIOs of the given length, meaning the condition graph provides a lower bound for the coefficients.
In contrast, entries highlighted in orange indicate the opposite: the predicted coefficient is greater or equal to the real for all UIOs. Magenta entries indicate mixed results, and non-higlighted entries indicate perfect match. 

\newpage

\begin{table}[h!]
\centering
\resizebox{0.7\textwidth}{!}{
\begin{tabular}{|l||*{6}{c|}}
\hline
\multicolumn{7}{|c|}{Total absolute error/sum of real coefficients for all UIO} \\
\hline
\backslashbox{Partition}{UIO length}
&\makebox[3em]{4}&\makebox[3em]{5}&\makebox[3em]{6}
&\makebox[3em]{7}&\makebox[3em]{8}&\makebox[3em]{9}\\\hline\hline
(2, 2) & 0/12 & 0/192 & 0/1876 & 0/14496 & 0/97436 & 0/597056 \\
\hline
(3, 1) & 0/30 & 0/440 & 0/4044 & 0/29852 & 0/193626 & 0/1152912 \\
\hline
(3, 2) &  & 0/75 & 0/1446 & 0/16517 & 0/145828 & 0/1100405 \\
\hline
(4, 1) &  & 0/259 & 0/4590 & 0/49193 & 0/413056 & 0/2992325 \\
\hline
(3, 3) &  &  & 0/414 & 0/9630 & 0/128796 & 0/1302006 \\
\hline
(4, 2) &  &  & 0/746 & 0/16532 & 0/213152 & 0/2093462 \\
\hline
(5, 1) &  &  & 0/2820 & 0/58706 & 0/719904 & 0/6784258 \\
\hline
(4, 3) &  &  &  & 0/3903 & 0/103326 & 0/1550199 \\
\hline
(5, 2) &  &  &  & 0/9595 & 0/240366 & 0/3450237 \\
\hline
(6, 1) &  &  &  & 0/36639 & 0/877158 & 0/12124473 \\
\hline
(4, 4) &  &  &  &  & 0/32008 & 0/971824 \\
\hline
(5, 3) &  &  &  &  & \color{cyan} 1/52560\color{black} & \color{cyan} 28/1541010\color{black} \\
\hline
(6, 2) &  &  &  &  & 0/146100 & 0/4093944 \\
\hline
(7, 1) &  &  &  &  & 0/550914 & 0/14917146 \\
\hline
(5, 4) &  &  &  &  &  & \color{cyan} 6/397195\color{black} \\
\hline
(6, 3) &  &  &  &  &  & 0/877977 \\
\hline
(7, 2) &  &  &  &  &  & 0/2534175 \\
\hline
(8, 1) &  &  &  &  &  & 0/9395415 \\
\hline
\end{tabular}}

\resizebox{0.7\textwidth}{!}{
\begin{tabular}{|l||*{6}{c|}}
\hline
\multicolumn{7}{|c|}{Number of correctly predicted/number of all UIOs} \\
\hline
\backslashbox{Partition}{UIO length}
&\makebox[3em]{4}&\makebox[3em]{5}&\makebox[3em]{6}
&\makebox[3em]{7}&\makebox[3em]{8}&\makebox[3em]{9}\\\hline\hline
(2, 2) & 14/14 & 42/42 & 132/132 & 429/429 & 1430/1430 & 4862/4862 \\
\hline
(3, 1) & 14/14 & 42/42 & 132/132 & 429/429 & 1430/1430 & 4862/4862 \\
\hline
(3, 2) &  & 42/42 & 132/132 & 429/429 & 1430/1430 & 4862/4862 \\
\hline
(4, 1) &  & 42/42 & 132/132 & 429/429 & 1430/1430 & 4862/4862 \\
\hline
(3, 3) &  &  & 132/132 & 429/429 & 1430/1430 & 4862/4862 \\
\hline
(4, 2) &  &  & 132/132 & 429/429 & 1430/1430 & 4862/4862 \\
\hline
(5, 1) &  &  & 132/132 & 429/429 & 1430/1430 & 4862/4862 \\
\hline
(4, 3) &  &  &  & 429/429 & 1430/1430 & 4862/4862 \\
\hline
(5, 2) &  &  &  & 429/429 & 1430/1430 & 4862/4862 \\
\hline
(6, 1) &  &  &  & 429/429 & 1430/1430 & 4862/4862 \\
\hline
(4, 4) &  &  &  &  & 1430/1430 & 4862/4862 \\
\hline
(5, 3) &  &  &  &  & \color{cyan} 1429/1430\color{black} & \color{cyan} 4842/4862\color{black} \\
\hline
(6, 2) &  &  &  &  & 1430/1430 & 4862/4862 \\
\hline
(7, 1) &  &  &  &  & 1430/1430 & 4862/4862 \\
\hline
(5, 4) &  &  &  &  &  & \color{cyan} 4858/4862\color{black} \\
\hline
(6, 3) &  &  &  &  &  & 4862/4862 \\
\hline
(7, 2) &  &  &  &  &  & 4862/4862 \\
\hline
(8, 1) &  &  &  &  &  & 4862/4862 \\
\hline
\end{tabular}}

\includegraphics[width=4.5cm]{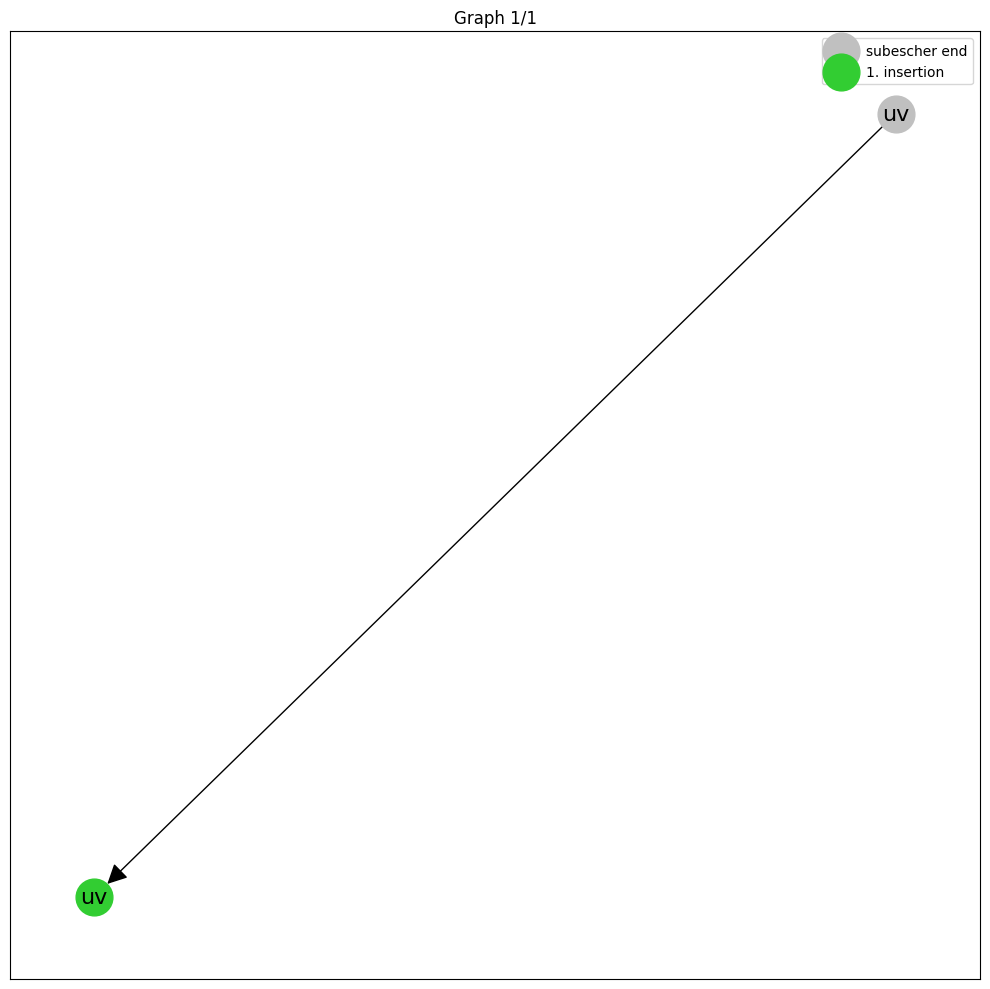}
\centering

\caption{Mismatch statistics for length 2 partitions for the model [SEEnd $<$ FirstIns], corresponding to the graph at bottom. Green entries indicate that this model gives lower bound for the Stanley coefficients even when mismatches occur.}

\label{tab:pairs}
\end{table}

\clearpage

\begin{table}[h!]
\centering
\resizebox{0.65\textwidth}{!}{
\begin{tabular}{|l||*{6}{c|}}
\hline
\multicolumn{7}{|c|}{Total absolute error/sum of real coefficients for all UIO} \\
\hline
\backslashbox{Partition}{UIO length}
&\makebox[3em]{4}&\makebox[3em]{5}&\makebox[3em]{6}
&\makebox[3em]{7}&\makebox[3em]{8}&\makebox[3em]{9}\\\hline\hline
(2, 1, 1) & 0/16 & 0/256 & 0/2552 & 0/20304 & 0/141088 & 0/895072 \\
\hline
(2, 2, 1) &  & 0/42 & 0/860 & 0/10454 & 0/98088 & 0/784470 \\
\hline
(3, 1, 1) &  & 0/106 & 0/2052 & 0/23846 & 0/215620 & 0/1671830 \\
\hline
(2, 2, 2) &  &  & 0/108 & 0/2712 & 0/39216 & 0/427572 \\
\hline
(3, 2, 1) &  &  & \color{orange} 5/286\color{black} & \color{orange} 100/6872\color{black} & \color{orange} 1184/95536\color{black} & \color{orange} 10812/1006152\color{black} \\
\hline
(4, 1, 1) &  &  & 0/1024 & 0/23148 & 0/306112 & 0/3091580 \\
\hline
(3, 2, 2) &  &  &  & 0/742 & 0/21372 & 0/347850 \\
\hline
(3, 3, 1) &  &  &  & \color{orange} 16/1824\color{black} & \color{orange} 400/50940\color{black} & \color{orange} 5736/805620\color{black} \\
\hline
(4, 2, 1) &  &  &  & \color{magenta} 68/3034\color{black} & \color{magenta} 1503/82612\color{black} & \color{magenta} 19623/1279950\color{black} \\
\hline
(5, 1, 1) &  &  &  & 0/12472 & 0/322284 & 0/4778064 \\
\hline
(3, 3, 2) &  &  &  &  & \color{orange} 54/4776\color{black} & \color{orange} 1560/156156\color{black} \\
\hline
(4, 2, 2) &  &  &  &  & 0/8344 & 0/267792 \\
\hline
(4, 3, 1) &  &  &  &  & \color{orange} 724/18758\color{black} & \color{orange} 20117/587564\color{black} \\
\hline
(5, 2, 1) &  &  &  &  & \color{magenta} 1083/41706\color{black} & \color{magenta} 25703/1265520\color{black} \\
\hline
(6, 1, 1) &  &  &  &  & 0/180144 & 0/5237856 \\
\hline
(3, 3, 3) &  &  &  &  &  & 0/28620 \\
\hline
(4, 3, 2) &  &  &  &  &  & \color{orange} 1173/51836\color{black} \\
\hline
(4, 4, 1) &  &  &  &  &  & \color{orange} 3236/177434\color{black} \\
\hline
(5, 2, 2) &  &  &  &  &  & 0/124470 \\
\hline
(5, 3, 1) &  &  &  &  &  & \color{magenta} 12102/265838\color{black} \\
\hline
(6, 2, 1) &  &  &  &  &  & \color{magenta} 17809/681792\color{black} \\
\hline
(7, 1, 1) &  &  &  &  &  & 0/2987556 \\
\hline
\end{tabular}}

\resizebox{0.65\textwidth}{!}{
\begin{tabular}{|l||*{6}{c|}}
\hline
\multicolumn{7}{|c|}{Number of correctly predicted/number of all UIOs} \\
\hline
\backslashbox{Partition}{UIO length}
&\makebox[3em]{4}&\makebox[3em]{5}&\makebox[3em]{6}
&\makebox[3em]{7}&\makebox[3em]{8}&\makebox[3em]{9}\\\hline\hline
(2, 1, 1) & 14/14 & 42/42 & 132/132 & 429/429 & 1430/1430 & 4862/4862 \\
\hline
(2, 2, 1) &  & 42/42 & 132/132 & 429/429 & 1430/1430 & 4862/4862 \\
\hline
(3, 1, 1) &  & 42/42 & 132/132 & 429/429 & 1430/1430 & 4862/4862 \\
\hline
(2, 2, 2) &  &  & 132/132 & 429/429 & 1430/1430 & 4862/4862 \\
\hline
(3, 2, 1) &  &  & \color{orange} 129/132\color{black} & \color{orange} 391/429\color{black} & \color{orange} 1157/1430\color{black} & \color{orange} 3359/4862\color{black} \\
\hline
(4, 1, 1) &  &  & 132/132 & 429/429 & 1430/1430 & 4862/4862 \\
\hline
(3, 2, 2) &  &  &  & 429/429 & 1430/1430 & 4862/4862 \\
\hline
(3, 3, 1) &  &  &  & \color{orange} 425/429\color{black} & \color{orange} 1369/1430\color{black} & \color{orange} 4344/4862\color{black} \\
\hline
(4, 2, 1) &  &  &  & \color{magenta} 404/429\color{black} & \color{magenta} 1202/1430\color{black} & \color{magenta} 3493/4862\color{black} \\
\hline
(5, 1, 1) &  &  &  & 429/429 & 1430/1430 & 4862/4862 \\
\hline
(3, 3, 2) &  &  &  &  & \color{orange} 1416/1430\color{black} & \color{orange} 4651/4862\color{black} \\
\hline
(4, 2, 2) &  &  &  &  & 1430/1430 & 4862/4862 \\
\hline
(4, 3, 1) &  &  &  &  & \color{orange} 1309/1430\color{black} & \color{orange} 3824/4862\color{black} \\
\hline
(5, 2, 1) &  &  &  &  & \color{magenta} 1268/1430\color{black} & \color{magenta} 3662/4862\color{black} \\
\hline
(6, 1, 1) &  &  &  &  & 1430/1430 & 4862/4862 \\
\hline
(3, 3, 3) &  &  &  &  &  & 4862/4862 \\
\hline
(4, 3, 2) &  &  &  &  &  & \color{orange} 4593/4862\color{black} \\
\hline
(4, 4, 1) &  &  &  &  &  & \color{orange} 4625/4862\color{black} \\
\hline
(5, 2, 2) &  &  &  &  &  & 4862/4862 \\
\hline
(5, 3, 1) &  &  &  &  &  & \color{magenta} 4144/4862\color{black} \\
\hline
(6, 2, 1) &  &  &  &  &  & \color{magenta} 4068/4862\color{black} \\
\hline
(7, 1, 1) &  &  &  &  &  & 4862/4862 \\
\hline
\end{tabular}}

\includegraphics[width=4.5cm]{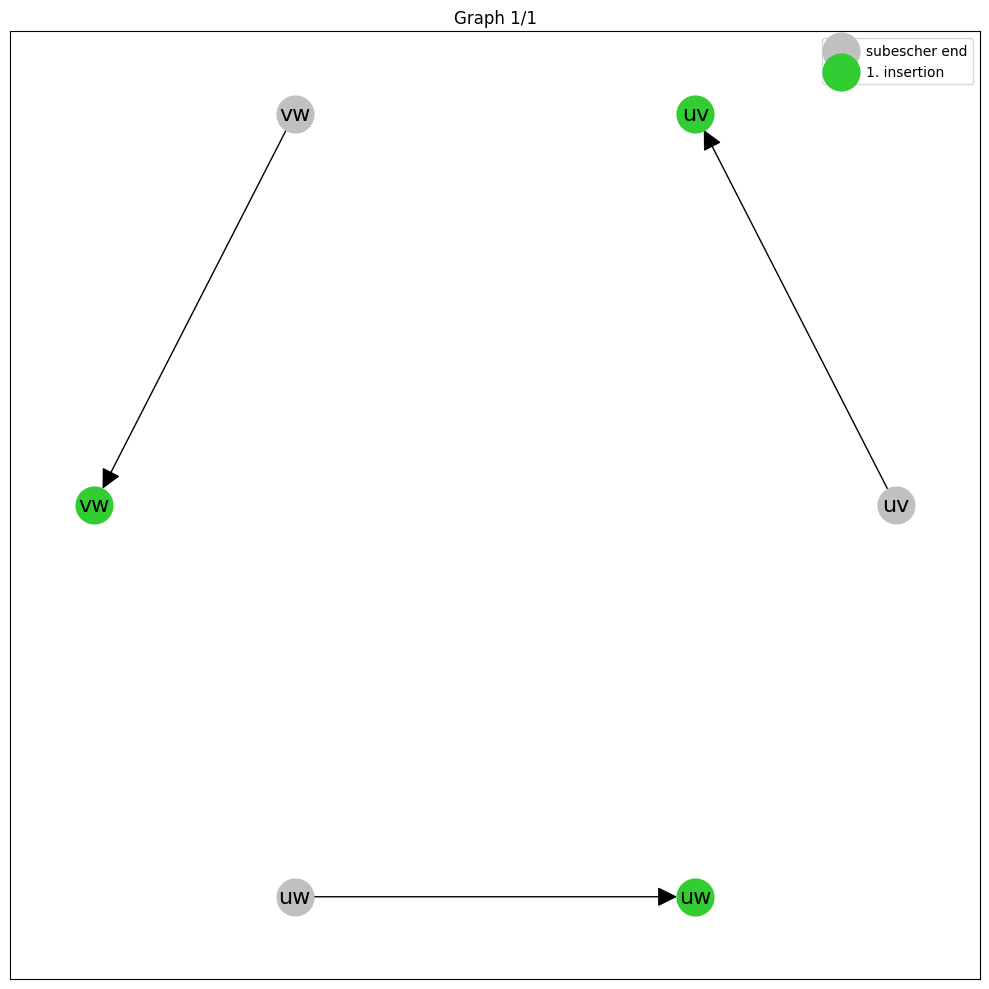}

\caption{Triple partitions: performance of the model [SEEnd(u,v) $<$ FirstIns(u,v)] AND [SEEnd(v,w) $<$ FirstIns(v,w)] AND [SEEnd(u,w) $<$ FirstIns(u,w)].}
\label{tab:triple1}
\end{table}

\clearpage

\begin{table}[h!]
\centering
\resizebox{0.65\textwidth}{!}{
\begin{tabular}{|l||*{6}{c|}}
\hline
\multicolumn{7}{|c|}{Total absolute error/sum of real coefficients for all UIO} \\
\hline
\backslashbox{Partition}{UIO length}
&\makebox[3em]{4}&\makebox[3em]{5}&\makebox[3em]{6}
&\makebox[3em]{7}&\makebox[3em]{8}&\makebox[3em]{9}\\\hline\hline
(2, 1, 1) & \color{cyan} 4/16\color{black} & \color{cyan} 60/256\color{black} & \color{cyan} 564/2552\color{black} & \color{cyan} 4252/20304\color{black} & \color{cyan} 28116/141088\color{black} & \color{cyan} 170364/895072\color{black} \\
\hline
(2, 2, 1) &  & \color{cyan} 10/42\color{black} & \color{cyan} 196/860\color{black} & \color{cyan} 2278/10454\color{black} & \color{cyan} 20456/98088\color{black} & \color{cyan} 156854/784470\color{black} \\
\hline
(3, 1, 1) &  & \color{cyan} 52/106\color{black} & \color{cyan} 960/2052\color{black} & \color{cyan} 10676/23846\color{black} & \color{cyan} 92656/215620\color{black} & \color{cyan} 691364/1671830\color{black} \\
\hline
(2, 2, 2) &  &  & 0/108 & 0/2712 & 0/39216 & 0/427572 \\
\hline
(3, 2, 1) &  &  & \color{cyan} 178/286\color{black} & \color{cyan} 4100/6872\color{black} & \color{cyan} 54796/95536\color{black} & \color{cyan} 556260/1006152\color{black} \\
\hline
(4, 1, 1) &  &  & \color{cyan} 664/1024\color{black} & \color{cyan} 14508/23148\color{black} & \color{cyan} 185808/306112\color{black} & \color{cyan} 1820580/3091580\color{black} \\
\hline
(3, 2, 2) &  &  &  & \color{cyan} 454/742\color{black} & \color{cyan} 12468/21372\color{black} & \color{cyan} 194370/347850\color{black} \\
\hline
(3, 3, 1) &  &  &  & \color{cyan} 834/1824\color{black} & \color{cyan} 22572/50940\color{black} & \color{cyan} 345780/805620\color{black} \\
\hline
(4, 2, 1) &  &  &  & \color{cyan} 2026/3034\color{black} & \color{cyan} 53292/82612\color{black} & \color{cyan} 799206/1279950\color{black} \\
\hline
(5, 1, 1) &  &  &  & \color{cyan} 9322/12472\color{black} & \color{cyan} 234984/322284\color{black} & \color{cyan} 3402364/4778064\color{black} \\
\hline
(3, 3, 2) &  &  &  &  & \color{cyan} 2796/4776\color{black} & \color{cyan} 87360/156156\color{black} \\
\hline
(4, 2, 2) &  &  &  &  & \color{cyan} 4792/8344\color{black} & \color{cyan} 147632/267792\color{black} \\
\hline
(4, 3, 1) &  &  &  &  & \color{cyan} 15518/18758\color{black} & \color{cyan} 475244/587564\color{black} \\
\hline
(5, 2, 1) &  &  &  &  & \color{cyan} 35406/41706\color{black} & \color{cyan} 1052460/1265520\color{black} \\
\hline
(6, 1, 1) &  &  &  &  & \color{cyan} 146124/180144\color{black} & \color{cyan} 4170312/5237856\color{black} \\
\hline
(3, 3, 3) &  &  &  &  &  & 0/28620 \\
\hline
(4, 3, 2) &  &  &  &  &  & \color{cyan} 42764/51836\color{black} \\
\hline
(4, 4, 1) &  &  &  &  &  & \color{cyan} 106218/177434\color{black} \\
\hline
(5, 2, 2) &  &  &  &  &  & \color{cyan} 107670/124470\color{black} \\
\hline
(5, 3, 1) &  &  &  &  &  & \color{cyan} 237488/265838\color{black} \\
\hline
(6, 2, 1) &  &  &  &  &  & \color{cyan} 584256/681792\color{black} \\
\hline
(7, 1, 1) &  &  &  &  &  & \color{cyan} 2550966/2987556\color{black} \\
\hline
\end{tabular}}

\resizebox{0.65\textwidth}{!}{
\begin{tabular}{|l||*{6}{c|}}
\hline
\multicolumn{7}{|c|}{Number of correctly predicted/number of all UIOs} \\
\hline
\backslashbox{Partition}{UIO length}
&\makebox[3em]{4}&\makebox[3em]{5}&\makebox[3em]{6}
&\makebox[3em]{7}&\makebox[3em]{8}&\makebox[3em]{9}\\\hline\hline
(2, 1, 1) & \color{cyan} 12/14\color{black} & \color{cyan} 27/42\color{black} & \color{cyan} 58/132\color{black} & \color{cyan} 121/429\color{black} & \color{cyan} 248/1430\color{black} & \color{cyan} 503/4862\color{black} \\
\hline
(2, 2, 1) &  & \color{cyan} 39/42\color{black} & \color{cyan} 101/132\color{black} & \color{cyan} 242/429\color{black} & \color{cyan} 545/1430\color{black} & \color{cyan} 1174/4862\color{black} \\
\hline
(3, 1, 1) &  & \color{cyan} 33/42\color{black} & \color{cyan} 71/132\color{black} & \color{cyan} 152/429\color{black} & \color{cyan} 321/1430\color{black} & \color{cyan} 673/4862\color{black} \\
\hline
(2, 2, 2) &  &  & 132/132 & 429/429 & 1430/1430 & 4862/4862 \\
\hline
(3, 2, 1) &  &  & \color{cyan} 93/132\color{black} & \color{cyan} 192/429\color{black} & \color{cyan} 375/1430\color{black} & \color{cyan} 715/4862\color{black} \\
\hline
(4, 1, 1) &  &  & \color{cyan} 96/132\color{black} & \color{cyan} 212/429\color{black} & \color{cyan} 473/1430\color{black} & \color{cyan} 1049/4862\color{black} \\
\hline
(3, 2, 2) &  &  &  & \color{cyan} 398/429\color{black} & \color{cyan} 1135/1430\color{black} & \color{cyan} 3054/4862\color{black} \\
\hline
(3, 3, 1) &  &  &  & \color{cyan} 381/429\color{black} & \color{cyan} 1022/1430\color{black} & \color{cyan} 2562/4862\color{black} \\
\hline
(4, 2, 1) &  &  &  & \color{cyan} 261/429\color{black} & \color{cyan} 518/1430\color{black} & \color{cyan} 1028/4862\color{black} \\
\hline
(5, 1, 1) &  &  &  & \color{cyan} 288/429\color{black} & \color{cyan} 671/1430\color{black} & \color{cyan} 1604/4862\color{black} \\
\hline
(3, 3, 2) &  &  &  &  & \color{cyan} 1334/1430\color{black} & \color{cyan} 3902/4862\color{black} \\
\hline
(4, 2, 2) &  &  &  &  & \color{cyan} 1276/1430\color{black} & \color{cyan} 3457/4862\color{black} \\
\hline
(4, 3, 1) &  &  &  &  & \color{cyan} 956/1430\color{black} & \color{cyan} 2131/4862\color{black} \\
\hline
(5, 2, 1) &  &  &  &  & \color{cyan} 794/1430\color{black} & \color{cyan} 1634/4862\color{black} \\
\hline
(6, 1, 1) &  &  &  &  & \color{cyan} 895/1430\color{black} & \color{cyan} 2210/4862\color{black} \\
\hline
(3, 3, 3) &  &  &  &  &  & 4862/4862 \\
\hline
(4, 3, 2) &  &  &  &  &  & \color{cyan} 3948/4862\color{black} \\
\hline
(4, 4, 1) &  &  &  &  &  & \color{cyan} 4122/4862\color{black} \\
\hline
(5, 2, 2) &  &  &  &  &  & \color{cyan} 4160/4862\color{black} \\
\hline
(5, 3, 1) &  &  &  &  &  & \color{cyan} 2839/4862\color{black} \\
\hline
(6, 2, 1) &  &  &  &  &  & \color{cyan} 2556/4862\color{black} \\
\hline
(7, 1, 1) &  &  &  &  &  & \color{cyan} 2894/4862\color{black} \\
\hline
\end{tabular}}

\includegraphics[width=4.5cm]{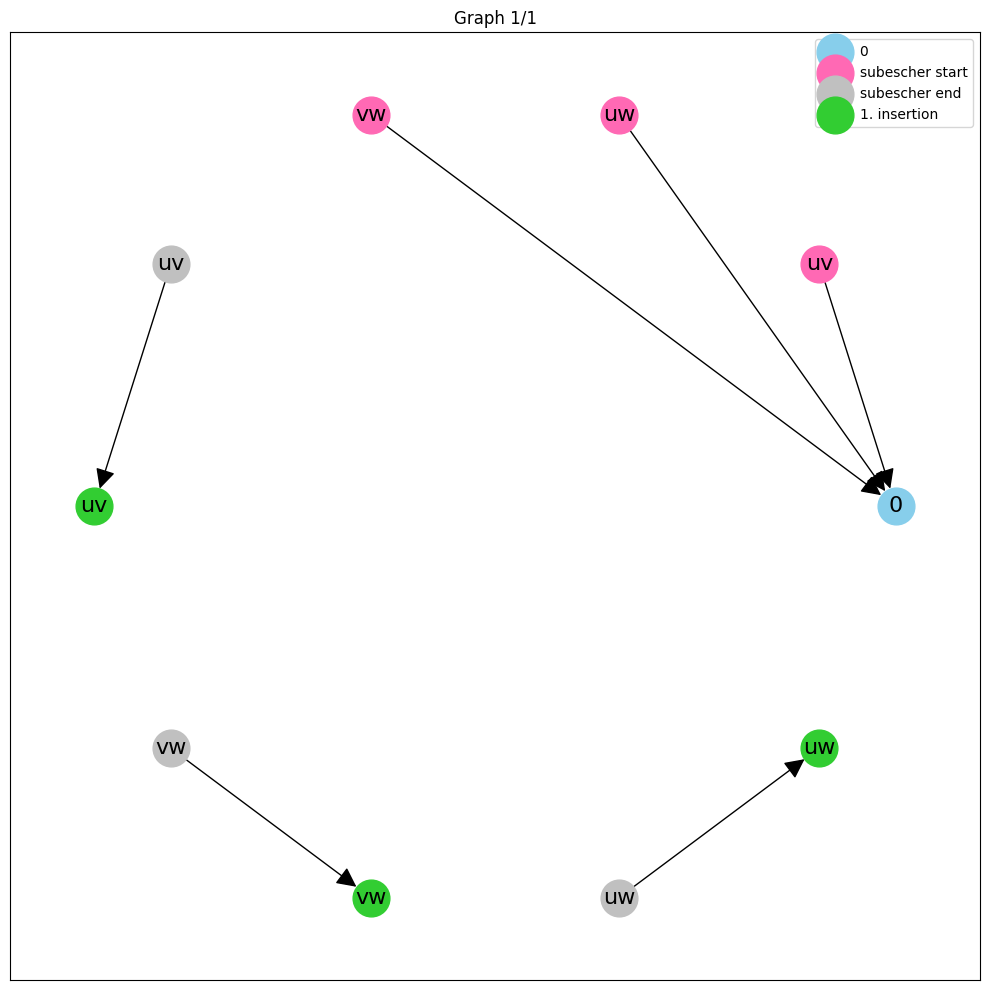}

\caption{Triple partitions with the model [0 $\ge$ SEStart(u,v)] AND [0 $\geq$ SEStart(u,w)] AND [0 $\geq$ SEStart(v,w)] AND [SEEnd(u,v)$<$FirstIns(u,v)] AND [SEEnd(v,w)$<$FirstIns(v,w)] AND [SEEnd(u,w)$<$FirstIns(u,w)], giving lower bound for the Stanley coefficients}
\label{tab:triple2}
\end{table}

\clearpage

\begin{table}[h!]
\centering
\resizebox{0.65\textwidth}{!}{
\begin{tabular}{|l||*{6}{c|}}
\hline
\multicolumn{7}{|c|}{Total absolute error/sum of real coefficients for all UIO} \\
\hline
\backslashbox{Partition}{UIO length}
&\makebox[3em]{4}&\makebox[3em]{5}&\makebox[3em]{6}
&\makebox[3em]{7}&\makebox[3em]{8}&\makebox[3em]{9}\\\hline\hline
(1, 1, 1, 1) & 0/24 & 0/384 & 0/3864 & 0/31224 & 0/221256 & 0/1435056 \\
\hline
(2, 1, 1, 1) &  & 0/66 & 0/1332 & 0/16158 & 0/152556 & 0/1234590 \\
\hline
(2, 2, 1, 1) &  &  & 0/180 & 0/4452 & 0/63976 & 0/698004 \\
\hline
(3, 1, 1, 1) &  &  & 0/456 & 0/10800 & 0/149676 & 0/1583436 \\
\hline
(2, 2, 2, 1) &  &  &  & 0/486 & 0/14316 & 0/239166 \\
\hline
(3, 2, 1, 1) &  &  &  & \color{orange} 28/1272\color{black} & \color{orange} 702/36148\color{black} & \color{orange} 10150/585036\color{black} \\
\hline
(4, 1, 1, 1) &  &  &  & 0/4608 & 0/124716 & 0/1936776 \\
\hline
(2, 2, 2, 2) &  &  &  &  & 0/1296 & 0/44352 \\
\hline
(3, 2, 2, 1) &  &  &  &  & \color{orange} 72/3476\color{black} & \color{orange} 2120/115596\color{black} \\
\hline
(3, 3, 1, 1) &  &  &  &  & \color{orange} 144/8648\color{black} & \color{orange} 4280/279956\color{black} \\
\hline
(4, 2, 1, 1) &  &  &  &  & \color{magenta} 422/13804\color{black} & \color{magenta} 11560/439024\color{black} \\
\hline
(5, 1, 1, 1) &  &  &  &  & 0/58728 & 0/1787232 \\
\hline
(3, 2, 2, 2) &  &  &  &  &  & 0/9306 \\
\hline
(3, 3, 2, 1) &  &  &  &  &  & \color{orange} 732/23864\color{black} \\
\hline
(4, 2, 2, 1) &  &  &  &  &  & \color{magenta} 1058/39650\color{black} \\
\hline
(4, 3, 1, 1) &  &  &  &  &  & \color{orange} 5028/92456\color{black} \\
\hline
(5, 2, 1, 1) &  &  &  &  &  & \color{magenta} 7176/193420\color{black} \\
\hline
(6, 1, 1, 1) &  &  &  &  &  & 0/886716 \\
\hline
\end{tabular}}

\resizebox{0.65\textwidth}{!}{
\begin{tabular}{|l||*{6}{c|}}
\hline
\multicolumn{7}{|c|}{Number of correctly predicted/number of all UIOs} \\
\hline
\backslashbox{Partition}{UIO length}
&\makebox[3em]{4}&\makebox[3em]{5}&\makebox[3em]{6}
&\makebox[3em]{7}&\makebox[3em]{8}&\makebox[3em]{9}\\\hline\hline
(1, 1, 1, 1) & 14/14 & 42/42 & 132/132 & 429/429 & 1430/1430 & 4862/4862 \\
\hline
(2, 1, 1, 1) &  & 42/42 & 132/132 & 429/429 & 1430/1430 & 4862/4862 \\
\hline
(2, 2, 1, 1) &  &  & 132/132 & 429/429 & 1430/1430 & 4862/4862 \\
\hline
(3, 1, 1, 1) &  &  & 132/132 & 429/429 & 1430/1430 & 4862/4862 \\
\hline
(2, 2, 2, 1) &  &  &  & 429/429 & 1430/1430 & 4862/4862 \\
\hline
(3, 2, 1, 1) &  &  &  & \color{orange} 420/429\color{black} & \color{orange} 1306/1430\color{black} & \color{orange} 3920/4862\color{black} \\
\hline
(4, 1, 1, 1) &  &  &  & 429/429 & 1430/1430 & 4862/4862 \\
\hline
(2, 2, 2, 2) &  &  &  &  & 1430/1430 & 4862/4862 \\
\hline
(3, 2, 2, 1) &  &  &  &  & \color{orange} 1415/1430\color{black} & \color{orange} 4620/4862\color{black} \\
\hline
(3, 3, 1, 1) &  &  &  &  & \color{orange} 1414/1430\color{black} & \color{orange} 4610/4862\color{black} \\
\hline
(4, 2, 1, 1) &  &  &  &  & \color{magenta} 1347/1430\color{black} & \color{magenta} 4054/4862\color{black} \\
\hline
(5, 1, 1, 1) &  &  &  &  & 1430/1430 & 4862/4862 \\
\hline
(3, 2, 2, 2) &  &  &  &  &  & 4862/4862 \\
\hline
(3, 3, 2, 1) &  &  &  &  &  & \color{orange} 4762/4862\color{black} \\
\hline
(4, 2, 2, 1) &  &  &  &  &  & \color{magenta} 4730/4862\color{black} \\
\hline
(4, 3, 1, 1) &  &  &  &  &  & \color{orange} 4388/4862\color{black} \\
\hline
(5, 2, 1, 1) &  &  &  &  &  & \color{magenta} 4290/4862\color{black} \\
\hline
(6, 1, 1, 1) &  &  &  &  &  & 4862/4862 \\
\hline
\end{tabular}}

\includegraphics[width=5cm]{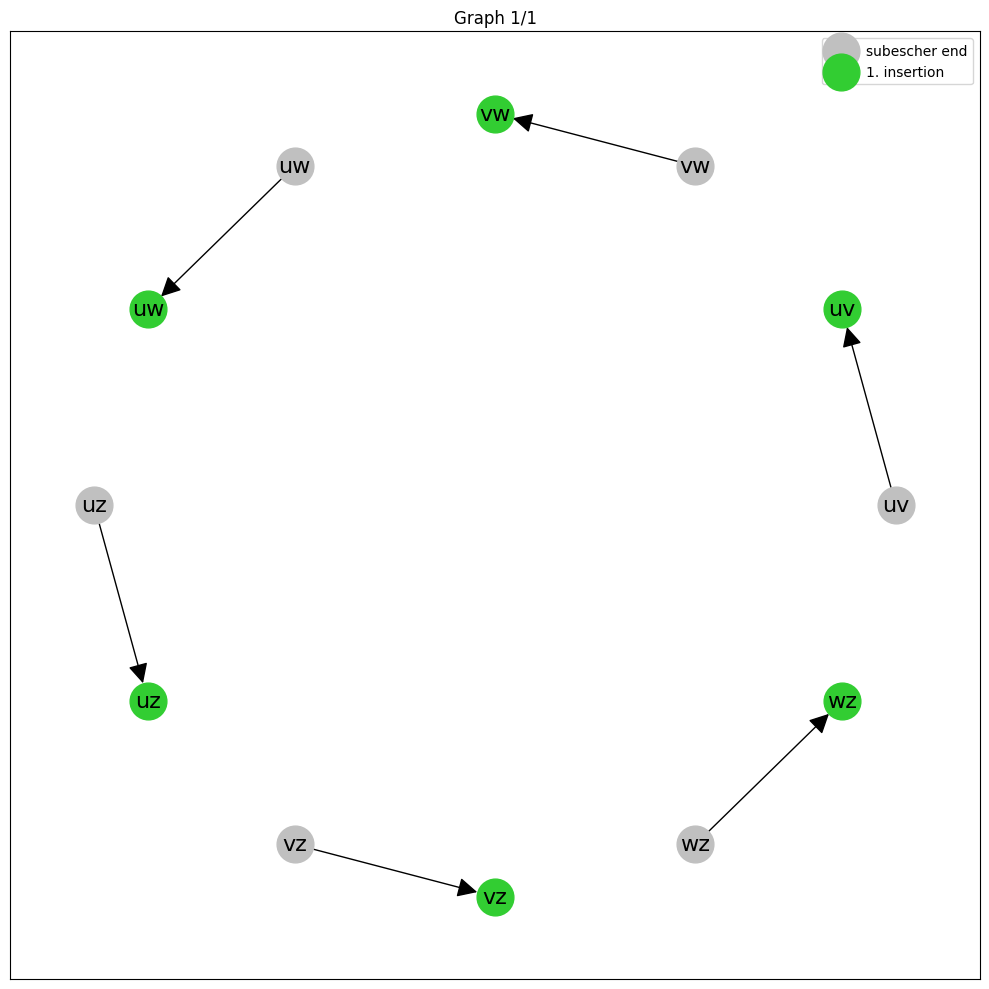}

\caption{Quadruple partition with the canonical model}
\label{tab:quadruple1}
\end{table}

\clearpage

\begin{table}[h!]
\centering
\resizebox{0.65\textwidth}{!}{
\begin{tabular}{|l||*{6}{c|}}
\hline
\multicolumn{7}{|c|}{Total absolute error/sum of real coefficients for all UIO} \\
\hline
\backslashbox{Partition}{UIO length}
&\makebox[3em]{4}&\makebox[3em]{5}&\makebox[3em]{6}
&\makebox[3em]{7}&\makebox[3em]{8}&\makebox[3em]{9}\\\hline\hline
(1, 1, 1, 1) & 0/24 & 0/384 & 0/3864 & 0/31224 & 0/221256 & 0/1435056 \\
\hline
(2, 1, 1, 1) &  & 0/66 & 0/1332 & 0/16158 & 0/152556 & 0/1234590 \\
\hline
(2, 2, 1, 1) &  &  & \color{cyan} 32/180\color{black} & \color{cyan} 768/4452\color{black} & \color{cyan} 10716/63976\color{black} & \color{cyan} 113612/698004\color{black} \\
\hline
(3, 1, 1, 1) &  &  & 0/456 & 0/10800 & 0/149676 & 0/1583436 \\
\hline
(2, 2, 2, 1) &  &  &  & \color{cyan} 84/486\color{black} & \color{cyan} 2424/14316\color{black} & \color{cyan} 39604/239166\color{black} \\
\hline
(3, 2, 1, 1) &  &  &  & \color{cyan} 224/1272\color{black} & \color{cyan} 6196/36148\color{black} & \color{cyan} 97608/585036\color{black} \\
\hline
(4, 1, 1, 1) &  &  &  & 0/4608 & 0/124716 & 0/1936776 \\
\hline
(2, 2, 2, 2) &  &  &  &  & 0/1296 & 0/44352 \\
\hline
(3, 2, 2, 1) &  &  &  &  & \color{cyan} 600/3476\color{black} & \color{cyan} 19534/115596\color{black} \\
\hline
(3, 3, 1, 1) &  &  &  &  & \color{cyan} 3086/8648\color{black} & \color{cyan} 97728/279956\color{black} \\
\hline
(4, 2, 1, 1) &  &  &  &  & \color{cyan} 2480/13804\color{black} & \color{cyan} 76656/439024\color{black} \\
\hline
(5, 1, 1, 1) &  &  &  &  & 0/58728 & 0/1787232 \\
\hline
(3, 2, 2, 2) &  &  &  &  &  & 0/9306 \\
\hline
(3, 3, 2, 1) &  &  &  &  &  & \color{cyan} 11384/23864\color{black} \\
\hline
(4, 2, 2, 1) &  &  &  &  &  & \color{cyan} 7074/39650\color{black} \\
\hline
(4, 3, 1, 1) &  &  &  &  &  & \color{cyan} 32014/92456\color{black} \\
\hline
(5, 2, 1, 1) &  &  &  &  &  & \color{cyan} 35868/193420\color{black} \\
\hline
(6, 1, 1, 1) &  &  &  &  &  & 0/886716 \\
\hline
\end{tabular}}

\resizebox{0.65\textwidth}{!}{
\begin{tabular}{|l||*{6}{c|}}
\hline
\multicolumn{7}{|c|}{Number of correctly predicted/number of all UIOs} \\
\hline
\backslashbox{Partition}{UIO length}
&\makebox[3em]{4}&\makebox[3em]{5}&\makebox[3em]{6}
&\makebox[3em]{7}&\makebox[3em]{8}&\makebox[3em]{9}\\\hline\hline
(1, 1, 1, 1) & 14/14 & 42/42 & 132/132 & 429/429 & 1430/1430 & 4862/4862 \\
\hline
(2, 1, 1, 1) &  & 42/42 & 132/132 & 429/429 & 1430/1430 & 4862/4862 \\
\hline
(2, 2, 1, 1) &  &  & \color{cyan} 123/132\color{black} & \color{cyan} 338/429\color{black} & \color{cyan} 882/1430\color{black} & \color{cyan} 2232/4862\color{black} \\
\hline
(3, 1, 1, 1) &  &  & 132/132 & 429/429 & 1430/1430 & 4862/4862 \\
\hline
(2, 2, 2, 1) &  &  &  & \color{cyan} 421/429\color{black} & \color{cyan} 1314/1430\color{black} & \color{cyan} 3934/4862\color{black} \\
\hline
(3, 2, 1, 1) &  &  &  & \color{cyan} 393/429\color{black} & \color{cyan} 1100/1430\color{black} & \color{cyan} 2927/4862\color{black} \\
\hline
(4, 1, 1, 1) &  &  &  & 429/429 & 1430/1430 & 4862/4862 \\
\hline
(2, 2, 2, 2) &  &  &  &  & 1430/1430 & 4862/4862 \\
\hline
(3, 2, 2, 1) &  &  &  &  & \color{cyan} 1371/1430\color{black} & \color{cyan} 4172/4862\color{black} \\
\hline
(3, 3, 1, 1) &  &  &  &  & \color{cyan} 1282/1430\color{black} & \color{cyan} 3497/4862\color{black} \\
\hline
(4, 2, 1, 1) &  &  &  &  & \color{cyan} 1238/1430\color{black} & \color{cyan} 3308/4862\color{black} \\
\hline
(5, 1, 1, 1) &  &  &  &  & 1430/1430 & 4862/4862 \\
\hline
(3, 2, 2, 2) &  &  &  &  &  & 4862/4862 \\
\hline
(3, 3, 2, 1) &  &  &  &  &  & \color{cyan} 4287/4862\color{black} \\
\hline
(4, 2, 2, 1) &  &  &  &  &  & \color{cyan} 4531/4862\color{black} \\
\hline
(4, 3, 1, 1) &  &  &  &  &  & \color{cyan} 4150/4862\color{black} \\
\hline
(5, 2, 1, 1) &  &  &  &  &  & \color{cyan} 3955/4862\color{black} \\
\hline
(6, 1, 1, 1) &  &  &  &  &  & 4862/4862 \\
\hline
\end{tabular}}

\includegraphics[width=5cm]{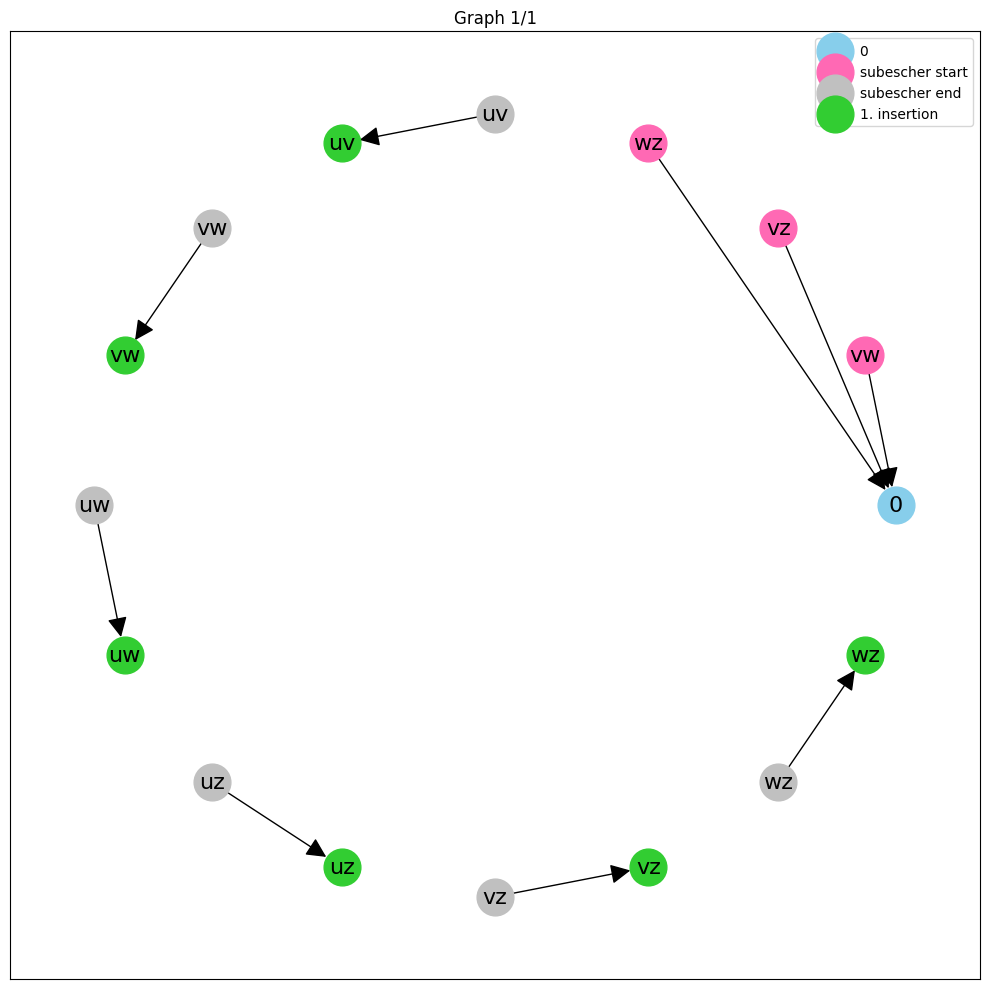}

\caption{Quadruple partitions with the modified canonincal model, giving lower bound for the Stanley coefficients. Also the best conditions when allowing up to 6 additional conditions while only using the uv,uw,uz,vw,vz,wz subescher for the [0 $\geq$ subescher start] conditions}
\label{tab:quadruple2}
\end{table}

\clearpage

\begin{table}[h!]
\centering
\resizebox{0.65\textwidth}{!}{
\begin{tabular}{|l||*{6}{c|}}
\hline
\multicolumn{7}{|c|}{Total absolute error/sum of real coefficients for all UIO} \\
\hline
\backslashbox{Partition}{UIO length}
&\makebox[3em]{4}&\makebox[3em]{5}&\makebox[3em]{6}
&\makebox[3em]{7}&\makebox[3em]{8}&\makebox[3em]{9}\\\hline\hline
(2, 1, 1) & 0/16 & 0/256 & 0/2552 & 0/20304 & 0/141088 & 0/895072 \\
\hline
(2, 2, 1) &  & 0/42 & 0/860 & 0/10454 & 0/98088 & 0/784470 \\
\hline
(3, 1, 1) &  & 0/106 & 0/2052 & 0/23846 & 0/215620 & 0/1671830 \\
\hline
(2, 2, 2) &  &  & 0/108 & 0/2712 & 0/39216 & 0/427572 \\
\hline
(3, 2, 1) &  &  & \color{orange} 4/286\color{black} & \color{orange} 80/6872\color{black} & \color{orange} 948/95536\color{black} & \color{orange} 8668/1006152\color{black} \\
\hline
(4, 1, 1) &  &  & 0/1024 & 0/23148 & 0/306112 & 0/3091580 \\
\hline
(3, 2, 2) &  &  &  & 0/742 & 0/21372 & 0/347850 \\
\hline
(3, 3, 1) &  &  &  & \color{orange} 16/1824\color{black} & \color{orange} 400/50940\color{black} & \color{orange} 5736/805620\color{black} \\
\hline
(4, 2, 1) &  &  &  & \color{magenta} 48/3034\color{black} & \color{magenta} 963/82612\color{black} & \color{magenta} 11736/1279950\color{black} \\
\hline
(5, 1, 1) &  &  &  & 0/12472 & 0/322284 & 0/4778064 \\
\hline
(3, 3, 2) &  &  &  &  & \color{orange} 90/4776\color{black} & \color{orange} 2616/156156\color{black} \\
\hline
(4, 2, 2) &  &  &  &  & 0/8344 & 0/267792 \\
\hline
(4, 3, 1) &  &  &  &  & \color{magenta} 644/18758\color{black} & \color{magenta} 17877/587564\color{black} \\
\hline
(5, 2, 1) &  &  &  &  & \color{magenta} 746/41706\color{black} & \color{magenta} 15840/1265520\color{black} \\
\hline
(6, 1, 1) &  &  &  &  & 0/180144 & 0/5237856 \\
\hline
(3, 3, 3) &  &  &  &  &  & 0/28620 \\
\hline
(4, 3, 2) &  &  &  &  &  & \color{magenta} 1380/51836\color{black} \\
\hline
(4, 4, 1) &  &  &  &  &  & \color{orange} 3236/177434\color{black} \\
\hline
(5, 2, 2) &  &  &  &  &  & 0/124470 \\
\hline
(5, 3, 1) &  &  &  &  &  & \color{magenta} 9725/265838\color{black} \\
\hline
(6, 2, 1) &  &  &  &  &  & \color{magenta} 13066/681792\color{black} \\
\hline
(7, 1, 1) &  &  &  &  &  & 0/2987556 \\
\hline
\end{tabular}}

\resizebox{0.65\textwidth}{!}{
\begin{tabular}{|l||*{6}{c|}}
\hline
\multicolumn{7}{|c|}{Number of correctly predicted/number of all UIOs} \\
\hline
\backslashbox{Partition}{UIO length}
&\makebox[3em]{4}&\makebox[3em]{5}&\makebox[3em]{6}
&\makebox[3em]{7}&\makebox[3em]{8}&\makebox[3em]{9}\\\hline\hline
(2, 1, 1) & 14/14 & 42/42 & 132/132 & 429/429 & 1430/1430 & 4862/4862 \\
\hline
(2, 2, 1) &  & 42/42 & 132/132 & 429/429 & 1430/1430 & 4862/4862 \\
\hline
(3, 1, 1) &  & 42/42 & 132/132 & 429/429 & 1430/1430 & 4862/4862 \\
\hline
(2, 2, 2) &  &  & 132/132 & 429/429 & 1430/1430 & 4862/4862 \\
\hline
(3, 2, 1) &  &  & \color{orange} 130/132\color{black} & \color{orange} 401/429\color{black} & \color{orange} 1209/1430\color{black} & \color{orange} 3566/4862\color{black} \\
\hline
(4, 1, 1) &  &  & 132/132 & 429/429 & 1430/1430 & 4862/4862 \\
\hline
(3, 2, 2) &  &  &  & 429/429 & 1430/1430 & 4862/4862 \\
\hline
(3, 3, 1) &  &  &  & \color{orange} 425/429\color{black} & \color{orange} 1369/1430\color{black} & \color{orange} 4344/4862\color{black} \\
\hline
(4, 2, 1) &  &  &  & \color{magenta} 411/429\color{black} & \color{magenta} 1248/1430\color{black} & \color{magenta} 3679/4862\color{black} \\
\hline
(5, 1, 1) &  &  &  & 429/429 & 1430/1430 & 4862/4862 \\
\hline
(3, 3, 2) &  &  &  &  & \color{orange} 1406/1430\color{black} & \color{orange} 4526/4862\color{black} \\
\hline
(4, 2, 2) &  &  &  &  & 1430/1430 & 4862/4862 \\
\hline
(4, 3, 1) &  &  &  &  & \color{magenta} 1317/1430\color{black} & \color{magenta} 3886/4862\color{black} \\
\hline
(5, 2, 1) &  &  &  &  & \color{magenta} 1306/1430\color{black} & \color{magenta} 3848/4862\color{black} \\
\hline
(6, 1, 1) &  &  &  &  & 1430/1430 & 4862/4862 \\
\hline
(3, 3, 3) &  &  &  &  &  & 4862/4862 \\
\hline
(4, 3, 2) &  &  &  &  &  & \color{magenta} 4588/4862\color{black} \\
\hline
(4, 4, 1) &  &  &  &  &  & \color{orange} 4625/4862\color{black} \\
\hline
(5, 2, 2) &  &  &  &  &  & 4862/4862 \\
\hline
(5, 3, 1) &  &  &  &  &  & \color{magenta} 4193/4862\color{black} \\
\hline
(6, 2, 1) &  &  &  &  &  & \color{magenta} 4164/4862\color{black} \\
\hline
(7, 1, 1) &  &  &  &  &  & 4862/4862 \\
\hline
\end{tabular}}

\includegraphics[width=4.5cm]{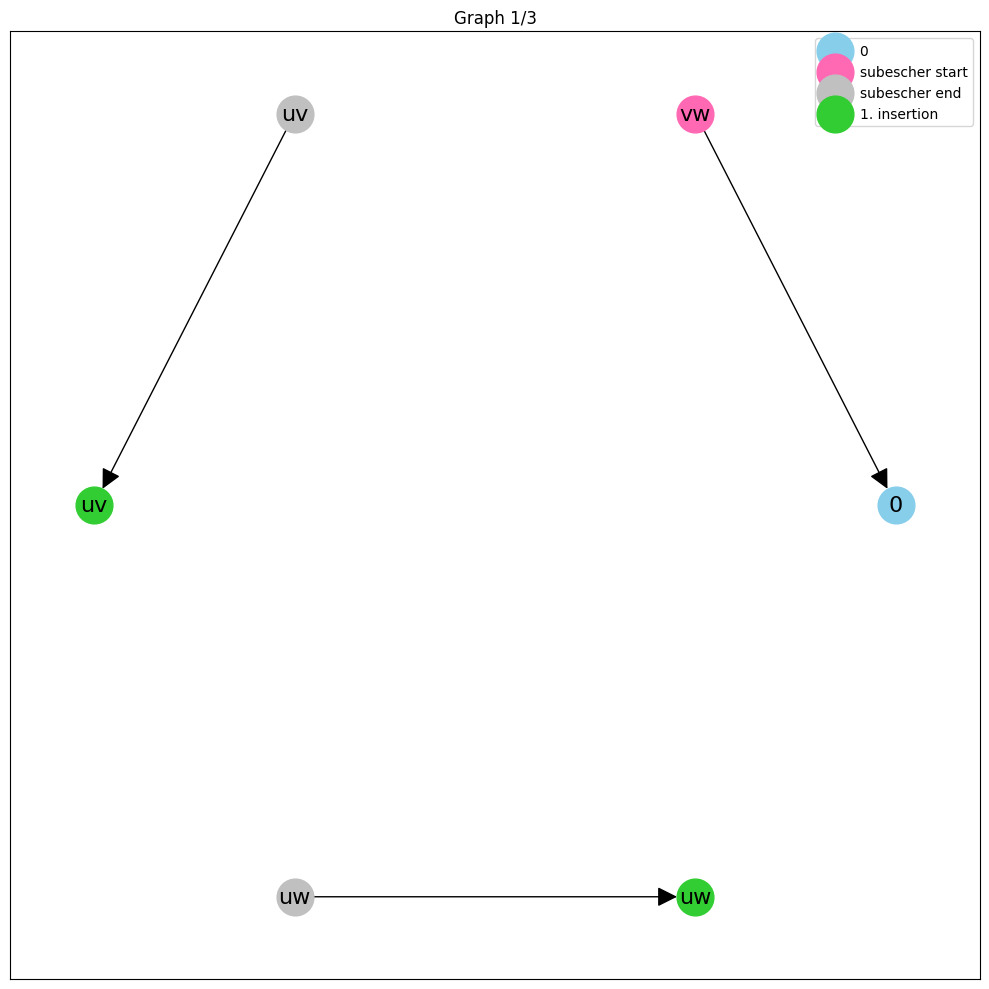}
\includegraphics[width=4.5cm]{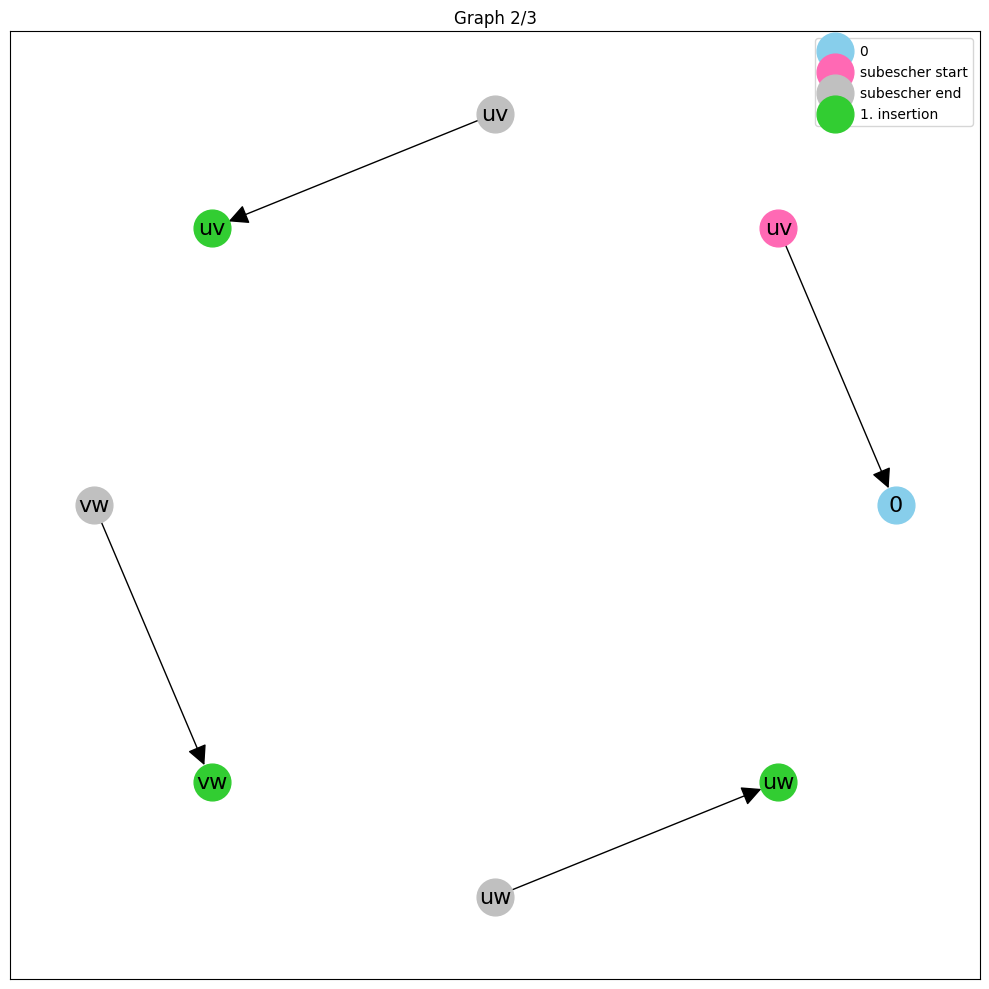}
\includegraphics[width=4.5cm]{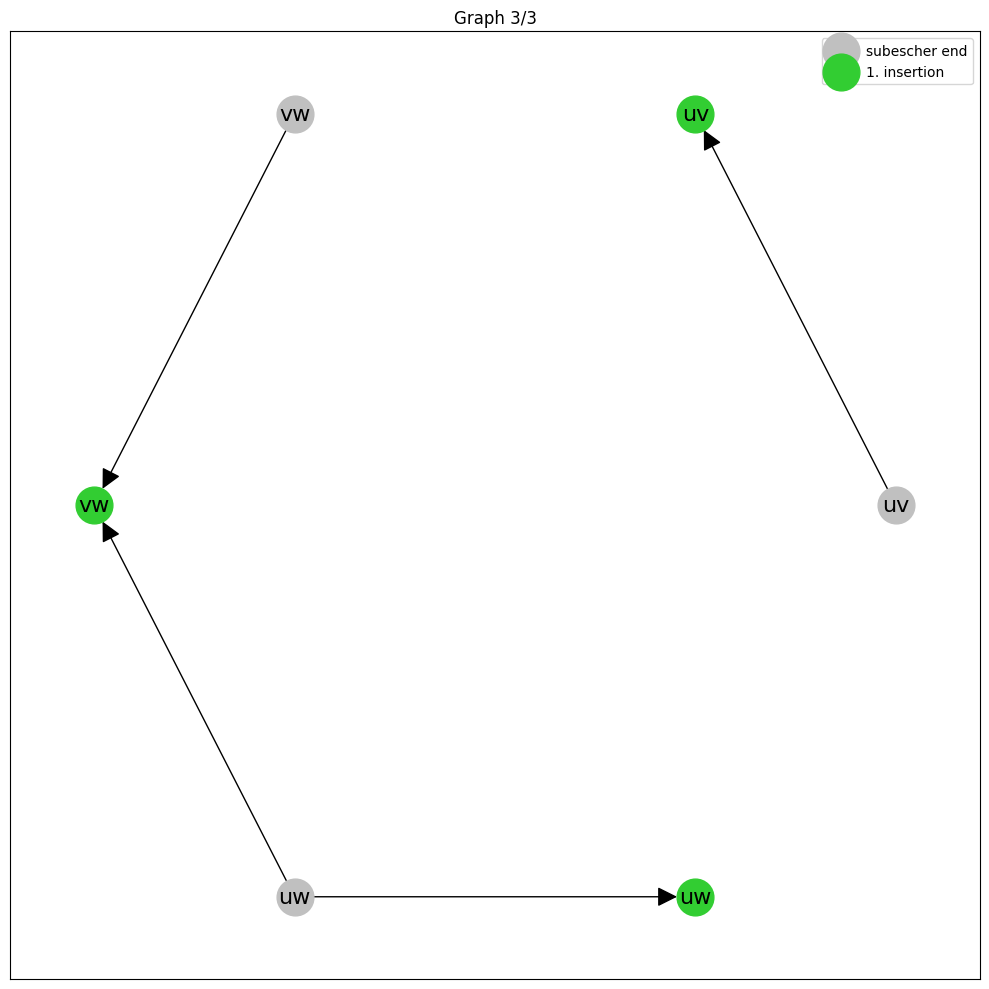}

\caption{A model with 3 condition graphs meaning the Boolean expression [graph1 OR graph2 OR graph3]. It is trained on $(5,2,1)$ partition with 9 intervals, and shows that the RL agent often finds better conditions for individual partitions.}
\label{tab:triple3rows}
\end{table}

\clearpage

\begin{table}[h!]
\centering
\resizebox{0.65\textwidth}{!}{
\begin{tabular}{|l||*{6}{c|}}
\hline
\multicolumn{7}{|c|}{Total absolute error/sum of real coefficients for all UIO} \\
\hline
\backslashbox{Partition}{UIO length}
&\makebox[3em]{4}&\makebox[3em]{5}&\makebox[3em]{6}
&\makebox[3em]{7}&\makebox[3em]{8}&\makebox[3em]{9}\\\hline\hline
(1, 1, 1, 1) & 0/24 & 0/384 & 0/3864 & 0/31224 & 0/221256 & 0/1435056 \\
\hline
(2, 1, 1, 1) &  & 0/66 & 0/1332 & 0/16158 & 0/152556 & 0/1234590 \\
\hline
(2, 2, 1, 1) &  &  & \color{cyan} 32/180\color{black} & \color{cyan} 768/4452\color{black} & \color{cyan} 10716/63976\color{black} & \color{cyan} 113612/698004\color{black} \\
\hline
(3, 1, 1, 1) &  &  & 0/456 & 0/10800 & 0/149676 & 0/1583436 \\
\hline
(2, 2, 2, 1) &  &  &  & \color{cyan} 84/486\color{black} & \color{cyan} 2424/14316\color{black} & \color{cyan} 39604/239166\color{black} \\
\hline
(3, 2, 1, 1) &  &  &  & \color{cyan} 224/1272\color{black} & \color{cyan} 6196/36148\color{black} & \color{cyan} 97608/585036\color{black} \\
\hline
(4, 1, 1, 1) &  &  &  & 0/4608 & 0/124716 & 0/1936776 \\
\hline
(2, 2, 2, 2) &  &  &  &  & 0/1296 & 0/44352 \\
\hline
(3, 2, 2, 1) &  &  &  &  & \color{cyan} 600/3476\color{black} & \color{cyan} 19534/115596\color{black} \\
\hline
(3, 3, 1, 1) &  &  &  &  & \color{cyan} 3086/8648\color{black} & \color{cyan} 97728/279956\color{black} \\
\hline
(4, 2, 1, 1) &  &  &  &  & \color{cyan} 2480/13804\color{black} & \color{cyan} 76656/439024\color{black} \\
\hline
(5, 1, 1, 1) &  &  &  &  & 0/58728 & 0/1787232 \\
\hline
(3, 2, 2, 2) &  &  &  &  &  & 0/9306 \\
\hline
(3, 3, 2, 1) &  &  &  &  &  & \color{cyan} 11384/23864\color{black} \\
\hline
(4, 2, 2, 1) &  &  &  &  &  & \color{cyan} 7074/39650\color{black} \\
\hline
(4, 3, 1, 1) &  &  &  &  &  & \color{cyan} 32014/92456\color{black} \\
\hline
(5, 2, 1, 1) &  &  &  &  &  & \color{cyan} 35868/193420\color{black} \\
\hline
(6, 1, 1, 1) &  &  &  &  &  & 0/886716 \\
\hline
\end{tabular}}

\resizebox{0.65\textwidth}{!}{
\begin{tabular}{|l||*{6}{c|}}
\hline
\multicolumn{7}{|c|}{Number of correctly predicted/number of all UIOs} \\
\hline
\backslashbox{Partition}{UIO length}
&\makebox[3em]{4}&\makebox[3em]{5}&\makebox[3em]{6}
&\makebox[3em]{7}&\makebox[3em]{8}&\makebox[3em]{9}\\\hline\hline
(1, 1, 1, 1) & 14/14 & 42/42 & 132/132 & 429/429 & 1430/1430 & 4862/4862 \\
\hline
(2, 1, 1, 1) &  & 42/42 & 132/132 & 429/429 & 1430/1430 & 4862/4862 \\
\hline
(2, 2, 1, 1) &  &  & \color{cyan} 123/132\color{black} & \color{cyan} 338/429\color{black} & \color{cyan} 882/1430\color{black} & \color{cyan} 2232/4862\color{black} \\
\hline
(3, 1, 1, 1) &  &  & 132/132 & 429/429 & 1430/1430 & 4862/4862 \\
\hline
(2, 2, 2, 1) &  &  &  & \color{cyan} 421/429\color{black} & \color{cyan} 1314/1430\color{black} & \color{cyan} 3934/4862\color{black} \\
\hline
(3, 2, 1, 1) &  &  &  & \color{cyan} 393/429\color{black} & \color{cyan} 1100/1430\color{black} & \color{cyan} 2927/4862\color{black} \\
\hline
(4, 1, 1, 1) &  &  &  & 429/429 & 1430/1430 & 4862/4862 \\
\hline
(2, 2, 2, 2) &  &  &  &  & 1430/1430 & 4862/4862 \\
\hline
(3, 2, 2, 1) &  &  &  &  & \color{cyan} 1371/1430\color{black} & \color{cyan} 4172/4862\color{black} \\
\hline
(3, 3, 1, 1) &  &  &  &  & \color{cyan} 1282/1430\color{black} & \color{cyan} 3497/4862\color{black} \\
\hline
(4, 2, 1, 1) &  &  &  &  & \color{cyan} 1238/1430\color{black} & \color{cyan} 3308/4862\color{black} \\
\hline
(5, 1, 1, 1) &  &  &  &  & 1430/1430 & 4862/4862 \\
\hline
(3, 2, 2, 2) &  &  &  &  &  & 4862/4862 \\
\hline
(3, 3, 2, 1) &  &  &  &  &  & \color{cyan} 4287/4862\color{black} \\
\hline
(4, 2, 2, 1) &  &  &  &  &  & \color{cyan} 4531/4862\color{black} \\
\hline
(4, 3, 1, 1) &  &  &  &  &  & \color{cyan} 4150/4862\color{black} \\
\hline
(5, 2, 1, 1) &  &  &  &  &  & \color{cyan} 3955/4862\color{black} \\
\hline
(6, 1, 1, 1) &  &  &  &  &  & 4862/4862 \\
\hline
\end{tabular}}

\includegraphics[width=5cm]{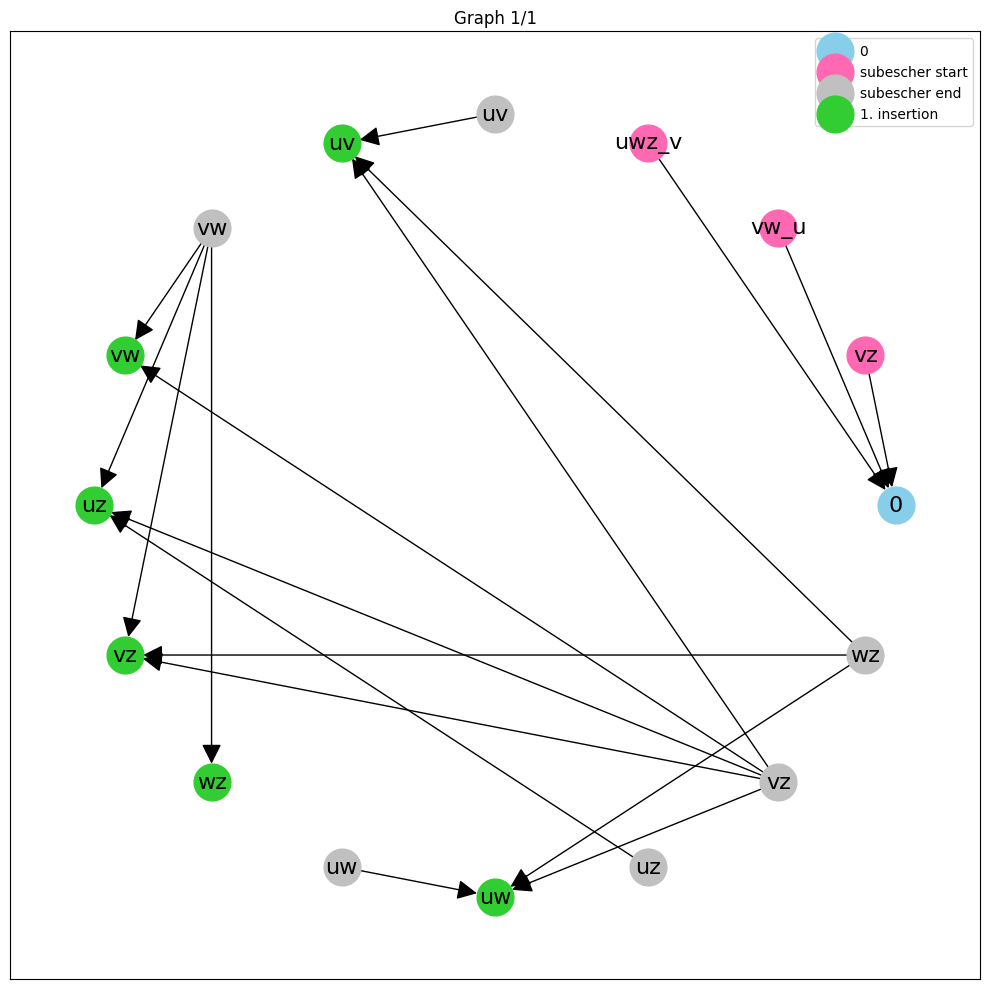}

\caption{A model found by the RL agent for quadruple partitions with only negative errors, and with better performance than the model of Theorem \ref{mainthm3}. This shows that graphs which give lower bound for the Stanley coefficients are not unique, but finding one with general pattern is harder.}
\label{tab:quadruplecrazy}
\end{table}

\clearpage

\section{Analysis of the Machine Learning Method}

The machine learning approach we employed to tackle this problem presented several notable challenges. Below, we outline key observations and strategies that emerged during the process, along with the implications for optimizing the training process.

\subsection{Non-Uniqueness of Solutions and Balancing Complexity}

For most partition types, there is no single best solution. Even when a perfect solution (i.e., a solution that achieves a zero-error score) is found, the corresponding condition graph is often not unique. Table \ref{tab:triple1} and \ref{tab:triple3rows} illustrate this for the case of the partition $(a, b, b)$. This observation indicates the need to balance between obtaining optimal solutions with potentially highly complex condition graphs and accepting slightly suboptimal solutions that correspond to significantly simpler, more interpretable, and potentially more universal condition graphs. This trade-off between accuracy and simplicity is critical to the applicability and generalization of our model.

\subsection{Managing the Comparison Space}

As the size of the partition grows, the comparison space becomes exponentially large, leading to impractical memory requirements during training. For example, training the model for partitions of length greater than five quickly exhausts available resources. To mitigate this, we must reduce the comparison space, which necessitates leveraging mathematical intuition. By identifying symmetries and reducing redundant comparisons, we can make the search space more tractable while still preserving solution quality.

\subsection{Training Pitfalls in Large Search Spaces}

One of the common pitfalls in training with a large comparison space is the lack of satisfying solutions for random Boolean expressions. In many cases, none of the Escher tuples meet the given conditions after the initial random iteration, causing the model to become trapped in a suboptimal region where the best achievable score is merely the sum of the nonzero Stanley coefficients across all UIOs. To avoid this, we must guide the training process to prevent such stagnation, ensuring that the model does not get trapped in these non-productive domains.

\subsection{Reducing Cores and Comparisons}

An essential aspect of optimizing the model involves reducing the number of cores and comparisons. For example, in the case of $n=3$, the optimal solution only utilizes specific cores (e.g., $SEEnd$ and $FirstIns$), while others, such as $0$ and $SEStart$, are not used. This insight suggests that for $n=4$, we can eliminate the $SEStart$ cores and restrict comparisons to pairs of cores such as $(0, SEStart)$ and $(SEEnd, FirstIns)$. This reduction in action space helps prevent the model from being stuck in local minima during training, significantly improving efficiency.

\subsection{Core Selection and Encoding Decision Trees}

The process of selecting the appropriate cores is critical to model performance. Encoding decision trees with graphs is a complex task, and it took several months to identify the correct core vectors. Specifically, interpreting the meaning of expressions like $\mathbf{SEEnd}(u,v) < \mathbf{FirstIns}(u,v)$ requires a deep understanding of multiple aspects of the graph's structure. These core vectors play a crucial role in ensuring that the model captures the right relationships between the nodes and edges of the decision tree.

\subsection{Condition Graph Row Selection}

Another critical factor in the model's design is the number of rows used in the condition graph. In theory, adding a second row expands the search space to include solutions that require only one row. However, in practice, increasing the number of rows dramatically increases the state space, which, in turn, heightens the risk of the model becoming stuck in good-but-not-optimal solutions. Through experimentation, we found that solutions using 1-3 rows often outperformed those with more rows, even though a larger state space theoretically allows for more complex solutions.

\subsection{Importance of the Score Function}

The choice of score function is pivotal in guiding the reinforcement learning (RL) agent toward optimal solutions. In particular, the role of the edge penalty is crucial. We observed that when $\mathrm{EdgePenalty} > 1$, the RL agent tends to prioritize shorter solutions, i.e., smaller condition graphs, at the expense of better absolute error. Through extensive experimentation, we determined that the optimal range for EdgePenalty lies between 0.1 and 0.5. This range strikes a balance between solution length and accuracy, leading to more efficient exploration of the solution space.

\subsection{Training charts}

We added a ResultViewer folder to the GitHub repository, where the reinforcement training process is monitored and saved as charts. An example is shown in Figure \ref{fig:charts}. 

\clearpage

\begin{figure}
    \centering
    \includegraphics[width=0.4\linewidth]{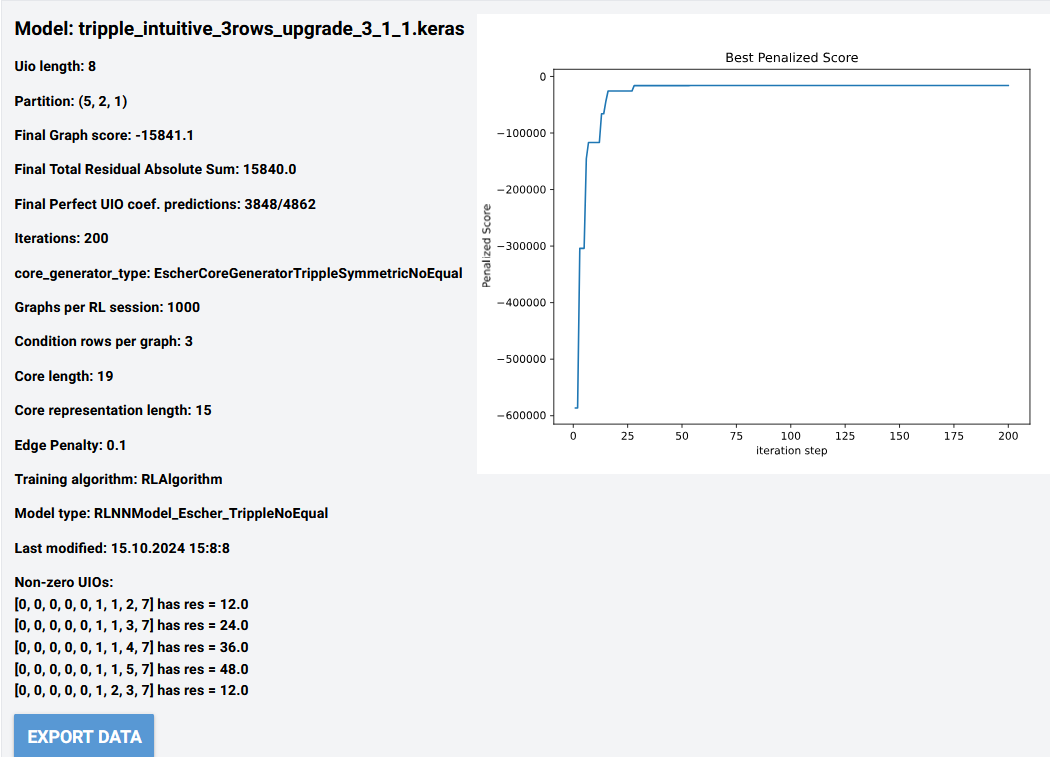}
    \includegraphics[width=0.4\linewidth]{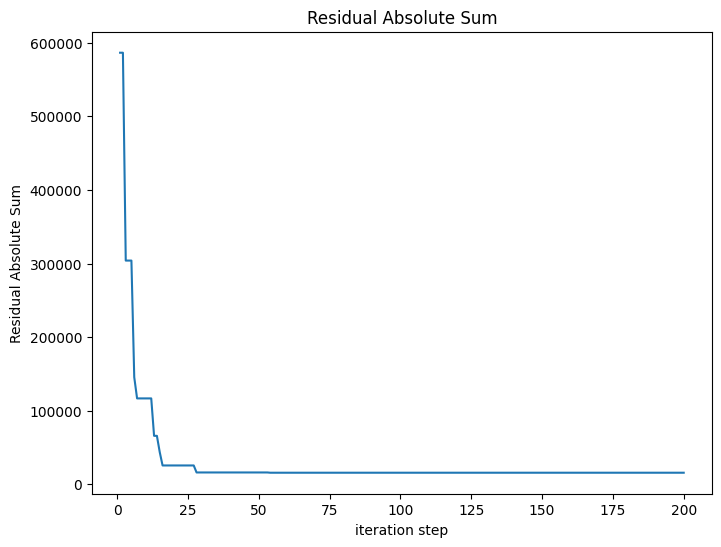}
    \includegraphics[width=0.4\linewidth]{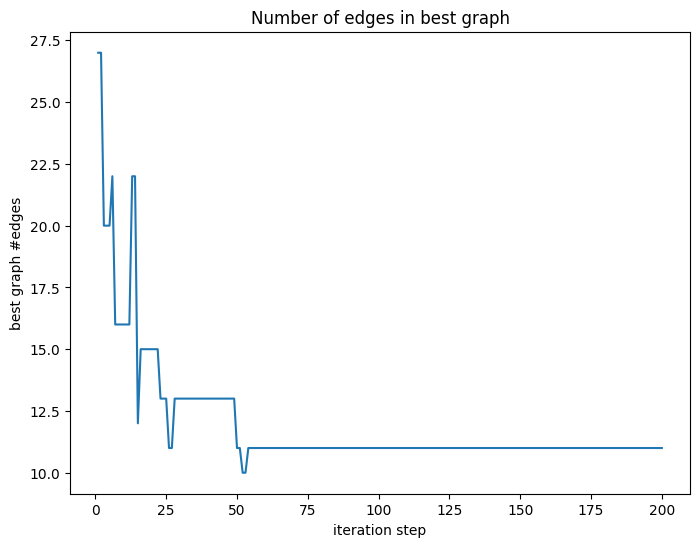}
    \caption{Training charts for the partition $(5,2,1)$ trained on all UIOs of length $8$, with 3 rows in the condition graph. The number of correctly predicted coefficients is $3848/4862$. Because EdgePenalty<1, the model occasionally adds edges to the condition graph to achieve a lower total score, resulting in jumps in the number of edges.}
    \label{fig:charts}
\end{figure}

\section{Using different ML models}

A promising machine learning approach to determining Stanley coefficients is to apply a supervised learning model. This idea is attractive for several reasons:
\begin{enumerate}
    \item The input data consists of a sequence of \(n\) unit intervals on a line, and the corresponding unit interval graph can be encoded as an increasing sequence of $n$ integers between $0$ and $n-1$, also called the area sequence.
    \item The output is a series of integer coefficients. With the right model, saliency analysis can be performed to identify which features of the Unit Interval Orders (UIOs) influence a particular coefficient. 
\end{enumerate}

We experimented with various feed-forward neural networks as well as small BERT-type GPT models, utilizing encoder transformers with full attention. This method has two significant limitations: (1) The training set is limited. Computing Stanley coefficients for UIOs is computationally expensive, requiring substantial memory, and we are constrained to sequences of length around 10. (2) While feature analysis is possible, it doesn't provide a true explanation for the coefficients. 
However, recent advances in the mathematical interpretability of transformer models have prompted us to explore this avenue in our ongoing research.

\bibliographystyle{abbrv}
\bibliography{BercziKluver.bib}

\end{document}